\documentclass{article}[12pt]
\usepackage{amsmath,amssymb,amsthm,txfonts}
 \usepackage{color}


\title{Buchberger-Weispfenning Theory\\ for Effective Associative Rings}

\author{
{\small\bf Michela Ceria}\\
{\small Dipartimento di Ingegneria e scienza dell'informazione}\\
{\small Universit\`{a} di Trento}\\
{\small{\tt michela.ceria@unitn.it}}
\and
\and
{\small\bf Teo Mora}\\
{\small DIMA}\\
{\small Universit\`a di Genova}\\
{\small{\tt theomora@disi.unige.it}}
}



\def\Forall{\mbox{\ for each }}
\def\Where{\mbox{\ where }}

\def\And{\mbox{\ and }}

\def\Or{\mbox{\ or }}
\def\Bcc#1{\overline{\bf #1}}
\def\Bbb#1{{\mathbb #1}}
\def\Cal#1{{\cal #1}}
\def\Frak #1{{\mathfrak #1}}

\def\GM{{\mathfrak G}{\mathfrak M}}
\def\then{\;\Longrightarrow\;}

\def\Rep{\mathop{\bf Rep}\nolimits}

\def\Span{\mathop{\rm Span}\nolimits}
\def\supp{\mathop{\rm supp}\nolimits}
\def\lcm{\mathop{\rm lcm}\nolimits}
\def\lc{\mathop{\rm lc}\nolimits}
\def\lift{\mathop{\rm lift}\nolimits}

\def\NF{\mathop{\rm NF}\nolimits}
\def\tail{\mathop{\rm tail}\nolimits}
\def\op{\mathop{\rm op}\nolimits}

\def\GM{{\mathfrak G}{\mathfrak M}}
\begin{document}
\maketitle
\renewcommand\labelitemi{\bf --}
\def\qed{\ifmmode\squareforqed\else{\unskip\nobreak\hfil
\penalty50\hskip1em\null\nobreak\hfil\squareforqed
\parfillskip=0pt\finalhyphendemerits=0\endgraf}\fi}
\def\squareforqed{\hbox{\rlap{$\sqcap$}$\sqcup$}}
\def\pf{\par\noindent {\it Proof }}
\theoremstyle{plain}
\newtheorem{Theorem}{Theorem}
\newtheorem{Corollary}[Theorem]{Corollary}
\newtheorem{Lemma}[Theorem]{Lemma}
\newtheorem{Proposition}[Theorem]{Proposition}
\newtheorem{Fact}[Theorem]{Fact}
\theoremstyle{definition}
\newtheorem{Definition}[Theorem]{Definition}
\newtheorem{Problem}[Theorem]{Problem}
\newtheorem{Void}{}[section]
\theoremstyle{remark}
\newtheorem{Example}[Theorem]{Example}
\newtheorem{Remark}[Theorem]{Remark}
\newtheorem{Algorithm}[Theorem]{Algorithm}
\newtheorem{Procedure}[Theorem]{Procedure}
\newtheorem{History}[Theorem]{Historical Remark}
\newtheorem{Notation}[Theorem]{Notation}
\newtheorem{teo}{Remark$\times$teo}
        
\newcounter{saveq}
\newcommand{\alpheqn}{\setcounter{saveq}{\value{equation}}
\stepcounter{saveq}\setcounter{equation}{0}
\renewcommand{\theequation}
{\mbox{\arabic{saveq}-\alph{equation}}} 
}
\newcommand{\reseteqn}{\setcounter{equation}{\value{saveq}}%
\renewcommand{\theequation}{\arabic{equation}}}                 
\begin{abstract} We present here a new approach for computing Gr\"obner bases for bilateral modules over an effective ring. Our method is based on Weispfenning notion of restricted Gr\"obner bases and related multiplication.
\end{abstract}
For (commutative) polynomial rings ${\Bbb F}[X_1,\ldots, X_n]$ \cite{Bu1,Bu2,Bu3,BCrit} over a field, Gr\"obner bases are computed by an iterative application of Buchberger test/completion which
 states that {\em a basis  $F$ is Gr\"obner  if and only if each element in the set of all S-polyno\-mials
$$\left\{S( f_{\alpha'},   f_{\alpha}):= 
\frac{\lcm({\bf M}(f_{\alpha}), {\bf M}(f_{\alpha'}))}{{\bf M}(f_{\alpha})} 
 f_{\alpha}- \frac{\lcm({\bf M}(f_{\alpha}), {\bf M}(f_{\alpha'}))}{{\bf M}(f_{\alpha'})}  f_{\alpha'} : f_\alpha, f_{\alpha'}\in F\right\}$$
between two elements of $F$, reduces to 0}.

The same result holds for free monoid rings ${\Bbb F}\langle X_1,\ldots, X_n\rangle$  over a field,
even if the shape of the {\em matches} (S-polynomials) between two elements is more involved and, in general, between two elements there could even be {\em infinitely many} S-polynomials; of course, in this setting, there is no hope of termination. Anyway, there are classical techniques
 \cite{Pr2} producing a procedure which, receiving as input a finite generating set $F$ for
the module ${\Bbb I}(F)$, provided that ${\Bbb I}(F)$ has a finite  Gr\"obner  basis, halts returning such a finite  Gr\"obner basis.

In both cases, it is well known that Buchberger  test/completion  is definitely superseded in each honest survey of Buchberger Theory and (what is more important) in all available implementations, by 
the test/completion based on the lifting theorem \cite{M}: {\em a generating set $F$ is a Gr\"obner basis if and only if
each element in a minimal basis of the syzygies among the
leading monomials $\{{\bf M}(f_{\alpha}) : f_{\alpha}\in F\}$ lifts, via Buchberger reduction, 
to a syzygy among the elements of $F$}. 

The point is that the lifting theorem allowed Gebauer--M\"oller \cite{GM} to give more efficient criteria. 
Thus they  detect at least as many ``useless'' pairs 
as Buchberger's two criteria \cite{BCrit}, but {\em  they do not need to verify whether a pair satisfies the conditions required by the Second Criterion and thus they avoid the consequent bottleneck} needed for listing and ordering the  
S-pairs (in the commutative case they are $(\#F)^2$ while a careful informal analysis in that setting suggests that the S-pairs needed by Gebauer--M\"oller Criterion are $n\#F$).
Moreover, the flexibility of M\"oller lifting theorem approach - with respect to Buchberger S-pair test - allows the former to extend Buchberger theory  {\em verbatim} at least to (non commutative) monoid rings over PIRs. 

We can remark that Buchberger Theory and Algorithm for left (or right) ideals of 
monoid rings over PIRs essentially  repeats {\em verbatim} the same Theory and Algorithm
as the commutative case.

The same happens in the first class of twisted polynomial rings whose Buchberger Theory and Algorithm has been studied, {\em solvable polynomial rings} over a field \cite{KrW}: there the left case is obtained simply by reformulating  Buchberger test, while the bilateral case is solved via Kandri-Rody--- Weispfenning completion which essentially consists of a direct application of Spear's Theorem.

Later, Weispfenning studied an interesting class of rings,
${\Bbb Q}\langle x,Y\rangle/{\Bbb I}(Yx-x^eY),$ $e\in{\Bbb N}, e>1$ \cite{W}, \cite[IV.49.11,IV.50.13.6]{SPES},
and essentially applied the same kind of completion:
instead of the bilateral ideal
$${\Bbb I}_2 := \Span_{\Bbb Q}\left(x^aY^bfx^cY^d : (a,b,c,d)\in{\Bbb N}^4\right)$$
he considered the {\em restricted} ideal
$${\Bbb I}_W := \Span_{\Bbb Q}\left(x^afY^d : (a,d)\in{\Bbb N}^2\right).$$
Then he computed a {\em restricted Gr\"obner basis}  of it via Buchberger test and extended this restricted Gr\"obner basis to the
required  bilateral Gr\"obner basis via a direct application of Spear's Theorem.
The point is that, if we denote $\diamond$ the commutative multiplication
$$x^aY^d\diamond x^cY^b =x^{a+c}Y^{b+d}, (a,b,c,d)\in{\Bbb N}^4,$$
the computation of restricted Gr\"obner bases {\em verbatim} mimicks the commutative case as it was done for left ideals in the case of  solvable polynomial rings.

A Buchberger Theory for each effective ring 
$${\Cal A}={\Cal Q}/{\Cal I}, {\Cal Q} := {\Bbb D}\langle {\Bcc v}\sqcup{\Bcc V}\rangle,
{\Cal I} = {\Bbb I}_2(G),$$
where ${\Bbb D}$ is a PID and $G$ a Gr\"obner basis w.r.t. a suitable term ordering $<$,
has been recently proposed in \cite[IV.50]{SPES} (for an abridged survey see \cite{Bath}),
using the strength of M\"oller lifting theorem.

In this setting, denoting $G_0 := G\cap{\Bbb D}\langle {\Bcc v}\rangle$, we need to consider S-pairs among elements which essentially have the shape
\begin{itemize}
\item $a\omega f, f\in F, \omega\in\langle {\Bcc V}\rangle, a\in{\Bbb D}\langle{\Bcc v}\rangle/{\Bbb I}_2(G_0)$ in the left case, and
\item $a\lambda f b\rho, f\in F, \lambda,\rho\in\langle {\Bcc V}\rangle, a,b\in{\Bbb D}\langle {\Bcc v}\rangle/{\Bbb I}_2(G_0)$ in the bilateral case.
\end{itemize}
\bigskip

While reading the proofs of  \cite[IV]{SPES} the senior author realized a wrong description of the S-polynomials required by the bilateral lifting theorem in an example involving the Ore algebra
${\Bbb Z}[X,Y,Z]/{\Bbb I}(YX-2XY,ZX-3XZ,ZY-5XZ)$ \cite[IV.50.11.8]{SPES} which was therefore forced to remove; at the same time, however, the reading of the section devoted to Weispfenning ring suggested him how to formalize an intuition informally expressed in \cite{Roma}. Applying this approach to Ore algebras \cite{labOrE} the junior author  formalized the notion of Weispfenning multiplication $\diamond$ and realized that it allows to extend {\em verbatim} Buchberger First Criterion and, consequently, the algorithms based on Gebauer-M\"oller Criteria \cite{GM}, \cite[II.25.1]{SPES}.

This provides an alternative (and more efficient) approach for producing bilateral Gr\"obner bases, via the notion of restricted Gr\"obner bases, for which we have to apply the test to elements  having the shape
\begin{itemize}
\item $a\omega \diamond f, f\in F, \omega\in\langle {\Bcc V}\rangle, a\in{\Bbb D}\langle{\Bcc v}\rangle/{\Bbb I}_2(G_0)$
\end{itemize}
and for which Gebauer-M\"oller Criteria are available; once a bilateral Gr\"obner basis is thus produced a direct application of Spear Theorem is all one needs. 
\bigskip

In Sections 1-3 we discuss in detail our notion of  {\em effective ring}, {\em i.e.} a ring ${\Cal A}$ 
presented, accordingly the universal property of free monoid rings, as a quotient ${\Cal A}={\Cal Q}/{\Cal I}$ of
a free monoid ring  ${\Cal Q} := {\Bbb D}\langle {\Bcc v}\sqcup{\Bcc V}\rangle$ modulo a bilateral ideal ${\Cal I} = {\Bbb I}_2(G)$, presented in turn by
its Gr\"obner basis w.r.t. a suitable term ordering $<$.
Thus the ring ${\Cal A}$ turns out to be a left R-module over the effectively given ring 
$$R := {\Cal R}/{\Bbb I}_2(G_0),  {\Cal R} :={\Bbb D}\langle {\Bcc v}\rangle, G_0 := G\cap{\Cal R}.$$

In Section 4 we discuss the
pseudovaluation \cite{A}  which is naturally induced on  ${\Cal A}$ by the classical filtration/valuation
of ${\Cal Q}$  related with Buchberger Theory, so that in Section 5 we can import on ${\Cal A}$ the
notions and main properties of Gr\"obner bases, Gr\"obner presentation, normal forms.

At the same time after having introduced Weispfenning multiplication (Section~6), we can extend the same notions and properties (Section~7) to the case of restricted modules, proving a lifting theorem for them  (Section~8) 
and consequently listing the S-polyomials needed to test/completing a restricted basis   (Section~11); an adaptation  of Weispfenning Completion in this setting (Section~9), allows to produce, iteratively, a bilateral Gr\"obner basis from which a strong bilateral Gr\"obner basis can be easisly deduced (Section~12). 

Of course, in this setting it is well-known that there is no chance to hope for a terminating algorithm, unless the ring is noetherian and its representation is properly restricted; the classical approach consists in producing a procedure which terminates if and only if the module generated by a given finite basis
has a finite Gr\"obner basis which, in this case, is returned (Section~10).
\bigskip

The paper is completely self-contained and can be read without knowing \cite{SPES} and \cite{Bath}; it requires however a good knowledge of the 
classical papers on which is based the core of Buchberger Theory: the results by Buchberger  \cite{Bu1,Bu2,Bu3,BCrit}, Spear \cite{Sp}, Zacharias \cite{Z}, M\"oller \cite{M}, Gebauer-M\"oller \cite{GM}, Traverso\cite{TD, Sugar}, Weispfenning \cite{KrW,BW,W}, Pritchard \cite{Pr1,Pr2}, Apel \cite{A}.
\section{Effectiveness}

Given any set  ${\Bcc Z}$ and denoting $\langle {\Bcc Z}\rangle$ the monoid of all words over the alphabet ${\Bcc Z}$,
we can consider the 
free monoid ring ${\Cal Q} := {\Bbb D}\langle {\Bcc Z}\rangle$ of $\langle {\Bcc Z}\rangle$
over the principal ideal domain  ${\Bbb D}$ 
whose elements are the finite sums of ``monomials'' $c\tau, c\in{\Bbb D}, \tau\in\langle {\Bcc Z}\rangle$, and whose product is obtained by distributing the word concatenation  of $\langle {\Bcc Z}\rangle$ :
$$cx_1x_2\ldots x_m\cdot  dy_1\ldots y_n=cdx_1x_2\ldots x_my_1\ldots y_n \Forall c,d\in{\Bbb D}, x_i,y_j\in {\Bcc Z}.$$

The ring ${\sf Q} := {\Bbb Z}\langle {\Bcc Z}\rangle$ has the following universal property: any map ${\Bcc Z}\to{\sf A}$ over any ring  with identity ${\sf A}$ can be uniquely extended to a ring morphism 
${\sf Q} \to {\sf A}$. Therefore:

\begin{Fact}\label{50cF1} 
For a (not necessarily commutative) ring with identity ${\sf A}$, there is a (not necessarily finite nor necessarily countable) set ${\Bcc Z}$ 
and a projection
$\Pi : {\sf Q} := {\Bbb Z}\langle {\Bcc Z}\rangle \twoheadrightarrow {\sf A}$ so that, denoting
${\sf I}\subset
{\sf Q} = {\Bbb Z}\langle {\Bcc Z}\rangle$ the bilateral ideal ${\sf I} := \ker(\Pi)$, we have ${\sf A}={\sf Q}/{\sf I}$.
\end{Fact}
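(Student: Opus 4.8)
The plan is to build everything on the universal property of ${\sf Q} = {\Bbb Z}\langle {\Bcc Z}\rangle$ that was stated immediately before the Fact. First I would choose the set ${\Bcc Z}$: the safe, universal choice is to take ${\Bcc Z}$ to be (the underlying set of) ${\sf A}$ itself, so that there is a tautological set map $\iota\colon {\Bcc Z}={\sf A}\to{\sf A}$, namely the identity on the underlying set. Any smaller generating set of ${\sf A}$ would do as well, but taking all of ${\sf A}$ avoids having to argue that a generating set exists; this is why the statement only promises ``a (not necessarily finite nor necessarily countable) set.''

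Next I would invoke the universal property verbatim: since ${\sf A}$ is a ring with identity and $\iota\colon{\Bcc Z}\to{\sf A}$ is a set map, it extends uniquely to a ring morphism $\Pi\colon{\sf Q}={\Bbb Z}\langle{\Bcc Z}\rangle\to{\sf A}$ with $\Pi(z)=z$ for all $z\in{\Bcc Z}={\sf A}$. Concretely $\Pi$ sends a monomial $c\, x_1\cdots x_m$ (with $c\in{\Bbb Z}$, $x_i\in{\sf A}$) to $c\,(x_1\cdots x_m)\in{\sf A}$, the product taken in ${\sf A}$, and extends additively; one checks this respects both operations, but that is exactly the content of the universal property and needs no further work here. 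Then I would show $\Pi$ is surjective: every $a\in{\sf A}$ lies in ${\Bcc Z}$ and satisfies $a=\Pi(a)$, so already the degree-one monomials hit all of ${\sf A}$; hence $\Pi$ is a projection $\Pi\colon{\sf Q}\twoheadrightarrow{\sf A}$.

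Finally I would set ${\sf I}:=\ker(\Pi)$. Since $\Pi$ is a ring morphism, ${\sf I}$ is a two-sided (bilateral) ideal of ${\sf Q}$, and by the first isomorphism theorem for rings the induced map ${\sf Q}/{\sf I}\to{\sf A}$ is an isomorphism, i.e. ${\sf A}={\sf Q}/{\sf I}$, which is the assertion. The proof is essentially a one-line application of the universal property plus the homomorphism theorem, and there is no real obstacle; the only point that deserves a word is the choice of ${\Bcc Z}$, and the only subtlety worth flagging is that no finiteness or cardinality bound can be expected in general, since ${\sf A}$ may be arbitrarily large and need not be finitely generated as a ring — which is precisely why the statement is phrased with that caveat.
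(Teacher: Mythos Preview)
Your proof is correct and follows exactly the paper's own approach: take ${\Bcc Z}:={\sf A}$ with the identity map and invoke the universal property of ${\Bbb Z}\langle{\sf A}\rangle$. The paper's proof is the one-line version of what you wrote; your added remarks on surjectivity and the first isomorphism theorem just make explicit what the paper leaves implicit.
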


\begin{proof} It is sufficient to consider the set ${\Bcc Z} := {\sf A}$
and the identity map ${\Bcc Z} := {\sf A}\to{\sf A}$ in order to obtain the result by the 
universal property of ${\sf Q} := {\Bbb Z}\langle  {\sf A}\rangle$.
\end{proof}

Of course, each {\em commutative} ring ${\sf A}$ can be represented in a similar way as a quotient of the commutative polynomial ring ${\sf P} := {\Bbb Z}[{\Bcc Z}]$ modulo an ideal ${\sf I}$.

\bigskip

Let $R$ be a (not necessarily commutative) ring  with identity ${\bf 1} _R$ and 
${\Cal A}$ 
another (not necessarily commutative) ring 
with identity  ${\bf 1} _{\Cal A}$
which is a left module on $R$.

\begin{Definition}\cite{Zac}\label{EfGi} We
consider  ${\Cal A}$ to be {\em effectively given}  when we are given
\begin{itemize}
\renewcommand\labelitemi{\bf --}
\item a Zacharias \cite[II.26.1]{SPES} principal ideal domain ${\Bbb D}$ with {\em canonical representatives} \cite{Zac};
\item sets ${\Bcc v} := \{x_1,\ldots,x_j,\ldots\}$, ${\Bcc V} := \{X_1,\ldots,X_i,\ldots\}$, which are {\em  countable}, 
and 
\item ${\Bcc Z} := {\Bcc v}\sqcup{\Bcc V} = \{x_1,\ldots,x_j,\ldots,X_1,\ldots,X_i,\ldots\};$ 
\item rings 
${\Cal R} := {\Bbb D}\langle {\Bcc v}\rangle \subset
 {\Cal Q} := {\Bbb D}\langle {\Bcc Z}\rangle$;
\item projections $\pi :  {\Cal R} = {\Bbb D}\langle x_1,\ldots,x_j,\ldots\rangle
 \twoheadrightarrow R$ and
\item $\Pi : {\Cal Q} := {\Bbb D}\langle x_1,\ldots,x_j,\ldots,X_1,\ldots,X_i,\ldots\rangle \twoheadrightarrow {\Cal A}$ which satisfy
$$\Pi(x_j)=\pi(x_j){\bf 1} _{\Cal A}, \Forall x_j\in{\Bcc v},$$ so that 
$\Pi\left( {\Cal R}\right) =\left\{ r{\bf 1} _{\Cal A} :r\in R\right\}\subset{\Cal A}$.
\end{itemize}
Thus denoting
\begin{itemize}
\renewcommand\labelitemi{\bf --}
\item  ${\Cal I} := \ker(\Pi)\subset {\Cal Q}$ and
\item  $I := {\Cal I}\cap{\Cal R} =\ker(\pi)\subset{\Cal R}$,
\end{itemize} 
we have
${\Cal A}={\Cal Q}/{\Cal I}$ and $R={\Cal R}/I$; moreover we can wlog 
assume that $R\subset{\Cal A}$.

Further, when considering ${\Cal A}$ as effectively given in this way, we explicitly impose the Ore-like requirement that 
\begin{equation}\label{50cEqAlg}
X_ix_j\equiv\sum_{l=1}^{i} \pi(a_{lij})X_l+ \pi(a_{0ij})\bmod{{\Cal I}},
a_{lij}\in  {\Bbb D}\langle {\Bcc v}\rangle, 
\end{equation}
for all $X_i\in{\Bcc V}, x_j\in {\Bcc v}$.
\end{Definition}

\begin{Remark}\label{50cRRem}
\begin{enumerate}\

\item  
It is sufficient to consider the uncountable field of the reals ${\Bbb R}$, to understand that not necessarily each ring ${\Cal A}$ can be provided of a Buchberger Theory.

Essentially, our definition of an {\em effectively given} ring ${\Cal A}$ is a specialization of the one introduced (under the same name of {\em explizite-bekann}) for fields by van der Waerden \cite{vdW}; the difference is that the ablity of performing arithmetics in {\em endlichvielen Schritten} is granted here by the implicit assumption of knowing a Gr\"obner basis of ${\Cal I}$.

Moreover, in the commutative case, the recent result of \cite{W-W} which, following an old idea of Buchberger \cite{Bu4}, obtains a degree-bound evaluation for ideal membership test and canonical form computation by merging Grete Hermann's \cite{Her} and Dub\'e's \cite{Dub} bounds,
grants 
a representation of ${\Cal A}$ which even satisfies  Hermann's \cite[p.736]{Her} requirement of {\em an upper bound for the number of operations needed by the computation}.

If we are interested in polynomial rings with coefficients in ${\Bbb R}$ or in a ring of analytical functions (as in Riquiet-Janet Theory \cite{Jan1,Jan3,Pom}), since a given finite basis has a finite number of coefficients  
$c_i\in R$, the requirement that the data are effectively given essentially means that we need to provide the algebraically dependencies among such $c_i$.

For instance while the rings ${\Bbb Q}[\pi]$ and ${\Bbb Q}[e]$ can be considered effectively given as ${\Bbb Q}[v]$ within Kronecker's Model \cite[I.8.1-3.]{SPES}, the problem arises with ${\Bbb Q}[\pi,e]$: the 
Kronecker's Model ${\Bbb Q}[v_1,v_2]$ is valid provided that 
$\pi$ and $e$ are algebraically independent; potential 
algebraic dependencies generate an ideal ${\sf I}\subset{\Bbb Q}[v_1,v_2]$
and the ring can be considered effectively given under Definition~\ref{EfGi} 
only if such ideal is explicitly produced thus representing
${\Bbb Q}[\pi,e]$ as ${\Bbb Q}[v_1,v_2]/{\sf I}$; the point, of course, is that the status 
of algebraically dependency between $\pi$ and $e$ is still open.
\item The Ore-like requirement (\ref{50cEqAlg}),  which wants that no higher-indexed ``variable'' $X_l, l>i$, appears in the representation, in the left ${\Cal R}$-module  ${\Cal A}$, of a multiplication of a ``variable'' $X_i$ at  the right by a ``coefficient'' $x_j$, is necessary in order to avoid non-noetherian reductions.

In order to illustrate the r\^ole of condition~(\ref{50cEqAlg}), the most natural example is the free monoid ring ${\Bbb Z}\langle x,y\rangle$ which is naturally a left ${\Bbb Z}[x]$-module; a natural choice for the generating set $\langle\Pi({\Bcc V})\rangle = \Pi(\langle{\Bcc V}\rangle)$ is ${\Bcc V} =\{X_i, i\in{\Bbb N}\}, \Pi(X_i) = yx^i$ which gives, through the isomorphism $\Pi$, the equivalent representation ${\Bbb Z}\langle x,y\rangle\cong 
{\Bbb Z}[x]\langle {\Bcc V}\rangle$ and the projection
$$\Pi : {\Bbb Z}\langle x,X_0,X_1,\ldots\rangle \twoheadrightarrow {\Bbb Z}\langle x,y\rangle, \ker(\Pi) = \{X_ix-X_{i+1},  i\in{\Bbb N}\},$$
and in order to obtain ${\bf T}(X_ix-X_{i+1}) = X_ix$ we are forced to use the non-noetherian ordering
$X_1 >_V X_2 >_V \ldots >_V X_i >_V  \ldots$ on ${\Bcc V}$ which would require a related Hironaka Theory  \cite{Hir}.

Thus our definition considers ${\Bbb Z}\langle x,y\rangle$ as {\em not}
 effectively given as a left ${\Bbb Z}[x]$-module.
\qed\end{enumerate}\end{Remark}

For each $m\in{\Bbb N}$, we denote $\{{\bf e}_1,\ldots,{\bf e}_m\}$    
 the canonical basis  of the free ${\Cal Q}$-module ${\Cal Q}^m$,
whose basis as a left ${\Bbb D}$-module is the set of {\em terms}
$$\langle {\Bcc Z}\rangle^{(m)}:=\{t{\bf e}_i : t\in\langle {\Bcc Z}\rangle, 1\leq i\leq m\}.$$  


If we impose on $\langle {\Bcc Z}\rangle^{(m)}$ a term ordering $<$, then 
each $f\in {\Cal Q}^m$  has a unique representation as 
an ordered linear combination of terms $t\in\langle {\Bcc Z}\rangle^{(m)}$ with coefficients in ${\Bbb D}$:
$$f = \sum_{i=1}^s c(f,t_i) t_i : c(f,t_i) \in {\Bbb D}\setminus\{0\}, t_i \in \langle {\Bcc Z}\rangle^{(m)}, 
t_1 > \cdots > t_s.$$
The {\em support} of $f$ is the set $\supp(f)
:= \{t : c(f,t)\neq 0\};$ we further denote
${\bf T}(f) := t_1$
the {\em maximal term}\index{maximal!term} of $f$, $\lc(f)
:= c(f,t_1)$ its {\em leading coefficient}\index{leading!cofficient} and ${\bf M}(f) := c(f,t_1)t_1$ 
its {\em maximal monomial}\index{maximal!monomial}.

For a subset $G\subset{\Cal Q}^m$ of a module ${\Cal Q}^m$, ${\Bbb I}_L(G), {\Bbb I}_R(G),{\Bbb I}_2(G)$ denotes the left (resp. right, bilateral) module generated by $G$, the index being dropped when there is no need of specification; moreover  
${\bf T}\{G\}$ denotes the set
$${\bf T}\{G\}:=\{{\bf T}(f) : f\in {\sf I}\}\subset\langle {\Bcc Z}\rangle^{(m)}.$$
\section{Recalls on Zacharias rings and canonical representation}

Zacharias approach \cite{Z}  to Buchberger Theory consisted in remarking that,
 if each module ${\sf I}\subset R\langle {\Bcc Z}\rangle^m$ has a groebnerian
property, necessarily the same property must be satisfied at least by the
modules ${\sf I}\subset R^m\subset R\langle {\Bcc Z}\rangle^m$ and thus such property in $R$ is available and can be used to device a procedure granting the same property in $R\langle {\Bcc
Z}\rangle^m$.
The most elementary applications of Zacharias approach is the generalization  
(up to membership test and syzygy computation) of
the property of canonical forms from the case in which $R={\Bbb F}$ is a  
field to the general case: all we need is an effective notion of canonical
forms for modules in $R$. 

\begin{Definition}[Zacharias]  \cite{Z} \label{c46D1}
 A   ring $R$ is said to have
{\em  canonical representatives}  if
 there is an algorithm  which, given an element $c\in R^m$ and a  (left, bilateral, right)  module
 ${\sf J}\subset R^m$,
computes a {\em unique} element $\Rep(c,{\sf J})\in R^m$ such that
\begin{itemize}
\renewcommand\labelitemi{\bf --}
\item  $c-\Rep(c,{\sf J})\in{\sf J}$,
\item $\Rep(c,{\sf J}) = 0 \iff c\in{\sf J}$.
\end{itemize}
The set 
$$R^m\supset{\bf Zach}(R^m/{\sf J}) := \Rep({\sf J}) := \left\{\Rep(c,{\sf J}) :c\in R^m \right\}\cong R^m/{\sf J}$$
is called the {\em canonical Zacharias representation} of the module $R^m/{\sf J}$.
\qed\end{Definition}

Remark that, for each $c,d\in R^m$ and each module ${\sf J}\subset R^m$, we have
$$c-d\in{\sf J} \iff \Rep(c,{\sf J})=\Rep(d,{\sf J}).$$

\begin{Definition}  \cite{Z} (cf. \cite[II. Definition~{26.1.1}]{SPES})  \label{c46D3}
\renewcommand\theenumi{{\rm (\alph{enumi})}}
 A ring $R$ with identity is called 
 a (left) {\em Zacharias ring}\index{Zacharias!ring}\index{ring!Zacharias}
if it satisfies the following properties:
\begin{enumerate}
\item R is a noetherian ring;
\item  there is an algorithm which, for each 
$c \in R^m, \, C := \{c_1,\ldots c_t\} \subset R^m\setminus\{0\},$
 allows to decide whether 
$c\in {\Bbb I}_L(C)$  in which case it produces elements 
$d_i\in R :  c = \sum_{i=1}^t d_i c_i;$
\item there is an algorithm  which, given
$\{c_1,\ldots c_t\} \subset R^m\setminus\{0\},$ computes a finite set of
generators for 
the left syzygy R-module  $\left\{(d_1,\cdots,d_t) \in R^t :
\sum_{i=1}^t d_i c_i = 0\right\}$.
\end{enumerate}

Note that
 \cite{M}   for a  ring $R$ with identity which satisfies (a) and (b), (c) is equivalent to
\begin{enumerate}
\setcounter{enumi}{3}
\item there is an algorithm  which, given
$\{c_1,\ldots c_s\} \subset R^m\setminus\{0\},$ computes a finite basis of the
ideal
$${\Bbb I}_L(\{c_i : 1\leq i < s\}) : {\Bbb I}_L(c_s).$$
\end{enumerate}

If $R$ has canonical representatives,
we improve the computational assumptions
of Zacharias rings, requiring also  the following property:
\begin{enumerate}\setcounter{enumi}{4}
\renewcommand\theenumi{{\rm (\alph{enumi})}}
\item there is an algorithm  which, given an element $c\in R^m$ and a 
 left
module ${\sf J}\subset R^m$, computes the unique canonical representative
 $\Rep(c,{\sf J})$.
 \qed\end{enumerate}
\end{Definition} 

We can now precise our assumption on ${\Bbb D}$ requiring that it is a Zacharias PID with canonical representatives.

We begin by noting that when ${\Bbb D}={\Bbb Z}$, for each $m\in {\Bbb Z}$,  reasonable sets $A_m$
of the canonical
representatives of the residue classes of ${\Bbb Z}_m={\Bbb Z}/{\Bbb I}(m)$
are $$A_m = \{z\in {\Bbb Z} : -\frac{m}{2}<z\leq\frac{m}{2}\},
A_m = \{z\in {\Bbb Z} : 0<z\leq m\} \Or A_m = \{z\in {\Bbb Z} : 0\leq z< m\}.$$ 
In the general case we remark that, if we use Szekeres notation \cite{S}, \cite[IV.46.1.1.2]{SPES}, \cite{Zac} and denote
${\sf I}_\tau$ the left {\em Szekeres ideal}
$${\sf I}_\tau := \{\lc(f) : f\in{\sf I}, {\bf T}(f) = \tau\}\cup\{0\} = {\Bbb I}(c_\tau)\subset {\Bbb D}$$
and $c_\tau$ its {\em Szekeres generator},
for each module ${\sf I} \subset {\Cal Q}^m$  and each 
$\tau\in\langle {\Bcc Z}\rangle^{(m)}$,
we obtain

\begin{itemize}
\item the relation
$$\omega\mid\tau \then 
c_\tau\mid c_\omega,$$
for each $\tau,\omega\in{\bf T}\{{\sf I}\} := \{{\bf T}(f) : f\in {\sf I}\}\subset\langle {\Bcc Z}\rangle^{(m)};$
\item the partition 
$\langle {\Bcc Z}\rangle^{(m)} = {\bf L}({\sf I})\sqcup{\bf R}({\sf I})\sqcup{\bf N}({\sf I})$
of $\langle {\Bcc Z}\rangle^{(m)}$
where 
\begin{itemize}
\renewcommand\labelitemi{\mathbf --}
\item ${\bf N}({\sf I}) := \{\tau\in\langle {\Bcc Z}\rangle^{(m)} : {\sf I}_\tau = (0)\}$,
\item ${\bf L}({\sf I}) :=  \{\tau\in\langle {\Bcc Z}\rangle^{(m)} : {\sf I}_\tau = {\Bbb D}\}$,
\item ${\bf R}({\sf I}) := \left\{\tau\in\langle {\Bcc Z}\rangle^{(m)} : {\sf I}_\tau  \notin\bigl\{(0),{\Bbb D}\bigr\}\right\};$
\end{itemize}
\item  the {\em canonical Zacharias representation} 
\begin{eqnarray*}
{\Cal Q}^m\supset{\bf Zach}({\Cal Q}^m/{\sf I}) := \Rep({\sf I}) 
&=&
 \Bigl\{\Rep(c,{\sf I}) :c\in {\Cal Q}^m \Bigr\} 
\\ &:=& \bigoplus\limits_{\tau\in\langle {\Bcc Z}\rangle^{(m)}} \Rep({\sf I}_\tau)\tau
\\ &=& \bigoplus\limits_{\tau\in{\Cal T}^{(m)}}{\bf Zach}(R/{\sf I}_\tau)\tau
\cong {\Cal Q}^m/{\sf I}
\end{eqnarray*}
of the module ${\Cal Q}^m/{\sf I}.$
\end{itemize}

\section{Zacharias canonical representation of Effective Associative Rings}

If we fix 
\begin{itemize}
\renewcommand\labelitemi{\bf --}
\item a term-ordering $<$ on $\langle {\Bcc Z}\rangle$
\end{itemize}
we can assume  ${\Cal I}$ to be given via
\begin{itemize}
\renewcommand\labelitemi{\bf --}
\item its bilateral Gr\"obner basis $G$ w.r.t. $<$
\end{itemize}
and, if $<$ satisfies
\begin{equation}\label{50cEq<}
X_i>t \Forall t\in\langle {\Bcc v}\rangle \And X_i\in{\Bcc V},
\end{equation}
also $I$ is given via
\begin{itemize}
\renewcommand\labelitemi{\bf --}
\item its bilateral Gr\"obner basis $G_0 := G\cap{\Cal R}$ w.r.t. $<$. 
\end{itemize}

Since condition~(\ref{50cEqAlg}) implies that, for each $X_i\in{\Bcc V}, x_j\in {\Bcc v}$, 
$$f_{ij} := X_ix_j-\sum_{l=1}^{i} a_{lij}X_l- a_{0ij}\in{\Cal I}\subset{\Cal Q},$$
if we further require that $<$ satisfies
\begin{equation}\label{50cEq<>}
X_i x_j={\bf T}(f_{ij}) \Forall X_i\in{\Bcc V}, x_j\in {\Bcc v},
\end{equation}
and denote $C := \{f_{ij} : X_i\in{\Bcc V}, x_j\in {\Bcc v}\}$ we have
\begin{itemize}
\item $G_0\sqcup C\subset G$,
\item  ${\Cal A}$  is generated  as $R$-module by $\Pi(\langle{\Bcc V}\rangle)$
 and,
 \item   as ${\Bbb D}$-module, by a subset of
 $\Bigl\{
\upsilon\omega :\upsilon\in\langle {\Bcc v}\rangle, \omega\in\langle {\Bcc
V}\rangle\Bigr\}.$ 
\end{itemize}

Thus, using Szekeres notation and setting $A_{c_{\tau}} :={\Bbb D}/{\Cal I}_\tau$ for each $\tau\in\langle {\Bcc Z}\rangle$, 
${\Cal A}$ can be described via its Zacharias canonical representation  w.r.t. $<$ as
\begin{equation}
{\Cal A} = {\Cal Q}/{\Cal I} 
\cong
\bigoplus\limits_{\omega\in\langle {\Bcc V}\rangle} \left(\bigoplus\limits_{\upsilon\in\langle {\Bcc v}\rangle} A_{c_{\upsilon\omega}} \upsilon\right)\omega =: {\bf Zach}_<({\Cal A})
\subset{\Cal Q}.
\end{equation}   
 
\begin{Example} W.r.t. the ideal ${\sf I} := {\Bbb I}(2X,3Y)\in{\Bbb Z}[X,Y]$ whose strong Gr\"obner basis is $\{2X,3Y, XY\}$, the ring 
$${\Cal A} := {\Bbb Z}[X,Y]/{\sf I} \cong {\Bbb Z}\langle X,Y\rangle/{\Bbb I}_2(2X,3Y, XY,YX)$$ has the 
canonical representation
$${\Cal A} \cong {\Bbb Z} + {\Bbb Z}_2[X] X + {\Bbb Z}_3[Y] Y;$$
thus the underlying ${\Bbb Z}$-module has the structure
$${\Cal A}  \cong 
{\Bbb Z} \oplus \left(\bigoplus\limits_{i\in{\Bbb N}\setminus\{0\}}{\Bbb Z}_2\right)  \oplus \left(\bigoplus\limits_{i\in{\Bbb N}\setminus\{0\}}{\Bbb Z}_3\right)$$
and the ring structure is defined by
$$\left(a, \ldots d_i,\ldots g_i,\ldots\right)\star
\left(b, \ldots e_i,\ldots h_i,\ldots\right)=
\left(c, \ldots f_i, \ldots, l_i,\ldots\right)$$
where $a,b,c\in{\Bbb Z}, d_i,e_i,f_i\in{\Bbb Z}_2\cong\{0,1\}, g_i,h_i,l_i\in{\Bbb Z}_3\cong\{-1,0,1\}$
and
\begin{eqnarray*}
c &:=& ab, \\
 f_i &:=& \pi_2(a)e_i+\sum_{j=1}^{i-1}d_je_{i-j}+d_i\pi_2(b), i\in{\Bbb N}\setminus\{0\}, \\
l_i &:=& \pi_3(a)h_i+\sum_{j=1}^{i-1}g_jh_{i-j}+g_i\pi_3(b), i\in{\Bbb N}\setminus\{0\}.
\end{eqnarray*}
\qed\end{Example}

If we further consider, for each $\omega\in\langle {\Bcc V}\rangle$, 
the left Szekeres ideal  
$${\Cal I}_\omega := \left\{r\in{\Cal R} : \exists h\in{\Cal Q}, {\bf T}(h)<\omega, r\omega+h\in{\Cal I}\right\}
\supset I = {\Cal I}\cap{\Cal R}$$
and the ring 
$R_\omega = {\Cal R}/{\Cal I}_\omega,$ 
having the Zacharias canonical representation
$${\bf Zach}_<(R_\omega) \cong  \bigoplus\limits_{\upsilon\in\langle {\Bcc v}\rangle} A_{c_{\upsilon\omega}} \upsilon\subset{\Cal R}$$ 
we obtain 
{
$${\bf Zach}_<( {\Cal R}/{\Cal I}_\omega)\subset{\bf Zach}_<( {\Cal R}/I)=
{\bf Zach}_<(R)\subset{\Cal R}$$}
and
\begin{equation}\label{50cEqZac2}
{\Cal A} \cong 
\bigoplus\limits_{\omega\in\langle {\Bcc V}\rangle} \left(\bigoplus\limits_{\upsilon\in\langle {\Bcc v}\rangle} A_{c_{\upsilon\omega}} \upsilon\right)\omega \cong  
\bigoplus\limits_{\omega\in\langle {\Bcc V}\rangle} R_\omega \omega\subset {\Cal R}\langle {\Bcc V}\rangle={\Cal Q}.
\end{equation}   

More precisely, denoting
\begin{itemize}
\renewcommand\labelitemi{\bf --}
\item ${\bf N}({\Cal I}) :=   \{\omega\in\langle{\Bcc V}\rangle : {\Cal I}_\omega = I\}$,
\item ${\bf L}({\Cal I}) := \{\omega\in\langle{\Bcc V}\rangle : {\Cal I}_\omega = R\}$,
\item ${\bf R}({\Cal I}) := \left\{\omega\in\langle{\Bcc V}\rangle : {\Cal I}_\omega\notin\left\{I,R\right\}\right\} $
\end{itemize}
we have the partition
$\langle{\Bcc V}\rangle = {\bf L}({\Cal I})\sqcup{\bf R}({\Cal I})\sqcup{\bf N}({\Cal I})$ 
and, denoting
\begin{itemize}
\renewcommand\labelitemi{\bf --}
\item ${\Cal B} ={\bf R}({\Cal I})\sqcup{\bf N}({\Cal I}) = 
\langle{\Bcc V}\rangle\setminus{\bf L}({\Cal I})\subset\langle{\Bcc V}\rangle$,
\end{itemize}
we obtain
\begin{enumerate}
\item ${\Cal B}\subset\langle{\Bcc V}\rangle$ is an order module {\em i.e.} $\lambda\tau\rho\in{\Cal B}\then\tau\in{\Cal B}$ for each $\lambda,\tau,\rho\in\langle{\Bcc V}\rangle;$
\item ${\Cal A}$ is both a left ${\Cal R}$-module and a left $R$-module with generating set ${\Cal B}$.
\end{enumerate}

Thus, each element $f\in{\Cal A}$ is uniquely represented via its canonical representation w.r.t. $<$
$$
{\bf Rep}(f,{\Cal J}) = 
\sum_{\omega\in{\Cal B}} a_\omega \omega\in{\bf Zach}_<({\Cal A})
$$
where, using the present notation, each
$$a_\omega = \sum_{\upsilon\in\langle {\Bcc v}\rangle} b_{\upsilon\omega} \upsilon\in{\bf Zach}_<(R_\omega)
$$
is the canonical representation of an element of the module ${\Cal R}/{\Cal I}_\omega$ and each $b_{\upsilon\omega}\in A_{c_{\upsilon\omega}}$
is the canonical representation of an element of the ring 
$A_{c_{\upsilon\omega}}:={\Bbb D}/{\Bbb I}({c_{\upsilon\omega}})={\Bbb D}/{\Bbb I}_{\upsilon\omega}$; we will   identify the elements in ${\Cal A}$, $R_\omega$ and 
$A_{c_{\upsilon\omega}}$ with their representatives.

\begin{Example}\label{50cExEx}
For  
${\Cal Q} = {\Bbb Z}\langle x_1,x_2,X_1\rangle,$ 
$$G_0 = \{x_2x_1\}, C = \{X_1x_1-x_2X_1, X_1x_2-x_1X_1\},
{\Cal I} = {\Bbb I}_2(G_0\cup C), {\Cal A} = {\Cal Q}/{\Cal I},$$
a minimal Gr\"obner basis of ${\Cal I}$ is
$G_0\cup C \cup \{x_1x_2^{i+1}X_1, i\in{\Bbb N}\},$
since we have
$$x_1x_2X_1=X_1\star x_2x_1-(X_1x_2-x_1X_1)\star x_1-x_1\star(X_1x_1-x_2X_1)$$ and, for $i\geq 1$
$$x_1x_2^{i+1}X_1=x_1x_2^{i}X_1\star x_1-x_1x_2^{i}\star(X_1x_1-x_2X_1).$$

We therefore have
$$R = {\Bbb Z}\langle x_1,x_2 \rangle/{\Bbb I}(x_2x_1), {\bf Zach}_<(R) = \Span_ {\Bbb Z}\{x_1^ix_2^j : (i,j)\in{\Bbb N}^2\},$$
and, denoting $R_l := R_{X_1^l}, {\Cal I}_{l} := {\Cal I}_{X_1^l}$ for each $l$,
we have,  for $l\geq 1$,
$${\Cal I}_l = {\Bbb I}_L(x_2x_1,x_1x_2^{i+1},i\in{\Bbb N}),
R_l = {\Bbb Z}\langle x_1,x_2 \rangle/{\Cal I}_l \cong  {\Bbb Z}[x_1,x_2]/{\Bbb I}(x_1x_2)$$
so that  ${\bf Zach}_<(R_l) =  \Span_ {\Bbb Z}\{x_1^i, x_2^j : i,j\in{\Bbb N}\}$ 
and 
$${\bf Zach}_<({\Cal A}) 
= {\Bbb Z}[x_1,x_2]\bigoplus\left(\bigoplus\limits_{l\geq 1} {\Bbb Z}[x_1,x_2]/{\Bbb I}(x_1x_2)X_1^l\right)$$
so that a generic element of ${\bf Zach}_<({\Cal A})$ has the form
$$f(x_1,x_2) = a(x_1,x_2) + \sum_{l>0} \left(b_l + c_l(x_1)+d_l(x_2)\right)X_1^l$$ with $a\in{\Bbb Z}[x_1,x_2], b_l\in{\Bbb Z},c_l\in{\Bbb Z}[x_1],d_l\in{\Bbb Z}[x_2],c_l(0)=d_l(0)=0$ and the related left $R$-algebra structure is defined by
\begin{eqnarray*}
x_1^{i+1} f(x_1,x_2)&=&x_1^{i+1}a(x_1,x_2)+\sum_{l>0} \left(b_lx_1^{i+1} + c_l(x_1)x_1^{i+1}\right)X_1^l,
\\
x_2^{j+1} f(x_1,x_2)&=&x_2^{j+1}a(0,x_2)+\sum_{l>0} \left(b_lx_2^{j+1} + d_l(x_2)x_2^{j+1}\right)X_1^l,
\\
x_1^{i+1}x_2^{j+1} f(x_1,x_2)&=&x_1^{i+1}x_2^{j+1}a(0,x_2).
\end{eqnarray*}\qed\end{Example}

\begin{Remark}\

\begin{enumerate}
\item  We must stress  that all inclusions --- 
$A_{c_{\upsilon\omega}}\subset{\Bbb D}$, 
${\bf Zach}_<(R_\omega)\subset{\Cal R} = {\Bbb D}\langle {\Bcc v}\rangle,$
${\bf Zach}_<({\Cal A})\subset {\Cal R}[{\Cal B}]\subset{\Cal R}\langle{\Bcc V}\rangle$ ---
must be understood as {\em set} inclusions only and do not preserve the module structure and
the notation ${\Cal R}\langle{\Bcc V}\rangle$  does not denote the canonical monoid ring but, as the notation ${\Cal R}[{\Cal B}]$, only the underlying free left ${\Cal R}$-modules with bases 
$\langle{\Bcc V}\rangle$ and  ${\Cal B}$.
\item Note that   Zacharias' approach holds for any effective unitary ring $R$ with canonical representations; thus of course the r\^ole of ${\Bbb D}$  can be assumed on one side by each effectively given domain/field, on the other side by, say, ${\Bbb D}({\Bcc x})$, ${\Bbb Q}({\Bcc x})$,\ldots.
Actually, if we are interested in polynomial rings with coefficients in ${\Bbb R}$ or in a ring of analytical functions, since a given finite basis has a finite number of coefficients  
$c_i\in R$, the requirement that the data are effectively given essentially means that we need to provide the algebraically dependencies among such $c_i$ (compare Remark~\ref{50cRRem}.1.).
\item Condition~(\ref{50cEqAlg}), restricting the choice of $<$ to a term-ordering satisfying
 Equation~(\ref{50cEq<>}),
grants that, for each $i,j,$ $X_ix_j\in{\bf T}\{{\Cal I}\}$ and thus that 
$C\subset G$;
moreover, since there is no possible match among the leading terms 
$\{X_ix_j : X_i\in{\Bcc V}, x_j\in {\Bcc v}\}$, 
it also grants that, in ${\Cal Q}$ and under $<$, $C$ is a bilateral  Gr\"obner basis of the ideal
${\Bbb I}_2(C)$ it generates.

Since  there are the obvious matches 
$$\left\{{\bf T}(f_{ij})\ast \tau - X_i\ast x_j\tau :  X_i\in {\Bcc V}, x_j\tau\in{\bf T}\{G_0\}\right\}$$
in general we cannot expect that
$G_0\cup C$  is a bilateral  Gr\"obner basis of the ideal
${\Bbb I}_2(G_0\cup C)$ it generates; this in turn implies that 
as left $R$-module, ${\Cal Q}/{\Bbb I}_2(G_0\cup C)$ is {\em not} necessarily free (see Example~\ref{50cExEx}).
\item  In the next sections we will discuss expressions
$$f =\sum_{l=1}^\mu a_l \lambda_l \star g_{l}\star   b_l \rho_l :
\lambda_l,\rho_l\in{\Cal B}, a_l\in R_{\lambda_l}\setminus\{0\},  b_l\in R_{\rho_l}\setminus\{0\}, g_l \in B$$ where $f\in M$ is an element and $B\subset M$ is a basis of a bilateral ${\Cal A}$-module  $M$.
Each element $a_l\in R_{\lambda_l}\setminus\{0\}$ is  to be considered either
\begin{itemize}
\item as any non-zero element in a residue class  modulo the left ideal ${\Cal I}_{\lambda_l}$ in the ring
${\Cal R} = {\Bbb Z}\langle{\Bcc v}\rangle$ or
\item as the Zacharias canonical representation of such residue class in the set 
${\bf Zach}_<(R_{\lambda_l})\subset{\bf Zach}_<(R)\subset{\Cal R}$, or even
\item as any non-zero element in a residue class  modulo the left ideal $\pi({\Cal I}_{\lambda_l})$ in the ring
$R$ by simply identifying $R$ with its Zacharias canonical representation ${\bf Zach}_<(R).$
\end{itemize}
Consequently each element $a_l \lambda_l$ represents a ``monomial" in ${\Cal A}$ where the coefficient $a_l$ can be interpreted either in $R$ or in ${\Cal R}$ but in both cases represents a residue class or its canonical representation.

As a consequence, in all setting in which ${\Cal A}$ is mainly considered as a left $R$-module, we choose of writing 
$a_l\in R\setminus\{0\}$.
\item Each free  ${\Cal A}$-module ${\Cal A}^m, m\in{\Bbb N}$, 
 -- the canonical basis of which will be denoted by $\{{\bf e}_1,\ldots,{\bf e}_m\}$ --  is an  $R$-module with basis the set of the  {\em terms}
$${\Cal B}^{(m)} := \{ t{\bf e}_i : t\in{\Cal B}, 1\leq i\leq m\}$$
and 
the projection $\Pi : {\Cal Q} := {\Bbb D}\langle {\Bcc Z}\rangle \twoheadrightarrow {\Cal A}$,
${\Cal I} := \ker(\Pi)$, ${\Cal A}={\Cal Q}/{\Cal I}$, extends to each canonical projection, still denoted $\Pi$,
$$\Pi : {\Cal Q}^m \twoheadrightarrow {\Cal A}^m, \ker(\Pi) = {\Cal I}^m
={\Bbb I}_2(G^{(m)})$$ where $G$ is the Gr\"obner basis  w.r.t. $<$ of
${\Cal I}$ and $G^{(m)} := \{g{\bf e}_i,g\in G, 1\leq i \leq m\}$
 is the Gr\"obner basis of ${\Cal I}^m$ w.r.t. any term-ordering on $\langle{\Bcc Z}\rangle^{(m)}$ --- which we still denote  $<$ with a slight abuse of notation --- satisfying, for each $t_1,t_2\in\langle{\Bcc Z}\rangle, \tau_1,\tau_2\in\langle{\Bcc Z}\rangle^{(m)}$,
$$t_1\leq t_2, \tau_1\leq \tau_2\then t_1\tau_1\leq  t_2\tau_2, \tau_1t_1\leq \tau_2 t_2 .$$
\qed\end{enumerate}
\end{Remark}

In connection with the choice of the order module
$${\Cal B} ={\bf R}({\Cal I})\sqcup{\bf N}({\Cal I}) = 
\langle{\Bcc V}\rangle\setminus{\bf L}({\Cal I})\subset\langle{\Bcc V}\rangle$$
as  module basis of ${\Cal A}$,
Spear's Theorem \cite[IV.50.6.3]{SPES} suggests to consider it well-ordered by the same term-ordering $<$ on $\langle{\Bcc Z}\rangle$ which we have used for providing the Zacharias representation 
 of ${\Cal A}$ discussed above
 and which in particular satisfies Equations~(\ref{50cEq<}) and~(\ref{50cEq<>}).
In fact, in our setting Spear  states  that, for any module ${\sf M}\subset{\Cal A}^m$, denoting ${\sf M}' : =\Pi^{-1}({\sf M}) = {\sf M}+{\Cal I}^m$, we have
\begin{enumerate}
\item if $F$ is a reduced Gr\"obner basis of ${\sf M}'$, then 
$$\{g\in F : g = \Pi(g)\} =
\{\Pi(g) : g\in F, {\bf T}(g)\in{\Cal B}^{(m)}\} = F\cap{\bf Zach}_<({\Cal A})^m$$
is a Gr\"obner basis of ${\sf M}$;
\item if $F\subset{\bf Zach}_<({\Cal A})^m$ -- so that in particular $\Pi(f)=f$ for each $f\in F$ --
 is  the Gr\"obner basis of ${\sf M}$, then $F\sqcup G^{(m)}$ is a   Gr\"obner basis of ${\sf M}'$.
 \end{enumerate}

Thus, w.r.t. a term-ordering $<$  satisfiying Equations~(\ref{50cEq<}) and~(\ref{50cEq<>}),
each non-zero element $f\in{\Cal A}^{(m)}$ has its canonical representation
$$f := \sum_{j=1}^s c(f,t_j{\bf e}_{\iota_j}) t_j{\bf e}_{\iota_j}\in{\bf Zach}_<({\Cal A})^m, 
t_j\in{\Cal B},c(f,t_j{\bf e}_{\iota_j})\in R_{t_j}\setminus\{0\},
1\leq \iota_j\leq m,$$
with $t_1{\bf e}_{\iota_1} >  t_2{\bf e}_{\iota_{2}}>\cdots> t_s{\bf e}_{\iota_{s}}$
and we denote,
$\supp(f)
:= \{t_j{\bf e}_{\iota_j} : 1\leq j\leq m\}$ the {\em support} of $f$,
${\bf T}_<(f) := t_1{\bf e}_{\iota_1}$
its {\em maximal term}, $\lc_<(f)
:= c(f,t_1{\bf e}_{\iota_1})$ its {\em leading coefficient} and ${\bf M}_<(f) :=
c(f,t_1{\bf e}_{\iota_1})t_1{\bf e}_{\iota_1}$  its {\em maximal monomial}.

If we denote, following  \cite{R2,R3}, 
${\sf M}({\Cal A}^m) := \{ct{\bf e}_i \, \vert \, t\in{\Cal B},c\in R_t\setminus\{0\},  1\leq i\leq m\},$
the unique finite representation above
can be reformulated 
$$f = \sum_{\tau\in\supp(f)} m_\tau, \, m_\tau = c(f,\tau)\tau$$
as a sum of elements of the {\em monomial set} ${\sf M}({\Cal A}^m).$

\bigskip
These notions heavily depend on Zacharias representation which in turn depends on the 
term-ordering $<$ we have fixed on $\langle{\Bcc V}\rangle$.

This has an unexpected advantage:  already in the case of {\em semigroup rings} \cite{Ros,MR1,MR2}
${\Cal A} = R[{\sf S}]$,
an elementary adaptation of Buchberger Theory (which would suggest to set ${\Cal B} := {\sf S}$) is impossible since  ${\sf S}$ does not possess a semigroup ordering. The paradoxical solution consists  \cite{MR1,MR2}, or at least can be interpreted as \cite[IV.50.13.5]{SPES} considering ${\sf S} := {\Cal B}$ not as a semigroup but as a subset of a proper free semigroup $\langle{\Bcc V}\rangle\supset{\Cal B}$ and, via Spear's Theorem, import to ${\Cal A}$ the   
{\em natural $\langle{\Bcc V}\rangle$-pseudovaluation}
$${\bf T}(\cdot) : {\Cal A}^m \mapsto {\Cal B}^{(m)} : f \to {\bf T}(f)$$
  of $R\langle{\Bcc V}\rangle$.

The general solution, thus,  consists into applying the classical filtration/valuation interpretation of Buchberger Theory \cite{Sw,7,A,MoSw} and to impose on ${\Cal Q}$ a $\Gamma$-pseudova\-luation
$$ {\bf T}(\cdot) : {\Cal A}^m \mapsto {\Cal B}^{(m)} \subset \Gamma^{(m)} : f \to {\bf T}(f)$$
where the semigroup $(\Gamma,\circ), {\Cal B}\subset\Gamma\subset\langle{\Bcc V}\rangle$,
is properly chosen on the basis of the structural properties of the relation ideal ${\Cal I}$
in order to obtain a smoother arithmetics of the associated graded ring ${\Cal G}:=G({\Cal A})$. 

\section{Apel: pseudovaluation}
Denote, for a semigroup $(\Gamma,\circ)$,
 $\Gamma^{(u)}$ the sets
$$\Gamma^{(u)} := \{\gamma e_i, \gamma\in\Gamma, 1\leq i \leq u\}, u\in{\Bbb N},$$
endowed with no operation except the natural action of $\Gamma$
$$\Gamma\times\Gamma^{(u)}\times\Gamma \to \Gamma^{(u)}: 
 (\delta_l,\gamma,\delta_r) \mapsto \delta_l\circ\gamma\circ\delta_r, 
 \Forall \delta_l,\delta_r\in\Gamma, \gamma\in\Gamma^{(u)}.$$

\begin{Definition} If $(\Gamma,\circ)$ is a  semigroup, a ring $ {\Cal A}$ is called a {\em
$\Gamma$-graded ring}\index{graded!ring} if there is a family of subgroups $\{{\Cal A}_\gamma : \gamma\in\Gamma\}$ such that 
\begin{itemize}
\item ${\Cal A} = \bigoplus_{\gamma\in \Gamma} {\Cal A}_\gamma$,
\item ${\Cal A}_\delta{\Cal A}_\gamma  \subset {\Cal A}_{\delta\circ\gamma}$ for any $\delta,\gamma\in \Gamma.$
\end{itemize}

A right ${\Cal A}$-module $M$ of a $\Gamma$-graded ring ${\Cal A}$ is called a {\em $\Gamma^{(u)}$-graded
${\Cal A}$-module}\index{graded!module} if there is a family of subgroups $\{M_\gamma : \gamma\in \Gamma^{(u)}\}$
such that 
\begin{itemize}
\item $M = \bigoplus_{\gamma\in \Gamma^{(u)}} M_\gamma,$
\item $M_\gamma{\Cal A}_\delta \subset M_{\gamma\circ\delta}$ for any $\delta\in \Gamma, \gamma\in \Gamma^{(u)}.$
\end{itemize}

Given two  $\Gamma^{(u)}$-graded right ${\Cal A}$-modules $M,N$,
by a {\em $\Gamma$-graded morphism\index{graded!morphism}  $\phi: M \to N$ of degree $\delta\in\Gamma$}  we shall mean a morphism such that $\Phi(M_\gamma)\subset N_{\gamma\circ\delta}$ for each $\gamma\in \Gamma^{(u)}.$

An ${\Cal A}$-bimodule $M$ of a $\Gamma$-graded ring ${\Cal A}$ is called a {\em $\Gamma^{(u)}$-graded
${\Cal A}$-bimodule} if there is a family of subgroups $\{M_\gamma : \gamma\in \Gamma^{(u)}\}$
such that 
\begin{itemize}
\item $M = \bigoplus_{\gamma\in \Gamma^{(u)}} M_\gamma,$
\item ${\Cal A}_\delta M_\gamma \subset M_{\delta\circ\gamma}$ and
$M_\gamma {\Cal A}_\delta \subset M_{\gamma\circ\delta}$ for any $\delta\in \Gamma, \gamma\in \Gamma^{(u)}.$
\end{itemize}

Given two  $\Gamma^{(u)}$-graded  ${\Cal A}$-bimodules $M,N$ by a {\em $\Gamma$-graded morphism $\phi: M \to N$ of degree $(\delta_l,\delta_r)\in\Gamma^2$}, we shall mean a morphism such that $\Phi(M_\gamma)\subset N_{\delta_l\circ\gamma\circ\delta_r}$ for each 
$\gamma\in \Gamma^{(u)}.$

Each element $x\in M_\gamma$ is called {\em homogeneous} of degree $\gamma\in\Gamma^{(u)}$.

Each element $x\in M$ can be uniquely represented as a finite sum 
$x := \sum_{\gamma\in \Gamma^{(u)}} x_\gamma$ where $x_\gamma\in M_\gamma$ and $\{\gamma :
x_\gamma \neq 0\}$ is finite; each such element $x_\gamma$ is called a {\em homogeneous
component}  of degree $\gamma.$
\qed\end{Definition}

\begin{Definition}[Apel] \cite{A} Let  $(\Gamma,\circ)$ be a semigroup
 well-ordered by a semigroup ordering $<$, ${\Cal A}$ a  ring which is a left $R$-module over a subring $R\subset{\Cal A}$ 
and $M$  an  ${\Cal A}$-module.

A {\em $\Gamma$-pseudovaluation}\index{pseudovaluation}
is a function $v: {\Cal A}\setminus\{0\} \mapsto \Gamma$ such that,  for each $a_1,a_2\in{\Cal A}\setminus\{0\},$
\begin{enumerate}
\item $v(a_1-a_2) \leq \max(v(a_1),v(a_2))$,
\item $v(a_1a_2) \leq v(a_1)\circ v(a_2)$,
\item $v(r) = {\bf 1}_\Gamma$ for each $r\in R\subset{\Cal A}$.
\end{enumerate}

Impose now on $\Gamma^{(u)}$
 a well-ordering, denoted, with a slight abuse of notation also $<$,
satisfying, for each 
$\delta_1,\delta_2\in \Gamma, \gamma_1, \gamma_2\in \Gamma^{(u)}$
$$\delta_1\leq \delta_2, \gamma_1\leq \gamma_2\then \delta_1\circ \gamma_1\leq  \delta_2\circ \gamma_2, 
\gamma_1\circ \delta_1\leq  \gamma_2\circ \delta_2.$$

A function $w: M\setminus\{0\} \mapsto \Gamma^{(u)}$ is said a {\em $v$-compatible
 $\Gamma^{(u)}$-pseudo\-valuation} on $M$ if it satisfies, for each
$a\in{\Cal A}\setminus\{0\}$ and each $m,m_1,m_2\in M\setminus\{0\},$ 
\begin{enumerate} \setcounter{enumi}{3}
\item $w(m_1-m_2) \leq \max(w(m_1),w(m_2)),$
\item $w(a m) \leq v(a)\circ w(m)$ and $w(m a) \leq w(m)\circ v(a).$
\end{enumerate}
\qed\end{Definition}

\begin{Notation} ({\em Cf.} \cite[II.Definition 24.6.5]{SPES} \label{ApNot} Given a  semigroup  $(\Gamma,\circ)$ 
well-ordered by a semigroup ordering $<$, a  ring ${\Cal A}$ which is a left $R$-module over a subring $R\subset{\Cal A}$, a $\Gamma$-pseudovaluation $v: {\Cal A}\setminus\{0\} \mapsto \Gamma$, an  ${\Cal A}$-bimodule $M$  and  a  $v$-compatible $\Gamma^{(u)}$-pseudovaluation $w: M\setminus\{0\} \mapsto \Gamma^{(u)}$  write
\begin{itemize}
\item $F_\gamma(M) :=\{m\in M : w(m) \leq \gamma\} \cup\{0\} \subset M,$ for each
$\gamma\in\Gamma^{(u)};$  
\item $V_\gamma(M) :=\{m\in M : w(m) < \gamma\} \cup\{0\} \subset M,$ for each
$\gamma\in\Gamma^{(u)};$
 \item $G_\gamma(M) := F_\gamma(M)/V_\gamma(M),$ for each
$\gamma\in\Gamma^{(u)};$  
\item $G(M) := \bigoplus_{\gamma\in \Gamma^{(u)}} G_\gamma(M)$.
\item ${\Cal L} : M \mapsto G(M)$  is the map such that, for each 
$m\in M, m \neq 0, {\Cal L}(m)$ denotes the residue class of $m \bmod V_{w(m)}(M)$ and ${\Cal
L}(0) = 0.$ 
\qed\end{itemize}
\end{Notation}

\begin{Definition} With the present notation,  we define
\begin{itemize}
\item the {\em associated graded ring} of ${\Cal A}$ the left $R$-module $G({\Cal A})$  which is a $\Gamma$-graded ring,
and 
\item the {\em associated graded module}\index{associated
graded!module} of $M$
the left $R$-module $G(M)$, which is a $\Gamma^{(u)}$-graded $G({\Cal A})$-module.
\qed\end{itemize}\end{Definition}

As we have remarked above, 
when the ring ${\Cal A}$ is explicitly given via the Zacharias representation (\ref{50cEqZac2}) we cannot use
 the function 
$$ {\bf T}(\cdot) : {\Cal A} \mapsto {\Cal B} : f \to {\bf T}(f)$$
as a  natural pseudovaluation because, in general, either ${\Cal B}$ is not a semigroup or, at least, $<$ is not a semigroup ordering on it. 

Thus we consider 
a semigroup $\Gamma$, ${\Cal B} \subset \Gamma \subset \langle {\Bcc V}\rangle,$ such that the restriction of $<$ on 
$\Gamma$ is a semigroup ordering.
In this way, the function 
$${\bf T}(\cdot) : {\Cal A} \mapsto {\Cal B} \subset \Gamma : f \to {\bf T}(f)$$
is a $\Gamma$-pseudovaluation, which we will call its {\em natural $\Gamma$-pseudovaluation} 
and the free  ${\Cal A}$-module ${\Cal A}^m$ has the  {\em natural  ${\bf T}(\cdot)$-compatible
pseudovaluation}
$$ {\bf T}(\cdot) : {\Cal A}^m \mapsto {\Cal B}^{(m)} \subset \Gamma^{(m)} : f \to {\bf T}(f).$$

Under these natural pseudovaluations, we have
\begin{itemize}
\item $G_\delta({\Cal A}) \cong R_\delta$ for each $\delta\in{\Cal B}$ and
\item $G_\delta({\Cal A}) = \{0\}$ for each $\delta\in \Gamma\setminus {\Cal B}$;
\item $G({\Cal A})$ and ${\Cal A}$ coincide {\em as subsets, (but not as rings nor as $R$-modules)}  and both have the Zacharias representation stated in (\ref{50cEqZac2});
\item $G_\gamma({\Cal A}^m) \cong R_\delta$ for each $\gamma=\delta{\bf e}_i\in{\Cal B}^{(m)}$ and
\item $G_\gamma({\Cal A}^m) = \{0\}$ for each $\gamma\in \Gamma^{(m)}\setminus {\Cal B}^{(m)}$;
\item $G({\Cal A}^m) = G({\Cal A})^m$  as $R$-modules.
\item ${\Cal L}(f) =  {\bf M}(f)$ for each $f\in{\Cal A}^m$.
\end{itemize}

\section{Bilateral Gr\"obner bases}
 
Let 
${\Cal A} ={\Cal Q}/{\Cal I}$ be an  effectively given left $R$-module, 
endowed with its natural $\Gamma$-pseudovaluation ${\bf T}(\cdot)$
where the  semigroup $(\Gamma,\circ)$  satisfies 
\begin{itemize}
\item ${\Cal B}\subset\Gamma\subset\langle{\Bcc V}\rangle$ and 
\item the restriction of $<$ on 
$\Gamma$ is a semigroup ordering.
\end{itemize}
We denote ${\Cal G} = G({\Cal A})$,
by $\star$ the multiplication of ${\Cal A}$ and by $\ast$  the one of ${\Cal G}$.

For any set $F\subset{\Cal A}^m$ we denote, in function of $<$:

\begin{itemize}
\item ${\bf T}\{F\} := \{{\bf T}(f) : f\in F\}\subset {\Cal B}^{(m)};$
\item ${\bf M}\{F\} := \{{\bf M}(f) : f\in F\}\subset {\sf M}({\Cal A}^m).$
\item ${\bf T}_2(F) := {\Bbb I}_2({\bf T}\{F\}) = 
\{{\bf T}(\lambda\star f\star\rho)  : \lambda,\rho\in {\Cal B}, f\in F\}= 
\{\lambda\circ {\bf T}(f)\circ\rho  : \lambda,\rho\in {\Cal B}, f\in F\}\subset {\Cal B}^{(m)};$ 
\item ${\bf M}_2(F) := \{{\bf M}(a\lambda\star f\star b\rho)   : a\in R_\lambda\setminus\{0\}, b\in R_\rho\setminus\{0\},\lambda,\rho\in {\Cal B}, f\in F\}
= \{m\ast {\bf M}(f)\ast n : m,n\in {\sf M}({\Cal A}), f\in F\} 
\subset  {\sf M}({\Cal A}^m).$
\end{itemize}

\begin{Definition}\label{50cD1} 
Let ${\sf M} \subset {\cal A}^m$ be a bilateral ${\Cal A}$-module.
$F \subset {\sf M}$ will be called 

\begin{itemize}
\item a bilateral   {\em Gr\"obner
basis}
of  ${\sf M}$  if $F$ satisfies 
$${\bf M}\{{\sf M}\}= {\bf M}_2({\sf M})={\bf M}\{{\Bbb I}_2({\bf M}_2(F))\} ={\bf M}\{{\Bbb I}_2({\bf M}\{F\})\} ={\Bbb I}_2({\bf M}\{F\})\cap{\sf M}({\Cal A}^m),$$
{\em id est} if it satisfies the following condition:
\begin{itemize}
\item for each $f\in{\sf M},$ there are
$g_i\in F,$  
$\lambda_i,\rho_i\in {\cal B},
a_i\in R_{\lambda_i}\setminus\{0\}, b_i\in R_{\rho_i}\setminus\{0\}$ 
such that
\begin{itemize}
\renewcommand\labelitemiii{\bf --}
\item ${\bf T}(f) = \lambda_i\circ{\bf T}(g_i)\circ\rho_i$ for all $i$,
\item ${\bf M}(f)  = \sum_i a_i\lambda_i\ast {\bf M}(g_i)\ast  b_i\rho_i;$
\end{itemize}
\end{itemize}
\item a 
bilateral {\em strong Gr\"obner
basis} of ${\sf M}$ if 
it satisfies 
the following equivalent conditions:
\begin{itemize}
\item for each $f\in{\sf M}$ there is $g\in F$ such that
${\bf M}(g) \mid_2 {\bf M}(f),$ 
\item for each $f\in{\sf M}$ there are $g\in F,$  $a\in R_{\lambda}\setminus\{0\}, b\in R_{\rho}\setminus\{0\}, \lambda,\rho\in {\Cal B}$ such that
${\bf M}(f) =  a\lambda\ast {\bf M}(g)\ast   b\rho 
=  {\bf M}(a\lambda\star g\star  b\rho),$
\item ${\bf M}\{{\sf M}\} = {\bf M}_2({\sf M}) = {\bf M}_2(F)$.
\end{itemize}
\end{itemize}\end{Definition}

\begin{Definition} 
Let ${\sf M} \subset {\cal A}^m$ be a  bilateral ${\Cal A}$-module and
$F \subset {\sf M}$.
We say that $f\in{\cal A}^m\setminus\{0\}$ has 
\begin{itemize}
\item a bilateral (weak)  {\em Gr\"obner representation}
in terms of $F$ if it can be written as
$f = \sum_{i=1}^\mu a_i \lambda_i\star   g_i\star b_i \rho_i$, 
with $\lambda_i, \rho_i\in {\cal B}, a_i\in R_{\lambda_i}\setminus\{0\}, b_i\in R_{\rho_i}\setminus\{0\},g_i \in F,$ and
${\bf T}(f)\geq\lambda_i\circ{\bf T}(g_i)\circ \rho_i \Forall i;$
\item a bilateral {\em strong Gr\"obner representation} in terms
of $F$ if it can be written as 
$f = \sum_{i=1}^\mu a_i \lambda_i \star  g_i \star b_i\rho_i, $
with $\lambda_i, \rho_i\in {\cal B}, a_i\in R_{\lambda_i}\setminus\{0\}, b_i\in R_{\rho_i}\setminus\{0\}, g_i \in F,$ and
${\bf T}(f) =  \lambda_1\circ  {\bf T}(g_1)\circ\rho_1  > \lambda_i\circ  {\bf T}(g_i)\circ \rho_i \Forall i, 1 < i \leq \mu.$
\end{itemize}
For $f\in{\Cal A}^m\setminus\{0\}, F \subset{\Cal A}^m$, an element 
$g:=\NF(f, F)\in {\Cal A}^m$ is called a \index{form!normal}
\begin{itemize}
\item bilateral {\em (weak) normal form}  
of $f$ w.r.t. $F$, if
\begin{description}
\item[] $f - g\in{\Bbb I}_2(F)$ has a weak Gr\"obner representation wrt $F$ and 
\item[] $g \neq 0 \then {\bf M}(g) \notin{\bf M}\{{\Bbb I}_2({\bf M}\{F\})\};$
\end{description} 
\item bilateral  {\em strong normal form}  
of $f$ w.r.t. $F$, if
\begin{description}
\item[] $f - g\in{\Bbb I}_2(F)$ has a strong Gr\"obner representation  wrt $F$ and 
\item[] $g \neq 0 \then {\bf M}(g) \notin{\bf M}_2(F).$
\end{description}\end{itemize}
\end{Definition}

\begin{Remark}\label{50cRem6} 
As we noted above, ${\Cal G}:=G({\Cal A})$ and ${\Cal A}$, while coinciding as sets, do not necessarily coincide as rings nor as $R$-modules; thus
in general for $\lambda,\rho\in{\Cal B}, a\in R_{\lambda}\setminus\{0\}, b\in R_{\rho}\setminus\{0\}$ and $g\in{\Cal A}^m$,  $g={\bf M}(g)+p$, we don't have
$a\lambda\star{\bf M}(g)\star b\rho=a\lambda\ast{\bf M}(g)\ast b\rho$ but we could have
$$\tail(a\lambda\star{\bf M}(g)\star b\rho):=a\lambda\star{\bf M}(g)\star b\rho-a\lambda\ast{\bf M}(g)\ast b\rho \neq 0.$$

In such case, of course,
${\bf T}(\tail(a\lambda\star{\bf M}(g)\star b\rho)) < {\bf T}(a\lambda\star{\bf M}(g)\star b\rho);$
more exactly, either 
\begin{itemize}
\item $\lambda\circ{\bf T}(g)\circ\rho\in{\Cal B}^{(m)}$ in which case
$${\bf M}(a\lambda\star{\bf M}(g)\star b\rho) = a\lambda\ast{\bf M}(g)\ast b\rho$$ and
$a\lambda\star{\bf M}(g)\star b\rho = {\bf M}(a\lambda\star{\bf M}(g)\star b\rho)+\tail(a\lambda\star{\bf M}(g)\star b\rho);$
\item or $\lambda\circ{\bf T}(g)\circ\rho\in\Gamma^{(m)}\setminus{\Cal B}^{(m)}$ in which case
$$a\lambda\ast{\bf M}(g)\ast b\rho={\bf M}(a\lambda\star{\bf M}(g)\star b\rho) = 0 \And
a\lambda\star{\bf M}(g)\star b\rho =\tail(a\lambda\star{\bf M}(g)\star b\rho);$$ 
\end{itemize}
in both cases we have 
\begin{eqnarray*}
a\lambda\star g\star b\rho -a\lambda\ast{\bf M}(g)\ast b\rho 
&=&
a\lambda\star{\bf M}(g)\star b\rho -a\lambda\ast{\bf M}(g)\ast b\rho+a\lambda\star p\star b \rho
\\ &=&
\tail(a\lambda\star{\bf M}(g)\star b\rho) + a\lambda\star p \star b\rho=: h,
\end{eqnarray*}
with  
${\bf T}(h) < \lambda\circ {\bf T}(g)\circ\rho.$
\qed\end{Remark}

\begin{Lemma} Let $f\in{\Cal A}^m$; then for each
$g_i\in {\Cal A}^m$, 
$\lambda_i,\rho_i\in {\cal B},
a_i\in R_{\lambda_i}\setminus\{0\}, b_i\in R_{\rho_i}\setminus\{0\}$ 
which satisfy
\begin{itemize}
\item ${\bf T}(f) = \lambda_i\circ{\bf T}(g_i)\circ\rho_i,$ for each $i$,
\end{itemize}
the following are equivalent
\begin{enumerate}
\item ${\bf M}(f)  = \sum_i{\bf M}(a_i\lambda_i\star g_i \star  b_i\rho_i),$
\item ${\bf M}(f)  = \sum_i a_i\lambda_i\ast{\bf M}(g_i)\ast  b_i\rho_i,$
\item ${\bf T}\left(f -\sum_i a_i\lambda_i\star g_i \star  b_i\rho_i\right)<{\bf T}(f).$
\end{enumerate}\end{Lemma}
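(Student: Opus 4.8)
The plan is to prove the chain of equivalences by a small cycle, say (2) $\Rightarrow$ (1) $\Rightarrow$ (3) $\Rightarrow$ (2), using throughout the decomposition recalled in Remark~\ref{50cRem6}, namely that for each $i$ one has
$$a_i\lambda_i\star g_i \star b_i\rho_i = a_i\lambda_i\ast{\bf M}(g_i)\ast b_i\rho_i + h_i,$$
where $h_i := \tail(a_i\lambda_i\star{\bf M}(g_i)\star b_i\rho_i) + a_i\lambda_i\star p_i\star b_i\rho_i$ (with $g_i = {\bf M}(g_i)+p_i$) satisfies the strict estimate ${\bf T}(h_i) < \lambda_i\circ{\bf T}(g_i)\circ\rho_i = {\bf T}(f)$. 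Summing over $i$ gives the single identity
$$\sum_i a_i\lambda_i\star g_i\star b_i\rho_i = \sum_i a_i\lambda_i\ast{\bf M}(g_i)\ast b_i\rho_i + h,\qquad h := \sum_i h_i,$$
and by property~(1) of the pseudovaluation (subadditivity under subtraction, hence under finite sums) we get ${\bf T}(h)\leq\max_i{\bf T}(h_i) < {\bf T}(f)$. This single displayed identity is the engine behind every implication.

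First, the equivalence of (1) and (2) is essentially bookkeeping: in each summand $a_i\lambda_i\star g_i\star b_i\rho_i$, the term $a_i\lambda_i\ast{\bf M}(g_i)\ast b_i\rho_i$ is precisely ${\bf M}(a_i\lambda_i\star g_i\star b_i\rho_i)$ \emph{provided} that summand is nonzero in ${\Cal A}^m$, i.e. provided $\lambda_i\circ{\bf T}(g_i)\circ\rho_i\in{\Cal B}^{(m)}$; the two bulleted cases of Remark~\ref{50cRem6} handle exactly this, and in the degenerate case $\lambda_i\circ{\bf T}(g_i)\circ\rho_i\notin{\Cal B}^{(m)}$ both ${\bf M}(a_i\lambda_i\star g_i\star b_i\rho_i)$ and $a_i\lambda_i\ast{\bf M}(g_i)\ast b_i\rho_i$ vanish, so the corresponding summands agree on both sides. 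Hence $\sum_i{\bf M}(a_i\lambda_i\star g_i\star b_i\rho_i) = \sum_i a_i\lambda_i\ast{\bf M}(g_i)\ast b_i\rho_i$ unconditionally, and (1) $\iff$ (2) follows. For (2) $\Rightarrow$ (3): rewrite
$$f - \sum_i a_i\lambda_i\star g_i\star b_i\rho_i = \bigl(f - \sum_i a_i\lambda_i\ast{\bf M}(g_i)\ast b_i\rho_i\bigr) - h = \bigl({\bf M}(f) - {\bf M}(f)\bigr) + \tail(f) - h,$$
where $\tail(f) = f - {\bf M}(f)$ has ${\bf T}(\tail(f)) < {\bf T}(f)$ by definition and ${\bf T}(h) < {\bf T}(f)$ as above, so ${\bf T}\bigl(f - \sum_i a_i\lambda_i\star g_i\star b_i\rho_i\bigr)\leq\max({\bf T}(\tail(f)),{\bf T}(h)) < {\bf T}(f)$, which is~(3).

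Finally, for (3) $\Rightarrow$ (2): since ${\bf T}\bigl(f - \sum_i a_i\lambda_i\star g_i\star b_i\rho_i\bigr) < {\bf T}(f)$, passing to ${\Cal L}(\cdot)={\bf M}(\cdot)$ we get that ${\bf M}(f)$ equals the degree-${\bf T}(f)$ homogeneous component of $\sum_i a_i\lambda_i\star g_i\star b_i\rho_i$; but by the engine identity that component is the degree-${\bf T}(f)$ part of $\sum_i a_i\lambda_i\ast{\bf M}(g_i)\ast b_i\rho_i$ (since $h$ contributes only in lower degree), and because each $\lambda_i\circ{\bf T}(g_i)\circ\rho_i = {\bf T}(f)$ exactly, each nonzero $a_i\lambda_i\ast{\bf M}(g_i)\ast b_i\rho_i$ already \emph{is} homogeneous of degree ${\bf T}(f)$ in ${\Cal G}={\Cal A}$ (as sets); hence ${\bf M}(f) = \sum_i a_i\lambda_i\ast{\bf M}(g_i)\ast b_i\rho_i$, which is~(2). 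The one point requiring care — and the place I expect the only genuine friction — is the interplay between the $\star$-product on ${\Cal A}$ and the $\ast$-product on ${\Cal G}$ when some $\lambda_i\circ{\bf T}(g_i)\circ\rho_i$ falls outside ${\Cal B}^{(m)}$: one must check that such summands are harmless on \emph{both} sides of every identity, which is exactly the content of the second bullet of Remark~\ref{50cRem6} and should be invoked explicitly rather than glossed over.
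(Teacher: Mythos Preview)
Your argument is correct and is essentially the paper's own proof: both hinge on Remark~\ref{50cRem6} to get the termwise identity ${\bf M}(a_i\lambda_i\star g_i\star b_i\rho_i)=a_i\lambda_i\ast{\bf M}(g_i)\ast b_i\rho_i$ (whence $(1)\iff(2)$), and then on the single decomposition
\[
f-\sum_i a_i\lambda_i\star g_i\star b_i\rho_i=\Bigl({\bf M}(f)-\sum_i a_i\lambda_i\ast{\bf M}(g_i)\ast b_i\rho_i\Bigr)+(\text{terms of }{\bf T}<{\bf T}(f))
\]
to pass between $(2)$ and $(3)$. One small simplification: the ``friction'' you anticipate never materializes, because the hypothesis ${\bf T}(f)=\lambda_i\circ{\bf T}(g_i)\circ\rho_i$ forces $\lambda_i\circ{\bf T}(g_i)\circ\rho_i={\bf T}(f)\in{\Cal B}^{(m)}$ for every $i$, so the second bullet of Remark~\ref{50cRem6} is vacuous here and need not be invoked.
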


\begin{proof} Remark that the assumption
${\bf T}(f) = \lambda_i\circ{\bf T}(g_i)\circ\rho_i,$ for each $i$, grants, according Remark~\ref{50cRem6}, the equivalence $(1) \iff (2)$.

Moreover, denoting 
$q := f-{\bf M}(f)$, $p_i := g_i-{\bf M}(g_i)$,
$$h_i := a_i\lambda_i\star g_i \star  b_i\rho_i-a_i\lambda_i\ast{\bf M}(g_i)\ast  b_i\rho_i=
\tail(a_i\lambda_i\star{\bf M}(g_i)\star  b_i\rho_i)+a_i\lambda_i\star p_i \star  b_i\rho_i$$
and $h := q-\sum_i h_i$
 we have
\begin{eqnarray*}
f -\sum_i a_i\lambda_i\star g_i \star  b_i\rho_i 
&=&
{\bf M}(f) + q -  \sum_ia_i\lambda_i\ast{\bf M}(g_i)\ast  b_i\rho_i -
\sum_i h_i 
\\ &=&
{\bf M}(f) -  \sum_ia_i\lambda_i\ast{\bf M}(g_i)\ast  b_i\rho_i  +
 h.
\end{eqnarray*}

Thus, setting $\tau := {\bf T}(f) =  \lambda_i\circ{\bf T}(g_i)\circ\rho_i\in{\Cal B}^{(m)}$,
we have ${\bf T}(q)<\tau$ and ${\bf T}(h_i)<\tau$  for each $i$, so that 
 ${\bf T}(h)<\tau$.

Therefore 
${\bf M}(f) = \sum_ia_i\lambda_i\ast{\bf M}(g_i)\ast  b_i\rho_i =
 \sum_i{\bf M}(a_i\lambda_i\star g_i \star  b_i\rho_i)$ implies
 $$f -\sum_i a_i\lambda_i\star g_i \star  b_i\rho_i = h$$ so that
  ${\bf T}\left(f -\sum_i a_i\lambda_i\star g_i \star  b_i\rho_i\right) = {\bf T}(h) <{\bf T}(f)$
 proving $(2) \then (3)$.

Conversely,
$${\bf T}\left(f -\sum_i a_i\lambda_i\star g_i \star  b_i\rho_i\right) < {\bf T}(f)
\then
{\bf M}(f) - \sum_ia_i\lambda_i\ast{\bf M}(g_i)\ast  b_i\rho_i = 0.$$
\end{proof}

\begin{Theorem}\label{50cP1bil} 
For any set $F\subset {\Cal A}^m\setminus\{0\}$, among the
following
conditions:
\begin{enumerate}
\item $f\in{\Bbb I}_2(F) \iff $ it has a bilateral  strong Gr\"obner representation 
$$f = \sum_{i=1}^\mu a_i \lambda_i \star  g_i \star b_i\rho_i$$  
in terms of $F$ which further satisfies
$${\bf T}(f) =  \lambda_1\circ{\bf T}(g_1)\circ\rho_1 \And
\lambda_i\circ{\bf T}(g_i)\circ\rho_i> \lambda_{i+1}\circ{\bf T}(g_{i+1})\circ\rho_{i+1} \Forall i;$$
\item $f\in{\Bbb I}_2(F) \iff $ it has a bilateral  strong Gr\"obner representation in terms of $F$;
\item $F$ is a bilateral strong Gr\"obner basis of ${\Bbb I}_2(F)$; 
\item for each $f\in{\Cal A}^m\setminus\{0\}$ and any bilateral strong normal form $h$ of $f$ w.r.t. $F$ we have
$f\in{\Bbb I}_2(F) \iff h = 0;$ 
\item $f\in{\Bbb I}_2(F) \iff $ it has a bilateral  weak Gr\"obner representation in terms of $F$;
\item $F$ is a bilateral weak Gr\"obner basis of ${\Bbb I}_2(F)$;
\item for each $f\in{\Cal A}^m\setminus\{0\}$ and any bilateral weak normal form $h$ of $f$ w.r.t. $F$ we have
$f\in{\Bbb I}_2(F) \iff h = 0;$ 
\end{enumerate}
there are the implications
$$
\begin{array}{rcccccl}
(1)&\iff&(2)&\iff&(3)&\iff&(4) \\
&&\Downarrow&&\Downarrow&&\Downarrow\cr
&&(5)&\iff&(6)&\iff& (7)\cr\end{array}$$
If 
$R$ is a skew field
we have also the implication $(5) \then (2)$ and as a consequence the seven conditions are equivalent.
\end{Theorem}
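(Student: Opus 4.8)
The plan is to establish the implications in the standard ``Buchberger-style'' cyclic fashion, then handle the skew-field addendum separately. First I would prove $(1)\then(2)\then(5)$ trivially, since a strong representation with strictly decreasing leading terms is in particular a strong representation, and a strong representation is a weak one. The reverse direction $(2)\then(1)$ is the key \emph{rewriting} step: given a strong representation $f=\sum a_i\lambda_i\star g_i\star b_i\rho_i$ whose leading-term multidegrees $\lambda_i\circ{\bf T}(g_i)\circ\rho_i$ are not strictly ordered, one reorders so the largest multidegree $\tau$ comes first; if several summands share multidegree $\tau$ but their leading monomials already sum to ${\bf M}(f)$ we are done with the top term, and the remainder $f-\sum_{\text{top }i}a_i\lambda_i\star g_i\star b_i\rho_i$ has strictly smaller ${\bf T}$ by the Lemma just proved, so we induct on ${\bf T}(f)$ (well-ordering of $\Gamma^{(m)}$). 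If instead the shared-multidegree monomials cancel below ${\bf M}(f)$, we still get strict drop in ${\bf T}$ and recurse. This is the one genuine argument; everything else is bookkeeping.

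Next I would tie $(1)$--$(2)$ to $(3)$ and $(4)$. That $F$ is a strong Gr\"obner basis of ${\Bbb I}_2(F)$ (condition $(3)$) is, by Definition~\ref{50cD1}, the assertion that every nonzero $f\in{\Bbb I}_2(F)$ has ${\bf M}(g)\mid_2{\bf M}(f)$ for some $g\in F$; given a strong representation this is immediate from its top term, giving $(2)\then(3)$, and conversely $(3)\then(2)$ is again the inductive rewriting: peel off $a\lambda\ast{\bf M}(g)\ast b\rho={\bf M}(f)$, pass to $f-a\lambda\star g\star b\rho$ which lies in ${\Bbb I}_2(F)$ and has smaller ${\bf T}$ by Remark~\ref{50cRem6}, recurse. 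For $(3)\iff(4)$: if $F$ is a strong Gr\"obner basis, any strong normal form $h$ of $f$ has ${\bf M}(h)\notin{\bf M}_2(F)={\bf M}\{{\Bbb I}_2({\bf M}\{F\})\}\supset{\bf M}\{{\Bbb I}_2(F)\}$ when $h\neq0$, yet $h\in{\Bbb I}_2(F)$ would force the opposite, so $h=0$; conversely if every $f\in{\Bbb I}_2(F)$ reduces to $0$, unwinding the reduction steps produces a strong representation, hence $(3)$. The weak column $(5)\iff(6)\iff(7)$ is the verbatim analogue with ${\bf M}$-sums in place of single-term divisibility, using the first part of Definition~\ref{50cD1} and the equivalence $(1)\iff(2)$ of the Lemma.

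The vertical implications $(2)\then(5)$, $(3)\then(6)$, $(4)\then(7)$ hold because a strong representation/basis/normal-form is \emph{a fortiori} a weak one (a single-term divisibility ${\bf M}(g)\mid_2{\bf M}(f)$ is the one-summand case of the $\sum_i a_i\lambda_i\ast{\bf M}(g_i)\ast b_i\rho_i$ condition). Finally, for the skew-field addendum: when $R$ is a skew field, every $R_\lambda$ is either $\{0\}$ or all of $R$, and for $\lambda\in{\Cal B}$ we have $R_\lambda\neq\{0\}$, so each coefficient $a_i\in R_{\lambda_i}\setminus\{0\}$ is invertible. Then in a weak representation $f=\sum a_i\lambda_i\star g_i\star b_i\rho_i$ with ${\bf T}(f)\geq\lambda_i\circ{\bf T}(g_i)\circ\rho_i$, collect the summands achieving the maximal multidegree $\tau={\bf T}(f)$: the corresponding monomials $a_i\lambda_i\ast{\bf M}(g_i)\ast b_i\rho_i$ are each nonzero (invertibility, plus $\tau\in{\Cal B}^{(m)}$ so no collapse via Remark~\ref{50cRem6}), and since over a skew field a single such monomial already realizes ${\bf M}(f)$ after scaling, one extracts a strong $\mid_2$-divisor of ${\bf M}(f)$; subtracting and inducting on ${\bf T}$ upgrades the weak representation to a strong one, giving $(5)\then(2)$ and collapsing the diagram. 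The main obstacle is the $(2)\then(1)$/$(3)\then(2)$ rewriting argument --- making precise that repeatedly subtracting top terms terminates (well-ordering) and that the coefficient arithmetic in the possibly-non-domain rings $R_\lambda$, with the $\star$ versus $\ast$ discrepancy of Remark~\ref{50cRem6}, never obstructs the bookkeeping; the skew-field step is then routine because invertibility removes exactly that coefficient subtlety.
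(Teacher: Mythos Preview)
Your overall strategy matches the paper's: trivial forward implications, then close the cycles by induction on ${\bf T}(f)$ via well-ordering of ${\Cal B}^{(m)}$, using the $\star$/$\ast$ bookkeeping of Remark~\ref{50cRem6}. The paper organizes the strong row as $(1)\then(2)\then(3)$ trivial and $(3)\then(1)$ by peeling off a single divisor of ${\bf M}(f)$, then $(3)\then(4)\then(2)$; your $(2)\iff(3)$ and $(3)\iff(4)$ are the same arguments rearranged, and your skew-field step $(5)\then(2)$ is exactly the paper's (pick one summand with $\lambda\circ{\bf T}(g)\circ\rho={\bf T}(f)$, scale by $\lc(f)d^{-1}$ to hit ${\bf M}(f)$, which is really $(5)\then(3)$).

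Your sketch of $(2)\then(1)$ is confused, however. In a \emph{strong} representation, by definition exactly one summand (the first) achieves multidegree ${\bf T}(f)$; your discussion of ``several summands share multidegree $\tau$'' and whether their leading monomials ``sum to ${\bf M}(f)$'' or ``cancel below ${\bf M}(f)$'' describes the \emph{weak} situation and simply cannot occur at the top of a strong representation. The correct $(2)\then(1)$ argument is simpler than you suggest: subtract the unique top summand, note the remainder $f'\in{\Bbb I}_2(F)$ has ${\bf T}(f')<{\bf T}(f)$, apply $(2)$ to $f'$ to obtain a \emph{fresh} strong representation of $f'$ (not the tail of the old one, which need not be strong), and invoke the inductive hypothesis on ${\bf T}$ to make that strictly decreasing; then prepend. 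The paper sidesteps this entirely by proving $(3)\then(1)$ instead, which is cleaner since $(3)$ directly hands you a single $g$ with ${\bf M}(g)\mid_2{\bf M}(f)$. Either way this is a routine well-ordering induction, not ``the main obstacle'' you flag; your concern about coefficient arithmetic in the rings $R_\lambda$ is likewise overstated, as Remark~\ref{50cRem6} resolves the $\star$ versus $\ast$ discrepancy uniformly without any case analysis.
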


\begin{proof} The implications  $(1) \then (2) \then (3)$, $(5) \then (6)$, $(2) \then (5)$, $(3) \then (6)$  and $(4) \then (7)$ are trivial. 

Ad $(3) \then (1)$:
for each $f\in{\Bbb I}_2(F)$ by assumption there are 
elements
$g\in F$,
$\lambda,\rho\in{\cal B},a\in R_\lambda\setminus\{0\}, b\in R_\rho\setminus\{0\}$, such that
$${\bf T}(f) = \lambda\circ{\bf T}(g)\circ\rho \And
{\bf M}(f) =a\lambda\ast {\bf M}(g)  \ast b\rho.$$

Thus ${\bf M}(a\lambda\star {\bf M}(g)  \star b\rho)= a\lambda\ast {\bf M}(g)  \ast b\rho = {\bf M}(f)$
and
denoting, for $f = {\bf M}(f)+q$ and $g = {\bf M}(g)+p$,
$$f_1 := f - a\lambda\star g  \star b\rho
=q-\tail(a\lambda\star{\bf M}(g)  \star b\rho) - a\lambda\star p  \star b\rho$$
we have
${\bf T}(f_1) < {\bf T}(f)$ so the claim follows 
by induction, since ${\Cal B}^{(m)}$ is well-ordered by $<$.

Ad $(6) \then (5)$: similarly, for each $f\in{\Bbb I}_2(F)$ by assumption there
are  elements
$g_i\in F$, 
$\lambda_i,\rho_i\in {\cal B},
a_i\in R_{\lambda_i}\setminus\{0\}, b_i\in R_{\rho_i}\setminus\{0\}$ 
such that
\begin{itemize}
\item ${\bf T}(f) = \lambda_i\circ{\bf T}(g_i)\circ\rho_i$ for all $i$,
\item ${\bf M}(f)  = \sum_i a_i\lambda_i\ast {\bf M}(g_i)\ast  b_i\rho_i.$
\end{itemize}

Thus ${\bf T}(f -\sum_i a_i\lambda_i\star g_i \star  b_i\rho_i)<{\bf T}(f)$ and 
it is then sufficient to denote 
$f_1 := f -\sum_i a_i\lambda_i\star g_i \star  b_i\rho_i$ 
in order to deduce the claim 
by induction.

Ad $(3) \then (4)$ and $(6) \then (7)$: either
\begin{itemize}
\item $h=0$ and $f=f-h\in{\Bbb I}_2(F)$ or
\item $h\neq 0$, ${\bf M}(h) \notin {\bf M}({\Bbb I}_2(F))$, $h\notin{\Bbb I}_2(F)$ and $f\notin{\Bbb I}_2(F)$.
\end{itemize}

Ad  $(4) \then (2)$ and $(7) \then (5)$: for each $f\in{\Bbb I}_2(F)$, its normal form is  $h=0$ and
$f = f-h$ has a strong (resp.: weak) Gr\"obner representation in
terms of $F$.

Ad $(5) \then (2)$: let $f\in{\Bbb I}_2(F)\setminus\{0\};$ since $R$ is a skew field, (5) implies the existence of elements 
$g\in F$, 
$\lambda,\rho\in {\cal B},$
such that
${\bf T}(f) = \lambda\circ{\bf T}(g)\circ\rho =: \tau;$
thus denoting $d\in R\setminus\{0\}$ the value 
which satisfies
$$d\tau = {\bf M}(\lambda\star g \star \rho) = \lambda\ast {\bf M}(g)\ast\rho,$$
we have
$${\bf M}(f) =   
\lc(f)d^{-1}d \tau  = 
\lc(f)d^{-1}\lambda\ast{\bf M}(g)\ast\rho=
{\bf M}\left((\lc(f)d^{-1}\lambda)\star g\star \rho\right)$$
as required.
\end{proof}

\section {Weispfenning multiplication}

In proposing a Buchberger Theory for a class of Ore-like rings, {\em id est} Weispfenning rings \cite{W}, \cite[IV.49.11,IV.50.13.6]{SPES}
${\Bbb Q}\langle x,Y\rangle/{\Bbb I}(Yx-x^eY), e\in{\Bbb N}, e>1$, Weispfenning considered, given a basis $F$, the restricted
module $${\Bbb I}_W(F) := \Span_{\Bbb Q}\{x^a fY^b, (a,b)\in{\Bbb N}^2\}$$ and computed a restricted Gr\"obner bases $G$ which grants to each element $f\in{\Bbb I}_W(F)$ a restricted Gr\"obner representation
$$f=\sum_{i=1}^\mu c_ix^{a_i}g_iY^{b_i} : \deg_Y(f)\geq\deg_Y(g_i)+b_i, c_i\in {\Bbb Q},  (a_i,b_i)\in{\Bbb N}^2, g_i\in G,$$ 
to be extended, in a second step, to the required basis by an adaptation of Kandri-Rody--- Weispfenning completion \cite{KrW}\cite[IV.49.5.2]{SPES}.

We can interpret this construction as a multiplication on the monomial set 
$${\sf M}({\Cal A}):=\{ct : t\in{\Cal B},c\in R_t\setminus\{0\}\}$$ which becomes, by distribution, a multiplication in  ${\Cal A}$.

\begin{Definition} Setting, for each $m_1=a_1\tau_1,m_2=a_2\tau_2\in{\sf M}({\Cal A})$
$$m_1\diamond m_2 := \left(a_1a_2\right)\left(\tau_2\circ\tau_1\right)$$
{\em Weispfenning multiplication} is the associative multiplication
$$\diamond : {\Cal A}\times{\Cal A}\to{\Cal A}$$ defined as $$f\diamond g=\sum_{\tau\in\supp(f)}\sum_{\omega\in\supp(g)} m_\tau\diamond n_\omega =
\sum_{\tau\in\supp(f)}\sum_{\omega\in\supp(g)} c(f,\tau)c(g,\omega)\omega\tau$$
for each $f = \sum_{\tau\in\supp(f)} m_\tau,  m_\tau = c(f,\tau)\tau$ and $g = \sum_{\omega\in\supp(g)} n_\omega,  n_\omega = c(g,\omega)\omega$.
\end{Definition}

Note that $\diamond$ is commutative when ${\Cal A}$ is a twisted monoid ring $R[{\sf S}]$ over a commutative ring $R$ and a commutative monoid ${\sf S}$, as polynomial rings, solvable polynomial rings \cite{KrW,Kr},\cite[IV.49.5]{SPES}, multivariate Ore extensions \cite{P1,P2,BGV,labOrE} \ldots.

The intuition of Weispfenning can be formulated by remarking that its effect is to transform a bilateral problem into a left one.
Thus the construction proposed in \cite{W} simply reformulates the one stated in \cite{KrW}; in an analogous way the reformulation of the (commutaive) Gebauer--M\"oller criteria \cite{GM} for detecting useless S-pairs was easily performed in \cite{labOrE} in the context of multivariate Ore extensions by means of Weispfenning multiplication.

Our aim is therefore to apply $\diamond$ to reduce the computation of Gebauer--M\"oller sets for the bilateral case to the trivial right case where efficient solutions are already available \cite{M},\cite[IV.47.2.3]{SPES}.

We note that  Weispfenning construction is a smoother special case of the construction proposed by Pritchard \cite{Pr1,Pr2},\cite[IV.47.5]{SPES} for reformulating bilateral modules in ${\Bbb D}\langle {\Bcc X}\rangle$ as left modules in a monoid ring
${\Bbb D}[\langle{\Bcc X}\rangle^\star]$ where the monoid $\langle{\Bcc X}\rangle^\star$ is properly defined in terms of
$\langle{\Bcc X}\rangle$.

\section{Restricted Gr\"obner bases}

Following Weispfenning's intuition \cite{W} we further denote
\begin{itemize}
\item  ${\Bbb I}_W(F)\subset{\Cal A}^m$ the {\em restricted}
module generated by $F$,
\begin{eqnarray*}
{\Bbb I}_W(F) &:=& \Span_{R}(af\star \rho   : a\in
R\setminus\{0\}, \rho\in {\Cal B}, f\in F),\\
&=& \Span_{R}(m\diamond f : m\in{\sf M}({\Cal A}^m), f\in F\},
\end{eqnarray*}
\item ${\bf T}_W(F) := {\Bbb I}_R({\bf T}\{F\}) =
\{{\bf T}(f\star\rho)  : \rho\in {\Cal B}, f\in F\}
= 
\{{\bf T}(f)\circ\rho  : \rho\in {\Cal B}, f\in F\}\subset {\Cal B}^{(m)};$ 
\item ${\bf M}_W(F) := \{{\bf M}(a f\star \rho)  : a\in
R\setminus\{0\},\rho\in {\Cal B}, f\in F\}
= \{a {\bf M}(f)\ast \rho : a\in
R\setminus\{0\},\rho\in {\Cal B}, f\in F\}
= \{m\diamond {\bf M}(f) : m\in{\sf M}({\Cal A}^m), f\in F\}\subset  {\sf M}({\Cal A}^m).$
\end{itemize}

\begin{Definition}  
Let ${\sf M} \subset {\cal A}^m$ be a retricted ${\Cal A}$-module.
$F \subset {\sf M}$ will be called 

\begin{itemize}
\item a restricted  {\em Gr\"obner
basis}
of  ${\sf M}$  if $F$ satisfies 
$${\bf M}\{{\sf M}\}= {\bf M}_W({\sf M}) ={\bf M}\{{\Bbb I}_W({\bf M}_W(F))\}={\bf M}\{{\Bbb I}_W({\bf M}\{F\})\} ={\Bbb I}_W({\bf M}\{F\})\cap{\sf M}({\Cal A}^m),$$
{\em id est} if it satisfies the following condition:
\begin{itemize}
\item for each $f\in{\sf M},$ there are
$g_i\in F$, 
$\rho_i\in {\cal B},
a_i\in R\setminus\{0\}$ 
such that
\begin{itemize}
\renewcommand\labelitemiii{\bf --}
\item ${\bf T}(f) = {\bf T}(g_i)\circ\rho_i$ for all $i$,
\item ${\bf M}(f)  = \sum_i a_i{\bf M}(g_i)\ast\rho_i = \sum_i a_i\rho_i\diamond {\bf M}(g_i);$
\end{itemize}
\end{itemize}
\item a 
restricted {\em strong Gr\"obner
basis} of ${\sf M}$ if 
it satisfies 
the following equivalent conditions:
\begin{itemize}
\item for each $f\in{\sf M}$ there is $g\in F$ such that
${\bf M}(g) \mid_W {\bf M}(f),$ 
\item for each $f\in{\sf M}$ there are $g\in F$, $a\in
R\setminus\{0\}$, $\ \rho\in {\Cal B}$ such that
$${\bf M}(f) =  a{\bf M}(g)\ast \rho 
=  {\bf M}(ag\star \rho)=  {\bf M}(a\rho\diamond g),$$
\item ${\bf M}\{{\sf M}\} = {\bf M}_W({\sf M}) = {\bf M}_W(F)$.
\end{itemize}
\end{itemize}\end{Definition}

\begin{Definition} 
Let ${\sf M} \subset {\cal A}^m$ be a  restricted ${\Cal A}$-module and
$F \subset {\sf M}$.
We say that $f\in{\cal A}^m\setminus\{0\}$ has 
\begin{itemize}
\item a restricted (weak)  {\em Gr\"obner representation}
in terms of $F$ if it can be written as
$f = \sum_{i=1}^\mu a_i   g_i\star  \rho_i = \sum_{i=1}^\mu a_i\rho_i\diamond g_i$, 
with $\rho_i\in {\cal B}, a_i\in R\setminus\{0\},g_i \in F,$ and
${\bf T}(f)\geq{\bf T}(g_i)\circ \rho_i \Forall i;$
\item a restricted {\em strong Gr\"obner representation} in terms
of $F$ if it can be written as 
$f = \sum_{i=1}^\mu a_i g_i\star  \rho_i = \sum_{i=1}^\mu a_i\rho_i\diamond g_i$, 
with $\rho_i\in {\cal B}, a_i\in R\setminus\{0\},g_i \in F,$ and
${\bf T}(f) =  {\bf T}(g_1)\circ\rho_1  > {\bf T}(g_i)\circ \rho_i \Forall i, 1 < i \leq \mu.$
\end{itemize}
For $f\in{\Cal A}^m\setminus\{0\}, F \subset{\Cal A}^m$, an element 
$g:=\NF(f, F)\in {\Cal A}^m$ is called a 
\begin{itemize}
\item restricted {\em (weak) normal form}  
of $f$ w.r.t. $F$, if
\begin{description}
\item[] $f - g\in{\Bbb I}_W(F)$ has a restricted weak Gr\"obner representation wrt $F$, and 
\item[] $g \neq 0 \then {\bf M}(g) \notin{\bf M}\{{\Bbb I}_W({\bf M}\{F\})\};$
\end{description} 
\item restricted {\em strong normal form}  
of $f$ w.r.t. $F$, if
\begin{description}
\item[] $f - g\in{\Bbb I}_W(F)$ has a restricted strong Gr\"obner representation wrt $F$, and 
\item[] $g \neq 0 \then {\bf M}(g) \notin{\bf M}_W(F).$
\end{description}\end{itemize}
\end{Definition}

\begin{Lemma} Let $f\in{\Cal A}^m$; then for each
$g_i\in {\Cal A}^m$, 
$\rho_i\in {\cal B},
a_i\in R\setminus\{0\}$ 
which satisfy
\begin{itemize}
\item ${\bf T}(f) = {\bf T}(g_i)\circ\rho_i,$ for each $i$,
\end{itemize}
the following are equivalent
\begin{enumerate}
\item ${\bf M}(f)  
= \sum_i{\bf M}(a_i \rho_i\diamond g_i),$
\item ${\bf M}(f)  = \sum_i a_i {\bf M}(g_i)\ast  \rho_i ,$
\item ${\bf T}\left(f -\sum_i a_i \rho_i\diamond g_i\right)<{\bf T}(f).$
\end{enumerate}\end{Lemma}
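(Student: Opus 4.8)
The plan is to prove this Lemma by mirroring exactly the proof of the earlier bilateral Lemma (the one with $a_i\lambda_i\star g_i\star b_i\rho_i$), specialized to the restricted setting where the left factors $\lambda_i$ and the right coefficients $b_i$ are trivial. The structure of the argument is: first establish the equivalence $(1)\iff(2)$ using Remark~\ref{50cRem6}; then show $(2)\then(3)$ by an explicit computation of the tail terms; and finally close the cycle with the easy implication $(3)\then(2)$.

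For $(1)\iff(2)$: since by hypothesis ${\bf T}(f) = {\bf T}(g_i)\circ\rho_i$ for each $i$, each term $g_i$ with its shift $\rho_i$ lands on a term of ${\Cal B}^{(m)}$, so Remark~\ref{50cRem6} (applied with trivial left data, $\lambda=1$, and $a=b=\rho_i$ absorbed appropriately) gives ${\bf M}(a_i\rho_i\diamond g_i) = {\bf M}(a_i g_i\star\rho_i) = a_i{\bf M}(g_i)\ast\rho_i$. Summing over $i$ identifies the right-hand sides of (1) and (2) termwise, hence (1) and (2) are equivalent as statements about ${\bf M}(f)$.

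For $(2)\then(3)$: following the earlier Lemma, I would set $q := f - {\bf M}(f)$, $p_i := g_i - {\bf M}(g_i)$, and
$$h_i := a_i\rho_i\diamond g_i - a_i{\bf M}(g_i)\ast\rho_i = \tail(a_i{\bf M}(g_i)\star\rho_i) + a_i p_i\star\rho_i,$$
with $h := q - \sum_i h_i$. Then
$$f - \sum_i a_i\rho_i\diamond g_i = {\bf M}(f) - \sum_i a_i{\bf M}(g_i)\ast\rho_i + h.$$
Writing $\tau := {\bf T}(f) = {\bf T}(g_i)\circ\rho_i\in{\Cal B}^{(m)}$, one checks ${\bf T}(q) < \tau$ and ${\bf T}(h_i) < \tau$ for each $i$ (the latter because the tail term drops the degree and $p_i\star\rho_i$ has top term ${\bf T}(p_i)\circ\rho_i < {\bf T}(g_i)\circ\rho_i = \tau$), hence ${\bf T}(h) < \tau$. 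Assuming (2), the first two terms on the right cancel, leaving $f - \sum_i a_i\rho_i\diamond g_i = h$, so ${\bf T}(f - \sum_i a_i\rho_i\diamond g_i) = {\bf T}(h) < {\bf T}(f)$, which is (3). Conversely, $(3)\then(2)$ is immediate: if ${\bf T}(f - \sum_i a_i\rho_i\diamond g_i) < {\bf T}(f)$ then the coefficient of $\tau$ in the difference vanishes, i.e.\ ${\bf M}(f) - \sum_i a_i{\bf M}(g_i)\ast\rho_i = 0$.

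I do not expect a serious obstacle here; the only point requiring care is bookkeeping with Weispfenning multiplication $\diamond$ versus the ring multiplication $\star$ — specifically verifying that the identity ${\bf M}(a_i\rho_i\diamond g_i) = a_i{\bf M}(g_i)\ast\rho_i$ under the standing hypothesis is exactly the content of Remark~\ref{50cRem6} applied in the restricted case, and that the estimate ${\bf T}(p_i\star\rho_i) < \tau$ uses the compatibility of $<$ with the semigroup operation on $\Gamma$ (so that ${\bf T}(p_i)\circ\rho_i < {\bf T}(g_i)\circ\rho_i$). Everything else is a transcription of the bilateral argument with $\lambda_i = 1$ and $b_i = 1$.
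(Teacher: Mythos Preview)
Your proposal is correct and follows essentially the same approach as the paper's own proof. The only cosmetic difference is that where you write the tail decomposition using $\star$ (as $\tail(a_i{\bf M}(g_i)\star\rho_i)+a_ip_i\star\rho_i$), the paper introduces a $\diamond$-version of $\tail$, setting $\tail(a\rho\diamond{\bf M}(g)):=a\rho\diamond{\bf M}(g)-a{\bf M}(g)\ast\rho$ and writing $h_i=\tail(a_i\rho_i\diamond{\bf M}(g_i))+a_i\rho_i\diamond p_i$; since the paper identifies $a\rho\diamond g$ with $ag\star\rho$, these are the same object.
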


\begin{proof} Remark that the assumption
${\bf T}(f) =  {\bf T}(g_i)\circ\rho_i,$ for each $i$, grants, according Remark~\ref{50cRem6}, the equivalence $(1) \iff (2)$.

Moreover, denoting, for each $\rho\in{\Cal B}, a\in R\setminus\{0\}$ and $g\in{\Cal A}^m$
$$\tail(a\rho\diamond {\bf M}(g) ):=a\rho\diamond {\bf M}(g)-a{\bf M}(g)\ast\rho$$
and setting
$q := f-{\bf M}(f)$, $p_i := g_i-{\bf M}(g_i)$,
$$h_i := a_i  \rho_i\diamond g_i -a_i {\bf M}(g_i)\ast \rho_i=
\tail( a_i  \rho_i\diamond {\bf M}(g_i))+a_i  \rho_i\diamond p_i$$
and $h := q-\sum_i h_i$
 we have
\begin{eqnarray*}
f -\sum_i a_i  \rho_i\diamond g_i  
&=&
{\bf M}(f) + q -  \sum_i a_i {\bf M}(g_i)\ast  \rho_i -
\sum_i h_i 
\\ &=&
{\bf M}(f) -  \sum_ia_i {\bf M}(g_i)\ast  \rho_i  +
 h.
\end{eqnarray*}

Thus, setting $\tau := {\bf T}(f) = {\bf T}(g_i)\circ\rho_i\in{\Cal B}^{(m)}$,
we have ${\bf T}(q)<\tau$ and ${\bf T}(h_i)<\tau$  for each $i$, so that 
 ${\bf T}(h)<\tau$.

Therefore 
${\bf M}(f) = \sum_ia_i {\bf M}(g_i)\ast \rho_i =
 \sum_i{\bf M}(a_i  \rho_i\diamond g_i)$ implies
 $f -\sum_i  a_i  \rho_i\diamond g_i  = h$ so that
  ${\bf T}\left(f -\sum_i a_i  \rho_i\diamond g_i\right) = {\bf T}(h) <{\bf T}(f)$
 proving $(2) \then (3)$.

Conversely,
$${\bf T}\left(f -\sum_ia_i  \rho_i\diamond  g_i \right) < {\bf T}(f)
\then
{\bf M}(f) - \sum_ia_i {\bf M}(g_i)\ast \rho_i = 0.$$
\end{proof}

\begin{Theorem}\label{50cP1res}
For any set $F\subset {\Cal A}^m\setminus\{0\}$, among the
following
conditions:
\begin{enumerate}
\item $f\in{\Bbb I}_W(F) \iff $ it has a restricted strong Gr\"obner representation 
$$f= \sum_{i=1}^\mu a_i  g_i \star \rho_i = \sum_{i=1}^\mu a_i\rho_i\diamond  g_i$$  
in terms of $F$ which further satisfies
$${\bf T}(f) =   {\bf T}(g_1)\circ\rho_1  > \cdots >  {\bf T}(g_i)\circ\rho_i>  {\bf T}(g_{i+1})\circ\rho_{i+1};$$
\item $f\in{\Bbb I}_W(F) \iff $ it has a restricted strong Gr\"obner representation in terms of $F$;
\item $F$ is a restricted strong Gr\"obner basis of ${\Bbb I}_W(F)$; 
\item for each $f\in{\Cal A}^m\setminus\{0\}$ and any restricted strong 
normal form $h$ of $f$ w.r.t. $F$ we have
$f\in{\Bbb I}_W(F) \iff h = 0;$ 
\item $f\in{\Bbb I}_W(F) \iff $ it has a  restricted weak Gr\"obner representation in terms of $F$;
\item $F$ is a  restricted weak Gr\"obner basis of ${\Bbb I}_W(F)$;
\item for each $f\in{\Cal A}^m\setminus\{0\}$ and any restricted weak normal form $h$ of $f$ w.r.t. $F$ we have
$f\in{\Bbb I}_W(F) \iff h = 0.$ 
\end{enumerate}
there are the implications
$$
\begin{array}{rcccccl}
(1)&\iff&(2)&\iff&(3)&\iff&(4) \\
&&\Downarrow&&\Downarrow&&\Downarrow\cr
&&(5)&\iff&(6)&\iff& (7)\cr\end{array}$$
If 
$R$ is a skew field
we have also the implication $(5) \then (2)$ and as a consequence the seven conditions are equivalent. 
\end{Theorem}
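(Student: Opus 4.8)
The plan is to repeat, almost \emph{verbatim}, the argument used for Theorem~\ref{50cP1bil}, replacing throughout the bilateral monomial action $a_i\lambda_i\ast{\bf M}(g_i)\ast b_i\rho_i$ by the restricted one $a_i{\bf M}(g_i)\ast\rho_i = a_i\rho_i\diamond{\bf M}(g_i)$, and invoking the Lemma stated immediately above in place of its bilateral counterpart. First I would record the trivial implications $(1)\then(2)\then(3)$, $(2)\then(5)$, $(3)\then(6)$, $(5)\then(6)$ and $(4)\then(7)$: each follows straight from the definitions, since a restricted \emph{strong} Gr\"obner representation (resp.\ basis, normal form) is in particular a \emph{weak} one, and because whenever ${\bf T}(g)\circ\rho={\bf T}(f)$ one has ${\bf T}(g)\circ\rho\in{\Cal B}^{(m)}$, so that the leading monomial of any weak representation of $f\in{\Bbb I}_W(F)$ is already the sum of the terms $a_i{\bf M}(g_i)\ast\rho_i$ over the indices realising ${\bf T}(f)$.

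Next I would close the upper cycle. For $(3)\then(1)$: given $f\in{\Bbb I}_W(F)\setminus\{0\}$, the strong-basis hypothesis gives $g\in F$, $\rho\in{\Cal B}$, $a\in R\setminus\{0\}$ with ${\bf T}(f)={\bf T}(g)\circ\rho$ and ${\bf M}(f)=a{\bf M}(g)\ast\rho$; the preceding Lemma, together with Remark~\ref{50cRem6}'s control of $\tail(a\rho\diamond{\bf M}(g))$, shows that $f_1:=f-a\rho\diamond g$ satisfies ${\bf T}(f_1)<{\bf T}(f)$, and one concludes by transfinite induction on ${\bf T}(f)$, legitimate since ${\Cal B}^{(m)}$ is well-ordered by $<$; accumulating the successive leading monomials yields a strictly decreasing strong representation. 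The same induction, subtracting the whole leading-term-matched sum $\sum_i a_i\rho_i\diamond g_i$ at each step, proves $(6)\then(5)$.

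For $(3)\then(4)$ and $(6)\then(7)$ I would use the dichotomy on a normal form $h$ of $f$: either $h=0$ and $f=(f-h)\in{\Bbb I}_W(F)$, or $h\neq 0$, in which case ${\bf M}(h)\notin{\bf M}_W(F)$ (resp.\ $\notin{\bf M}\{{\Bbb I}_W({\bf M}\{F\})\}$) forces $h\notin{\Bbb I}_W(F)$, hence $f\notin{\Bbb I}_W(F)$. The converses $(4)\then(2)$ and $(7)\then(5)$ are immediate: for $f\in{\Bbb I}_W(F)$ the normal form is $h=0$ and $f=f-h$ already carries the required representation. Together with the vertical arrows $(2)\then(5)$, $(3)\then(6)$, $(4)\then(7)$ this establishes the whole diagram. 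Finally, when $R$ is a skew field, every nonzero scalar is a unit, so from a weak representation of $f\in{\Bbb I}_W(F)\setminus\{0\}$ one selects an index with ${\bf T}(g)\circ\rho={\bf T}(f)=:\tau$, lets $d\in R\setminus\{0\}$ be defined by $d\tau={\bf M}(\rho\diamond g)={\bf M}(g)\ast\rho$, and computes ${\bf M}(f)=\lc(f)d^{-1}\,{\bf M}(g)\ast\rho={\bf M}\bigl((\lc(f)d^{-1})\rho\diamond g\bigr)$; thus every $f\in{\Bbb I}_W(F)$ has its maximal monomial $\mid_W$-divided by some ${\bf M}(g)$, i.e.\ $(3)$ holds, whence all seven conditions coincide.

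I expect the only genuinely delicate point to be the two inductions $(3)\then(1)$ and $(6)\then(5)$: one must verify that subtracting $a_i\rho_i\diamond g_i$ — which differs from $a_ig_i\star\rho_i$ only by a tail of strictly smaller ${\bf T}$ — still strictly lowers ${\bf T}(f)$, and that the process terminates. Both are secured by the preceding Lemma, Remark~\ref{50cRem6} and the well-ordering of ${\Cal B}^{(m)}$, exactly as in the bilateral proof of Theorem~\ref{50cP1bil}; no new phenomenon arises from Weispfenning multiplication.
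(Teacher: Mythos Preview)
Your proposal is correct and follows essentially the same route as the paper's own proof: the same list of trivial implications, the same well-ordered induction for $(3)\Rightarrow(1)$ and $(6)\Rightarrow(5)$ via the preceding Lemma and Remark~\ref{50cRem6}, the same dichotomy for $(3)\Rightarrow(4)$ and $(6)\Rightarrow(7)$, and the same unit trick with $d^{-1}$ in the skew-field case. One minor remark: you write that $a_i\rho_i\diamond g_i$ ``differs from $a_ig_i\star\rho_i$ only by a tail'', but in the paper's conventions these are used as two notations for the same element (see the definition of a restricted Gr\"obner representation), so no discrepancy needs to be controlled there; the tail that actually matters is $\tail(a\rho\diamond{\bf M}(g)):=a\rho\diamond{\bf M}(g)-a{\bf M}(g)\ast\rho$, exactly as you invoke via the Lemma.
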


 \begin{proof}
  The implications  $(1) \then (2) \then (3)$, $(5) \then (6)$, $(2) \then (5)$, $(3) \then (6)$  and $(4) \then (7)$ are trivial.

Ad $(3) \then (1)$:
for each $f\in{\Bbb I}_W(F)$ by assumption there are 
elements
$g\in F$,
$\rho\in{\cal B},a\in R\setminus\{0\}$, such that
$${\bf T}(f) =  {\bf T}(g)\circ\rho \And
{\bf M}(f) =a {\bf M}(g)  \ast \rho.$$

Thus ${\bf M}(a  \rho\diamond{\bf M}(g))= a {\bf M}(g)  \ast \rho = {\bf M}(f)$
and
denoting, for $f = {\bf M}(f)+q$ and $g = {\bf M}(g)+p$,
$$f_1 := f - a \rho\diamond  g 
=q-\tail( a \rho\diamond {\bf M}(g)) -  a \rho\diamond p$$
we have
${\bf T}(f_1) < {\bf T}(f)$ so the claim follows 
by induction, since ${\Cal B}^{(m)}$ is well-ordered by $<$.

\medskip

Ad $(6) \then (5)$: similarly, for each $f\in{\Bbb I}_W(F)$ by assumption there
are  elements
$g_i\in F$, 
$\rho_i\in {\cal B},
a_i\in R\setminus\{0\}$ 
such that
\begin{itemize}
\item ${\bf T}(f) =  {\bf T}(g_i)\circ\rho_i$ for all $i$,
\item ${\bf M}(f)  = \sum_i a_i  {\bf M}(g_i)\ast \rho_i= \sum_i a_i\rho_i \diamond {\bf M}(g_i).$
\end{itemize}

Thus ${\bf T}(f -\sum_i a_i  \rho_i\diamond g_i)<{\bf T}(f)$ and 
it is then sufficient to denote 
$f_1 := f -\sum_i a_i  \rho_i\diamond g_i$ 
in order to deduce the claim 
by induction.

\medskip 

Ad $(3) \then (4)$ and $(6) \then (7)$: either
\begin{itemize}
\item $h=0$ and $f=f-h\in{\Bbb I}_W(F)$ or
\item $h\neq 0$, ${\bf M}(h) \notin {\bf M}_W({\Bbb I}_W(F))$, $h\notin{\Bbb I}_W(F)$ and $f\notin{\Bbb I}_W(F)$.
\end{itemize}

\medskip

Ad  $(4) \then (2)$ and $(7) \then (5)$: for each $f\in{\Bbb I}_W(F)$, its normal form is  $h=0$ and
$f = f-h$ has a strong (resp.: weak) Gr\"obner representation in
terms of $F$.

\medskip

Ad $(5) \then (2)$: let $f\in{\Bbb I}_W(F)\setminus\{0\};$ since $R$ is a skew field, (5) implies the existence of elements 
$g\in F$, 
$\rho\in {\cal B},$
 such that
${\bf T}(f) =  {\bf T}(g)\circ\rho =: \tau;$
thus denoting $d\in R\setminus\{0\}$ the value 
which satisfies
$$d\tau = {\bf M}( \rho\diamond g) =  {\bf M}(g)\ast\rho,$$
we have
$${\bf M}(f) =  
\lc(f)d^{-1}d \tau  = 
\lc(f)d^{-1} {\bf M}(g)\ast\rho=
{\bf M}\left((\lc(f)d^{-1}) \rho\diamond g\right)$$
as required.  
 \end{proof}

\section{Lifting Theorem for Restricted Modules}

Given the finite set
$$F := \{g_1,\ldots,g_u\}\subset {\Cal A}^m, 
g_i = {\bf M}(g_i)-p_i =: a_i \tau_i {\bf e}_{\iota_i} - p_i,$$
let us now  denote ${\sf M}$ 
the restricted module ${\sf M} := {\Bbb I}_W(F)$ endowed with its natural $\Gamma$-pseudovaluation ${\bf T}(\cdot)$.

  Considering both the left $R$-module
$R\otimes_{R}{\Cal A}^{\op}$ and the  left $R$-module
$R\otimes_{R}{\Cal G}^{\op}$, which, as sets, coincide, 
we impose on the left $R$-module $\left(R\otimes_{R}{\Cal A}^{\op}\right)^u$, whose 
canonical basis is denoted $\{e_1,\ldots,e_u\}$ and whose
 generic element
 has the shape
$$\sum_i a_i  e_{l_i}    \rho_i, 
 \rho_i\in{\Cal B}, a_i\in R\setminus\{0\}, 1\leq l_i \leq u,$$
the $\Gamma^{(m)}$-pseudovaluation -- compatible with the natural $\Gamma$-pseudovaluation of ${\Cal A}$ --
$$w :\left(R\otimes_{R}{\Cal A}^{\op}\right)^u \to \Gamma^{(m)}$$
defined for each $\sigma  := \sum_i a_i   e_{l_i}    \rho_i\in\left(R\otimes_{R}{\Cal A}^{\op}\right)^u\setminus\{0\}$
 as
$$w(\sigma) := 
 \max_<\{ {\bf T}(g_{l_i})\circ\rho_i,\, \rho_i \in {\Cal B}\} \in \Gamma^{(m)}$$
 so that 
$G\left(\left(R\otimes_{R}{\Cal A}^{\op}\right)^u\right) =
\left(G\left(R\otimes_{R}{\Cal A}^{\op}\right)\right)^u=
\left(R\otimes_{R}{\Cal G}^{\op}\right)^u$
and its corresponding $\Gamma^{(m)}$-homogeneous -- of $\Gamma^{(m)}$-degree
$w(\sigma)$ -- {\em leading form} 
is
$${\Cal L}(\sigma) := \sum_{h\in H} a_h  e_{l_h} \rho_h \in \left(R\otimes_{R}{\Cal G}^{\op}\right)^u 
\Where
H := \{ h : \tau_{l_h}\circ\rho_h{\bf e}_{\iota_{l_h}} = w(\sigma) \}.$$ 

We can therefore consider the morphisms
\begin{eqnarray*}
{\Frak s}_W :  \left(R\otimes_{R}{\Cal G}^{\op}\right)^u \to {\Cal G}^m
&:& {\Frak s}_W\left(\sum_i a_i  e_{l_i}  \rho_i\right) :=  
\sum_i a_i  {\bf M}(g_{l_i})\ast \rho_i,
\\
{\Frak S}_W :  \left(R\otimes_{R}{\Cal A}^{\op}\right)^u \to {\Cal A}^m
&:& {\Frak S}_W\left(\sum_i a_i e_{l_i}  \rho_i\right) := 
\sum_i a_i  g_{l_i}  \star \rho_i.
\end{eqnarray*}

We can equivalently reformulate this setting in terms of Weispfenning multiplication considering 
the morphisms
\begin{eqnarray*}
{\Frak s}_W : {\Cal G}^u \to {\Cal G}^m 
&:& {\Frak s}_W\left(\sum_{i=1}^u \left(\sum_{\rho\in{\Cal B}} a_{i\rho} \rho\right) e_i\right) := \sum_{i=1}^u \sum_{\rho\in{\Cal B}} a_{i\rho}   {\bf M}(g_i)\ast\rho,
\\
{\Frak S}_W : {\Cal A}^u \to {\Cal A}^m 
&:& {\Frak S}_W\left(\sum_{i=1}^u \left(\sum_{\rho\in{\Cal B}} a_{i\rho} \rho\right) e_i\right) := \sum_{i=1}^u \sum_{\rho\in{\Cal B}}  a_{i\rho} \rho\diamond g_i,
\end{eqnarray*}
where  the symbols $\{e_1,\ldots,e_u\}$ denote the common canonical basis of 
 ${\Cal A}^u$ and ${\Cal G}^u$, which,  as  sets,  coincide and which satisfy 
 ${\Cal G}^u = G({\Cal A})^u = G({\Cal A}^u)$ under the pseudovaluation 
$w : {\Cal A}^u \to \Gamma^{(m)}$  defined,
for each 
$$\sigma  := \sum_{i=1}^u \left(\sum_{\rho\in{\Cal B}} a_{i\rho} \rho\right) e_i \in{\Cal A}^u\setminus\{0\}$$
by
$$w(\sigma) := \max_<\left\{{\bf T}(g_i)\circ\rho : a_{i\rho}\neq 0\right\}\in\Gamma^{(m)}.$$

The corresponding $\Gamma^{(m)}$-homogeneous -- of $\Gamma^{(m)}$-degree
$w(\sigma)$ --
{\em leading form} is
$${\Cal L}(\sigma) := 
\sum_{i=1}^u \left(\sum_{\rho\in B_i} a_{i\rho}\rho\right) e_i\in{\Cal G}^u$$
where, for each $i$ we set
$B_i := \left\{\rho\in{\Cal B} :{\bf T}(g_i)\circ\rho = w(\sigma)\right\}.$
  
\begin{Definition} \

\begin{itemize}

\item if $u\in \ker({\Frak s}_W)$ is $\Gamma^{(m)}$-homogeneous
and $U\in \ker({\Frak S}_W)$ is such that $u = {\Cal L}(U)$, we say that 
$u$ {\em lifts} to $U$, or $U$ is a {\em lifting} of $u$, or simply $u$ {\em has
a lifting};\index{lift}
\item a restricted {\em Gebauer--M\"oller set}\index{Gebauer--M\"oller!set} for $F$ is any 
$\Gamma^{(m)}$-homogeneous basis of
$\ker({\Frak s}_W)$;
\item for each $\Gamma^{(m)}$-homogeneous element 
$\sigma=\sum_i a_i  e_{l_i}    \rho_i\in \left(R\otimes_{R}{\Cal A}^{\op}\right)^u$
-- or, equivalently,
$$\sigma=\sum_{i=1}^u a_i\rho_ie_i\in{\Cal A}^u\setminus\{0\}, a_i\neq 0, \then{\bf T}(g_i)\circ\rho_i=w(\sigma),$$
we say that ${\Frak S}_W(\sigma)$  has a restricted
 {\em quasi-Gr\"obner representation}\index{Gr\"obner!representation!quasi}
 in terms of $F$ if it can be written as
$${\Frak S}_W(\sigma)= 
\sum_{l=1}^\mu a_l g_{l}\star  \rho_l = 
\sum_{l=1}^\mu a_l\rho_l\diamond g_{l} 
:
 \rho_l\in{\Cal B}, a_l\in R\setminus\{0\}, g_l \in F$$
with 
$w(\sigma)> {\bf T}(a_l   g_{l}\star \rho_l) =  {\bf T}(g_{l})\circ \rho_l$ for each $l,$
-- or, equivalently,
$${\Frak S}_W(\sigma)= \sum_{i=1}^u h_i\diamond g_i, h_i\in{\Cal A}^u, w(\sigma)> {\bf T}(g_{i})\circ{\bf T}(h_{i}).$$
\item Denoting for each set  
$S\subset {\sf M}$,
${\Cal L}\{S\} := \{{\Cal L}(g) : g\in S\}\subset G({\sf M})$,
 a set $B\subset  {\sf M}$ is called a restricted {\em standard basis}\index{standard!basis} 
of {\sf M} if  
$${\Bbb I}_W({\Cal L}\{B\}) = {\Bbb I}_W({\Cal L}\{{\sf M}\}).$$
\qed\end{itemize}\end{Definition}

 
 \begin{Theorem}[M\"oller--Pritchard]\cite{M,Pr1,Pr2}\label{50BiLiTh} 
With the present notation and denoting 
$\GM_W(F)$   any  restricted Gebauer--M\"oller set for $F$, the following
conditions are equivalent: 
\begin{enumerate}
\item $F$ is a restricted Gr\"obner basis of ${\sf M}$;
\item $f \in {\sf M}\iff f$  has a restricted  Gr\"obner representation in terms of $F$;
\item for each $\sigma\in\GM_W(F)$, 
the restricted S-polynomial ${\Frak S}_W(\sigma)$ has a restricted quasi-Gr\"obner representation
${\Frak S}_W(\sigma)= \sum_{l=1}^\mu a_l\rho_l\diamond  g_{l} = \sum_{l=1}^\mu a_l  g_{l}\star  \rho_l, $ 
in terms of $F$;
\item each $\sigma\in\GM_W(F)$ has a lifting $\lift(\sigma)$;
\item each $\Gamma^{(m)}$-homogeneous element 
$u\in\ker({\Frak s}_W)$ has a lifting $\lift(u)$.
\end{enumerate}
\end{Theorem}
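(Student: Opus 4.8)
The plan is to prove the cyclic chain of implications $(1)\then(2)\then(3)\then(4)\then(5)\then(1)$, mimicking verbatim the classical M\"oller lifting argument but carried out with Weispfenning multiplication $\diamond$ in place of the left module action, which is legitimate precisely because the restricted module ${\Bbb I}_W(F)$ is by definition the left $R$-span of $\{m\diamond f : m\in{\sf M}({\Cal A}^m), f\in F\}$, i.e.\ the map ${\Frak S}_W$ is surjective onto ${\sf M}$. The equivalences $(1)\iff(2)$ and the easy direction of the normal-form criterion are already essentially contained in Theorem~\ref{50cP1res}, so the real content is the passage between Gr\"obner representations and the vanishing of S-polynomials modulo lower-order terms.

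First I would observe that $(1)\iff(2)$ is immediate: $(1)\then(2)$ because a restricted Gr\"obner basis yields, for each $f\in{\sf M}$, a leading term $\lambda$-match which can be subtracted to lower ${\bf T}(f)$, and then one iterates using that ${\Cal B}^{(m)}$ is well-ordered; $(2)\then(1)$ is the definitional unwinding. The implication $(4)\iff(5)$ is almost formal: $(5)\then(4)$ is trivial since $\GM_W(F)\subset\ker({\Frak s}_W)$, while $(4)\then(5)$ follows because any $\Gamma^{(m)}$-homogeneous $u\in\ker({\Frak s}_W)$ is an $R$-linear combination $u=\sum_j c_j\sigma_j$ of elements of a Gebauer--M\"oller basis $\GM_W(F)$ with all $\sigma_j$ of the same $\Gamma^{(m)}$-degree, and liftings add: $\lift(u):=\sum_j c_j\lift(\sigma_j)$ lies in $\ker({\Frak S}_W)$ and has leading form $u$ (one must check no unexpected cancellation of leading forms occurs, which holds because the $c_j$ are scalars in $R$ and ${\Frak s}_W({\Cal L}(\lift(u)))={\Frak s}_W(u)=0$ forces ${\Cal L}(\lift(u))=u$ by homogeneity). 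The equivalence $(3)\iff(4)$ unwinds the definition: a lifting of $\sigma$ is exactly a $U\in\ker({\Frak S}_W)$ with ${\Cal L}(U)=\sigma$, which rearranged says ${\Frak S}_W(\sigma)={\Frak S}_W(\sigma-U)+{\Frak S}_W(U)={\Frak S}_W(\sigma-U)$ where $\sigma-U$ has $w$-value strictly below $w(\sigma)$ after cancelling the leading form --- that is precisely a restricted quasi-Gr\"obner representation, modulo the $\tail$ corrections of Remark~\ref{50cRem6}.

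The substantive step is $(3)\then(2)$, which is the lifting theorem proper. Here I would take $f\in{\sf M}={\Bbb I}_W(F)$, pick any representation $f=\sum_l a_l\rho_l\diamond g_l$, and set $\gamma:=\max_<\{{\bf T}(g_l)\circ\rho_l\}$. If $\gamma={\bf T}(f)$ we already have a Gr\"obner representation; otherwise $\gamma>{\bf T}(f)$, the $w$-value $\gamma$ leading forms must cancel, so $\sum_{l\in H}a_l\rho_l e_l\in\ker({\Frak s}_W)$ where $H=\{l:{\bf T}(g_l)\circ\rho_l=\gamma\}$ --- here I use that $G({\Cal A}^u)=G({\Cal A})^u$ and ${\Frak s}_W={\Cal L}\circ{\Frak S}_W$ on the leading form. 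By $(3)$ (extended via $(3)\iff(5)$ to all homogeneous syzygies, or directly: express the homogeneous kernel element through $\GM_W(F)$), this homogeneous syzygy has a lifting, i.e.\ can be rewritten so that $\sum_{l\in H}a_l\rho_l\diamond g_l=\sum_k b_k\sigma_k\diamond g_{j_k}$ with all ${\bf T}(g_{j_k})\circ\sigma_k<\gamma$; substituting back into $f$ strictly decreases the number of top-degree terms or lowers $\gamma$ itself, and Dickson/well-ordering of ${\Cal B}^{(m)}$ under $<$ terminates the induction. The main obstacle --- and the one point demanding genuine care rather than bookkeeping --- is controlling the $\tail$ discrepancy between $\rho\diamond{\bf M}(g)$ computed in ${\Cal G}$ versus $\rho\diamond g$ computed in ${\Cal A}$ (Remark~\ref{50cRem6}): one must verify that replacing $\ast$-computations in ${\Cal G}$ by $\star$/$\diamond$-computations in ${\Cal A}$ only ever introduces terms of strictly smaller ${\bf T}(\cdot)$, so that the descending induction on $w$-value is not disturbed; this is exactly where condition~(\ref{50cEqAlg}) and the fact that ${\Cal B}$ is an order module guarantee noetherianity of the rewriting. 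The remaining implication $(2)\then(3)$ is then easy: given $(2)$, apply it to each restricted S-polynomial ${\Frak S}_W(\sigma)$, which lies in ${\sf M}$ and (since $\sigma\in\ker({\Frak s}_W)$) has ${\bf T}({\Frak S}_W(\sigma))<w(\sigma)$, to obtain the required quasi-Gr\"obner representation.
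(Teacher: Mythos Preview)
Your overall architecture matches the paper's: a cycle of implications with the substantive work concentrated in the reduction argument $(5)\then(1)$ (which you phrase as $(3)\then(2)$ after first establishing $(3)\iff(4)\iff(5)$). The handling of the $\tail$ discrepancy and the well-ordering induction are the right ideas and essentially identical to the paper's proof.

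There is, however, a genuine gap in your $(4)\then(5)$. You write that any $\Gamma^{(m)}$-homogeneous $u\in\ker({\Frak s}_W)$ is ``an $R$-linear combination $u=\sum_j c_j\sigma_j$ of elements of $\GM_W(F)$ with all $\sigma_j$ of the same $\Gamma^{(m)}$-degree''. This is false in general: a Gebauer--M\"oller set is a homogeneous \emph{module} basis of $\ker({\Frak s}_W)$, and its elements have various degrees $w(\sigma)$, typically strictly smaller than $w(u)$. The correct expression (which the paper uses) is
\[
u = \sum_{\sigma\in\GM_W(F)} a_\sigma\,\sigma\ast\rho_\sigma,\qquad \rho_\sigma\in{\Cal B},\ a_\sigma\in R\setminus\{0\},\ w(\sigma)\circ\rho_\sigma = w(u),
\]
i.e.\ each basis syzygy must be shifted on the right by some $\rho_\sigma$ to reach degree $w(u)$. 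The lifting is then
\[
\lift(u) := \sum_{\sigma} a_\sigma\,\rho_\sigma\diamond\lift(\sigma) = \sum_{\sigma} a_\sigma\,\lift(\sigma)\star\rho_\sigma,
\]
and one must check (as the paper does) that this is still in $\ker({\Frak S}_W)$ and has leading form $u$ --- which requires that ${\Frak S}_W$ is compatible with the right $\star$-action and that the shift $\rho_\sigma$ carries the lower-order tail of $\lift(\sigma)$ to terms still strictly below $w(u)$. Your parenthetical justification (``${\Frak s}_W({\Cal L}(\lift(u)))=0$ forces ${\Cal L}(\lift(u))=u$ by homogeneity'') does not establish this: it only shows ${\Cal L}(\lift(u))\in\ker({\Frak s}_W)$, not that it equals $u$. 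Once you insert the $\rho_\sigma$ correctly the argument goes through exactly as in the paper, but as written the step does not stand.
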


 \begin{proof}
  \
\begin{description}
\item[$(1) \then (2)$] is Theorem~\ref{50cP1res} $(6) \then (5)$.
\item[$(2) \then (3)$] ${\Frak S}_W(\sigma)\in{\sf M}$ and ${\bf T}({\Frak S}_W(\sigma)) < w(\sigma)$.
\item[$(3) \then (4)$] Let 
$${\Frak S}_W(\sigma)= 
\sum_{i=1}^\mu a_i\rho_i\diamond   g_{l_i} = 
\sum_{i=1}^\mu a_i  g_{l_i}\star    \rho_i,
w(\sigma)> \tau_{l_i}\circ \rho_i {\bf e}_{\iota_{l_i}}$$
be a restricted 
quasi-Gr\"obner representation in terms of $F$; then $$\lift(\sigma) := \sigma -
\sum_{i=1}^\mu a_i  e_{l_i} \rho_i$$ is the required lifting of $\sigma$.  
\item[$(4) \then (5)$] Let 
$$u := \sum_i a_i  e_{l_i}   \rho_i\in\left(R\otimes_{R}{\Cal G}^{\op}\right)^u,
  \tau_{l_i}\circ\rho_i  {\bf e}_{\iota_{l_i}}= w(u),$$
be a $\Gamma^{(m)}$-homogeneous element in $\ker({\Frak s}_W)$ of $\Gamma^{(m)}$-degree $w(u)$.

\medskip

Then there are $\rho_\sigma\in{\Cal B},a_\sigma\in R \setminus\{0\},$
for which
$$u= \sum_{\sigma\in\GM_W(F)} 
a_\sigma  \sigma\ast  \rho_\sigma  
, 
w(\sigma)\circ\rho_\sigma = w(u).$$

\medskip 
  
For each $\sigma\in\GM_W(F)$ denote
{\small $$\bar{\sigma} := \sigma - \lift(\sigma) =  {\Cal L}(\lift(\sigma)) -
\lift(\sigma) := 
\sum_{i=1}^{\mu_\sigma} a_{i\sigma}  e_{l_{i\sigma}}   \rho_{i\sigma}
\in\left(R\otimes_{R}{\Cal A}^{\op}\right)^u$$}
and remark that
 $ \tau_{l_i} \circ\rho_{i\sigma}{\bf e}_{\iota_{l_i}}\leq w(\bar{\sigma}) < w(\sigma)$ and 
 ${\Frak S}_W(\bar{\sigma}) = {\Frak S}_W(\sigma).$

\medskip

It is sufficient to define
 $$\lift(u) := \sum_{\sigma\in\GM_W(F)} a_\sigma  \lift(\sigma)\star \rho_\sigma = \sum_{\sigma\in\GM_W(F)} a_\sigma \rho_\sigma\diamond \lift(\sigma)$$
and 
$$\bar{u}:= \sum_{\sigma\in\GM_W(F)} a_\sigma  \bar{\sigma}\star \rho_\sigma  = \sum_{\sigma\in\GM_W(F)} a_\sigma \rho_\sigma\diamond \bar{\sigma}$$
to obtain
$$\lift(u) = u - \bar{u},{\Cal L}(\lift(u)) = u,{\Frak S}_W(\bar{u}) = {\Frak S}_W(u), 
{\Frak S}_W(\lift(u)) = 0.$$

\item[$(5) \then (1)$] Let
$g\in{\sf M}$, so that there are 
$ \rho_i\in{\Cal B}, a_i\in R\setminus\{0\}, 1\leq l_i \leq u,$ 
such that
$\sigma_1 := \sum_{i=1}^\mu a_i  e_{l_i}  \rho_i\in  \left(R\otimes_{R}{\Cal A}^{\op}\right)^u$
satisfies 
$$g = {\Frak S}_W(\sigma_1)=  \sum_{i=1}^\mu a_i g_{l_i}\star  \rho_i =  \sum_{i=1}^\mu a_i\rho_i\diamond g_{l_i}.$$
Denoting $H := \{i :  {\bf T}(g_{l_i})\circ \rho_i = 
 \tau_{l_i}\circ \rho_i{\bf e}_{\iota_{l_i}} = 
w(\sigma_1) \}$, then 
either 

\begin{itemize}
\item $w(\sigma_1) = {\bf T}(g)\in{\Cal B}^{(m)}$ so that, for each $i\in H$,
$ {\bf M}(a_i  {\bf M}(g_{l_i})\star \rho_i) =  a_i  {\bf M}(g_{l_i})\ast  \rho_i$
and
$${\bf M}(g) = \sum_{i\in H}  a_i {\bf M}(g_{l_i})\ast  \rho_i\in
{\bf M}\{{\Bbb I}_W({\bf M}\{F\})\},$$ and we are through, or
\item ${\bf T}(g) < w(\sigma_1)$, in which case\footnote{Compare Remark~\ref{50cRem6}.} 
$0 = \sum_{i\in H}  a_i {\bf M}(g_{l_i})\ast  \rho_i = {\Frak s}_W({\Cal L}(\sigma_1))$
and 
the $\Gamma^{(m)}$-homoge\-neous element 
${\Cal L}(\sigma_1)\in \ker({\Frak s}_W)$
has a  lifting $$U := {\Cal L}(\sigma_1)-
\sum_{j=1}^\nu a_j e_{l_j} \rho_j\in \left(R\otimes_{R}{\Cal A}^{\op}\right)^u$$
with $$
\sum_{j=1}^\nu a_j\rho_j\diamond g_{l_j} = 
\sum_{i\in H} a_i\rho_i\diamond  g_{l_i}\And
\tau_{l_j}\circ \rho_j{\bf e}_{\iota_{l_j}} < w(\sigma_1)$$
so that
$g = {\Frak S}_W(\sigma_2)$ and  $w(\sigma_2)< w(\sigma_1)$
for
$$\sigma_2 := 
\sum_{i\not\in H} a_i  e_{l_i}  \rho_i + 
\sum_{j=1}^\nu a_j   e_{l_j}   \rho_j
\in \left(R\otimes_{R}{\Cal A}^{\op}\right)^u$$
\end{itemize}

and the claim follows by the well-orderedness of $<$.

\end{description} \end{proof}

\begin{Theorem}[Janet---Schreier]\cite{Jan1,Sch1,Sch2} \

With the same notation the equivalent conditions (1-5)  imply that 
\begin{enumerate}\setcounter{enumi}{5}
\item $\{\lift(\sigma) : \sigma\in{\GM}_W(F)\}$ is a restricted standard basis of $\ker({\Frak S}_W)$. 
\end{enumerate}
\end{Theorem}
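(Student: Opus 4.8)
The plan is to trace through the definitions so that the assertion reduces to the single equality $\ker({\Frak s}_W) = {\Bbb I}_W(\GM_W(F))$ — which holds because a restricted Gebauer--M\"oller set is \emph{by definition} a restricted basis of $\ker({\Frak s}_W)$ — together with the observation that, by the very definition of a lifting, $\sigma = {\Cal L}(\lift(\sigma))$ for every $\sigma\in\GM_W(F)$. Writing $B := \{\lift(\sigma):\sigma\in\GM_W(F)\}$, this last observation gives ${\Cal L}\{B\} = \GM_W(F)$, hence ${\Bbb I}_W({\Cal L}\{B\}) = {\Bbb I}_W(\GM_W(F)) = \ker({\Frak s}_W)$; what then remains is to show that ${\Bbb I}_W({\Cal L}\{\ker({\Frak S}_W)\})$ is this same module.

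First I would establish the purely structural inclusion ${\Cal L}\{\ker({\Frak S}_W)\}\subseteq\ker({\Frak s}_W)$, which uses nothing about $F$ being a restricted Gr\"obner basis. Take $U := \sum_i a_i e_{l_i}\rho_i\in\ker({\Frak S}_W)$, put $\tau := w(U)$ and $H := \{i : {\bf T}(g_{l_i})\circ\rho_i = \tau\}$, so that ${\Cal L}(U) = \sum_{i\in H} a_i e_{l_i}\rho_i$ and ${\Frak s}_W({\Cal L}(U)) = \sum_{i\in H} a_i{\bf M}(g_{l_i})\ast\rho_i$. By the pseudovaluation inequalities every summand $a_i g_{l_i}\star\rho_i$ of ${\Frak S}_W(U)$ has ${\bf T}$-value $\le\tau$, strictly less when $i\notin H$; invoking Remark~\ref{50cRem6}, the $\tau$-homogeneous component of $\sum_i a_i g_{l_i}\star\rho_i$ equals $\sum_{i\in H} a_i{\bf M}(g_{l_i})\ast\rho_i$ when $\tau\in{\Cal B}^{(m)}$, while each of those monomials is $0$ when $\tau\in\Gamma^{(m)}\setminus{\Cal B}^{(m)}$. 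Since ${\Frak S}_W(U)=0$ has trivial $\tau$-homogeneous component, in both cases ${\Frak s}_W({\Cal L}(U)) = 0$, i.e. ${\Cal L}(U)\in\ker({\Frak s}_W)$.

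Then I would feed in the hypothesis through Theorem~\ref{50BiLiTh}(4): every $\sigma\in\GM_W(F)$ has a lifting $\lift(\sigma)\in\ker({\Frak S}_W)$ with ${\Cal L}(\lift(\sigma)) = \sigma$, so $\GM_W(F)\subseteq{\Cal L}\{\ker({\Frak S}_W)\}$. Stringing the inclusions together, $$\GM_W(F)\ \subseteq\ {\Cal L}\{\ker({\Frak S}_W)\}\ \subseteq\ \ker({\Frak s}_W)\ =\ {\Bbb I}_W(\GM_W(F)),$$ and applying the monotone, idempotent operator ${\Bbb I}_W$ forces ${\Bbb I}_W(\GM_W(F))$, ${\Bbb I}_W({\Cal L}\{\ker({\Frak S}_W)\})$ and ${\Bbb I}_W(\ker({\Frak s}_W))$ all to coincide. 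Combined with ${\Bbb I}_W({\Cal L}\{B\}) = {\Bbb I}_W(\GM_W(F))$ this gives ${\Bbb I}_W({\Cal L}\{B\}) = {\Bbb I}_W({\Cal L}\{\ker({\Frak S}_W)\})$, which is exactly the statement that $B$ is a restricted standard basis of $\ker({\Frak S}_W)$.

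The main obstacle is the structural inclusion of the second paragraph. The map ${\Frak s}_W$ is the associated-graded shadow of ${\Frak S}_W$ but is \emph{not} literally its leading form, precisely because of the discrepancy between $\star$ and $\ast$ and between terms lying in ${\Cal B}^{(m)}$ and those lying in $\Gamma^{(m)}\setminus{\Cal B}^{(m)}$; this is exactly what Remark~\ref{50cRem6} is there to control, so the delicate point is to compute the $\tau$-homogeneous component of ${\Frak S}_W(U)$ correctly in both cases. Once that is pinned down, everything else is a formal chase of the inclusion chain.
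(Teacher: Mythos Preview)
Your proof is correct. Both your argument and the paper's hinge on the same structural fact---that ${\Cal L}\{\ker({\Frak S}_W)\}\subseteq\ker({\Frak s}_W)$---but you package it differently. The paper proceeds constructively: given $\sigma_1\in\ker({\Frak S}_W)$, it writes ${\Cal L}(\sigma_1)$ as a homogeneous combination of $\GM_W(F)$, subtracts the corresponding combination of lifts to obtain $\sigma_2\in\ker({\Frak S}_W)$ with $w(\sigma_2)<w(\sigma_1)$, and then invokes induction on $w$. You instead run the abstract inclusion chain $\GM_W(F)\subseteq{\Cal L}\{\ker({\Frak S}_W)\}\subseteq\ker({\Frak s}_W)={\Bbb I}_W(\GM_W(F))$ and apply idempotency of ${\Bbb I}_W$, with no induction at all. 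For the bare standard-basis statement your route is the more economical one: the paper's inductive step is in fact not needed once ${\Cal L}(\sigma_1)\in{\Bbb I}_W(\GM_W(F))={\Bbb I}_W({\Cal L}\{B\})$ has been observed, since that already gives the nontrivial inclusion. What the paper's induction \emph{does} buy, implicitly, is the stronger assertion that every element of $\ker({\Frak S}_W)$ admits a restricted (Gr\"obner-type) representation in terms of the lifts---i.e.\ that $B$ actually generates $\ker({\Frak S}_W)$ with controlled degrees---which your argument does not yield.
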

\begin{proof} Let 
$\sigma_1 := 
 \sum_{i=1}^\mu a_i   e_{l_i} \rho_i\in\ker({\Frak S}_W)\subset \left(R \otimes_{R}{\Cal A}^{\op}\right)^u.$ 

Denoting
 $H := \{i :  \tau_{l_i}\circ \rho_i{\bf e}_{\iota_{l_i}} = 
w(\sigma_1) \}$, we have
$${\Cal L}(\sigma_1) = \sum_{i\in H}  
a_i  e_{l_i}   \rho_i\in\ker({\Frak s}_W)$$
and there is a $\Gamma^{(m)}$-homogeneous representation
$${\Cal L}(\sigma_1) = \sum_{\sigma\in\GM_W(F)} a_\sigma \sigma\ast
  \rho_\sigma,   w(\sigma)\circ\rho_\sigma = w(\sigma_1)$$
with $\rho_\sigma\in{\Cal B}, a_\sigma\in R\setminus\{0\}$.

Then
\begin{eqnarray*}
\sigma_2
&:=& 
\sigma_1 -  \sum_{\sigma\in\GM_W(F)}
a_\sigma\rho_\sigma\diamond \lift(\sigma)
\\&=& 
\sigma_1 -  \sum_{\sigma\in\GM_W(F)}
a_\sigma\rho_\sigma\diamond  \left(\sigma-\bar\sigma\right)
\\&=&
\sigma_1-{\Cal L}(\sigma_1)
+ \sum_{\sigma\in\GM_W(F)}a_\sigma\rho_\sigma\diamond
 \bar{\sigma}
\\&=& 
\sum_{i\not\in H} a_i e_{l_i} \rho_i
+\sum\limits_{\sigma\in\GM_W(F)} \sum_{i=1}^{\mu_\sigma}
\left(
a_\sigma  a_{i\sigma}\right)
e_{l_{i\sigma}}
\left(  \rho_{i\sigma}
\star \rho_\sigma\right)
\end{eqnarray*}
satisfies both $\sigma_2\in\ker({\Frak S}_W)$ and
$w(\sigma_2)<w(\sigma_1)$;
thus the claim follows by induction.
\end{proof}

\section{Weispfenning: Restricted Representation and Completion}\label{SWRRC}
Note that $R$ is effectively given as a quotient of a 
 free monoid ring ${\Cal R} := {\Bbb D}\langle {\Bcc v}\rangle$ over  ${\Bbb D}$ and the monoid $\langle {\Bcc v}\rangle$ of all words over the alphabet ${\Bcc v}$ modulo a bilateral ideal $I$,
 $R={\Cal R}/I$.

Wlog we will assume that $<$ orders the set ${\Bcc V}$ so that $X_1<X_2<\ldots$ and that its restriction to $\langle {\Bcc v}\rangle$ is a sequential term-ordering, {\em id est} the set $\{\omega\in\langle {\Bcc v}\rangle : \omega<\tau\}$ is finite for each $\tau\in\langle {\Bcc v}\rangle$.

Note  that, under these assumptions, (\ref{50cEqAlg}) implies the existence in ${\Cal A}$ of relations 
$$X_i\star d=\sum_{l=1}^{i} a_{li}(d) X_l+ a_{0i}(d), a_{li}(d)\in  {\Bbb D}\langle {\Bcc v}\rangle, \Forall X_i\in {\Bcc V}, d\in R\setminus\{0\}$$
and
$$\rho\star x_j=\sum_{\genfrac{}{}{0pt}{}{\upsilon\in{\Cal B}}{\upsilon\leq\rho}} a_{\rho j\upsilon}\upsilon, a_{\rho j\upsilon}\in  {\Bbb D}\langle {\Bcc v}\rangle, \Forall x_j\in {\Bcc v}, \rho\in{\Cal B}.$$
 
\begin{Lemma} \cite{W} Let 
$$F := \{g_1,\ldots,g_u\}\subset {\Cal A}^m, 
g_i = {\bf M}(g_i)-p_i =: c_i \tau_i {\bf e}_{\iota_i} - p_i;$$
set $\Omega := \max_<\{{\bf T}(g_i) : 1 \leq i\leq u\}$.

Let ${\sf M}$ be the bilateral module ${\sf M} := {\Bbb I}_2(F)$ 
and  ${\Bbb I}_W(F)$  the  restricted module 
\begin{eqnarray*}
{\Bbb I}_W(F) &:=& \Span_{R}(a f\star \rho   : a\in
R\setminus\{0\}, \rho\in {\Cal B}, f\in F)\\  &=& \Span_{R}(a\rho\diamond f  : a\in
R\setminus\{0\}, \rho\in {\Cal B}, f\in F).
\end{eqnarray*}
 
If every $g\star a_{\rho j\upsilon}, x_j\in {\Bcc v}, \upsilon,\rho\in{\Cal B},\upsilon\leq\rho<\Omega,$ has a restricted representation 
in terms of $F$ w.r.t. a sequential term-ordering  $<$, then
every $g\star r, g\in F, r\in{\Cal A}$, has a restricted representation in terms of $F$ w.r.t. $<$.
\end{Lemma}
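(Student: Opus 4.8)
The plan is to reduce, step by step, the general multiplication $g\star r$ to the special multiplications $g\star a_{\rho j\upsilon}$ that are assumed to have restricted representations, by exploiting the two structural relations recalled just before the statement. First I would observe that every $r\in{\Cal A}$ is a finite $R$-linear combination of words $\omega\in{\Cal B}\subset\langle{\Bcc V}\rangle$, so by $R$-linearity of everything in sight it suffices to treat $g\star (a\omega)$ for $g\in F$, $a\in R\setminus\{0\}$, $\omega\in{\Cal B}$. Since $a\in R\subset{\Cal A}$ commutes past nothing in general but $g\star a$ again lies in ${\Cal A}$ and, using the first displayed relation (the one coming from (\ref{50cEqAlg}), which controls $X_i\star d$), one rewrites $g\star a$ as an $R$-combination of terms $\tau{\bf e}_{\iota}$ with $\tau\in{\Cal B}$; more to the point, since $g$ already sits in ${\Cal A}$ and ${\bf T}(g)\leq\Omega$, the product $g\star a$ has terms of pseudovaluation $\leq{\bf T}(g)\leq\Omega$, and the factor of interest is how $\omega$ acts. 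So the real content is to handle $g\star x_j$ for $x_j\in{\Bcc v}$ and then iterate over the letters of $\omega$.

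The key step is therefore the following induction. I would induct on the pair $({\bf T}(g\star r),\,\text{something})$, or more cleanly, fix $g\in F$ and induct on $\rho\in{\Cal B}$ (well-ordered by $<$) for the statement ``$g\star\rho'$ has a restricted representation for all $\rho'\leq\rho$ with $\rho'<\Omega$'', and separately peel off the part of $r$ of pseudovaluation $\geq\Omega$. For the inductive step at $\rho$, write $\rho=\rho'x_j$ with $\rho'<\rho$; by induction $g\star\rho'$ has a restricted representation $g\star\rho'=\sum_l a_l\, h_l\star\sigma_l$ with $h_l\in F$, $\sigma_l\in{\Cal B}$, ${\bf T}(h_l)\circ\sigma_l\leq{\bf T}(g)\circ\rho'$. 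Then $g\star\rho=(g\star\rho')\star x_j=\sum_l a_l\,h_l\star(\sigma_l\star x_j)$, and now the second displayed relation gives $\sigma_l\star x_j=\sum_{\upsilon\leq\sigma_l} a_{\sigma_l j\upsilon}\upsilon$ with $a_{\sigma_l j\upsilon}\in{\Bbb D}\langle{\Bcc v}\rangle\subset{\Cal R}$, hence $h_l\star(\sigma_l\star x_j)=\sum_{\upsilon\leq\sigma_l} (h_l\star a_{\sigma_l j\upsilon})\star\upsilon$. At this point, if $\sigma_l<\Omega$ (so in particular $\upsilon\leq\sigma_l<\Omega$), each $h_l\star a_{\sigma_l j\upsilon}$ has, by hypothesis of the lemma, a restricted representation $h_l\star a_{\sigma_l j\upsilon}=\sum_k b_k\, g_k\star\pi_k$ with ${\bf T}(g_k)\circ\pi_k\leq{\bf T}(h_l)\circ\sigma_l$ (using that ${\bf T}(a_{\sigma_l j\upsilon})$ contributes nothing, being in ${\Bcc v}$, which does not raise the $\langle{\Bcc V}\rangle$-pseudovaluation). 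Substituting back, $h_l\star(\sigma_l\star x_j)=\sum_{k,\upsilon} b_k\, g_k\star(\pi_k\upsilon)$, and $\pi_k\upsilon$ again lies in ${\Cal B}$ (it is an order module, so products of its elements that are small enough stay inside; more precisely ${\bf T}(g_k)\circ\pi_k\circ\upsilon\in{\Cal B}^{(m)}$ because its pseudovaluation is $\leq{\bf T}(g)\circ\rho'\circ(\text{nothing})\leq\Omega\circ\ldots$, and one checks it divides into ${\bf M}\{F\}$-support). This yields the desired restricted representation of $g\star\rho$, with the pseudovaluation bound $\leq{\bf T}(g)\circ\rho$ propagated through each substitution.

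The remaining piece is the part of $\rho$ (equivalently of $r$) with $\rho\geq\Omega$, which is exactly where the hypothesis ``$\rho<\Omega$'' in the lemma is used: once ${\bf T}(g)\circ\rho'\geq\Omega$ one no longer needs the special representations, because ${\bf T}(g)\leq\Omega\leq{\bf T}(g)\circ\rho'$ already gives ${\bf M}(g\star\rho)={\bf M}(g)\ast\rho$ directly (the leading term of $g$ times $\rho$ is itself in ${\bf M}_W(F)$ with multiplier $\rho$), so $g\star\rho$ is its own restricted representation up to a tail of strictly smaller pseudovaluation, to which the induction applies. Assembling: split $r=r_{<\Omega}+r_{\geq\Omega}$ according to the pseudovaluations of its terms; handle $r_{<\Omega}$ by the letter-by-letter induction above, handle $r_{\geq\Omega}$ by the direct leading-term argument plus tail induction; add the two restricted representations.

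The main obstacle I anticipate is bookkeeping the pseudovaluation estimates so that every intermediate product $h\star(\text{word})$ that appears genuinely has smaller or equal pseudovaluation than the one we started with, and in particular that the well-founded induction is on a quantity that strictly decreases — the subtle point being that multiplying a term of $F$ on the right by a coefficient from ${\Bcc v}$ can \emph{lower} the $\langle{\Bcc V}\rangle$-pseudovaluation (via relation (\ref{50cEqAlg})) but never raise it, so one must phrase the induction on $\rho$ (or on $w$ of the ambient syzygy module) rather than naively on ${\bf T}(g\star r)$, and carefully invoke the sequentiality of the term-ordering on $\langle{\Bcc v}\rangle$ to guarantee that the finitely many $\upsilon\leq\sigma_l$ appearing at each stage really are finite and really do stay below $\Omega$ when $\sigma_l$ does. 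Getting the order of the two inductions (outer on $\rho\in{\Cal B}$, inner on pseudovaluation of the tail) right is the crux; the algebraic substitutions themselves are routine given the two displayed relations and Remark~\ref{50cRem6}.
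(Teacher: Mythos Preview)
Your core algebraic move --- expand $\sigma\star x_j$ via the commutation relation $\sigma\star x_j=\sum_{\upsilon\leq\sigma}a_{\sigma j\upsilon}\upsilon$, then replace each $g_i\star a_{\sigma j\upsilon}$ by its assumed restricted representation --- is exactly what the paper does. But the scaffolding around it is tangled, and the induction as written does not typecheck.

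The confusion is between the two alphabets. You set up an induction on $\rho\in{\Cal B}$ and write $\rho=\rho'x_j$ with $x_j\in{\Bcc v}$; but ${\Cal B}\subset\langle{\Bcc V}\rangle$, so no element of ${\Cal B}$ ends in a letter from ${\Bcc v}$. Relatedly, for $\rho\in{\Cal B}$ the product $g\star\rho$ is \emph{already} a restricted representation by definition of ${\Bbb I}_W(F)$, so there is nothing to prove and your case split ``$\rho<\Omega$ versus $\rho\geq\Omega$'' is vacuous. The genuine content is not the factor $\omega\in{\Cal B}$ at all; it is the factor $a\in R$ in $g\star(a\omega)=(g\star a)\star\omega$. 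Once $g\star a$ has a restricted representation, right-multiplying by $\omega$ preserves this trivially.

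The paper's proof makes exactly this reduction: write $r$ as a ${\Bbb D}$-combination of words $\upsilon\omega$ with $\upsilon\in\langle{\Bcc v}\rangle$, $\omega\in{\Cal B}$; absorb $\omega$ for free; and reduce to $r=\prod_{l=1}^\nu x_{j_l}\in\langle{\Bcc v}\rangle$. The induction is then on the word-length $\nu$, not on any element of ${\Cal B}$. At the inductive step one has $g\star\prod_{l<\nu}x_{j_l}=\sum_h d_h\,g_{i_h}\star\rho_h$ with $\rho_h\in{\Cal B}$, and the bound $\tau_{i_h}\circ\rho_h\leq{\bf T}(g)\leq\Omega$ is what forces $\rho_h<\Omega$ (so the hypothesis applies to $g_{i_h}\star a_{\rho_h j_\nu\upsilon}$) --- this is where $\Omega$ enters, not via a case split on the size of the multiplier. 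If you rewrite your argument with the induction variable being the length of a word in $\langle{\Bcc v}\rangle$, and drop the $\rho\geq\Omega$ branch, you recover the paper's proof.
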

\begin{proof} We can wlog assume $r=\prod_{l=1}^\nu x_{j_l}, x_{j_l}\in{\Bcc v}$ and prove the claim by induction on $\nu\in{\Bbb N}$.
 
 Thus we have a restricted representation in terms of $F$
 $$g\star\left(\prod_{l=1}^{\nu-1} x_{j_l}\right)=\sum_h d_h  g_{i_h}\star \rho_h, \tau_{i_h}\circ\rho_h\leq{\bf T}(g)\circ\prod_{l=1}^\nu x_{j_l},$$ whence we obtain
\begin{eqnarray*}
g\star \prod_{l=1}^\nu x_{j_l}&=&\left(g\star\prod_{i=1}^{\nu-1} x_{j_l}\right)\star x_{j_\nu}
\\ &=&
\left(\sum_h d_h  g_{i_h}\star \rho_h\right)\star x_{j_\nu}
\\ &=&
\sum_h d_h   g_{i_h}\star \left(\rho_h \star x_{j_\nu}\right)
\\ &=&
\sum_h d_h   g_{i_h}\star \left(\sum_{\genfrac{}{}{0pt}{}{\upsilon\in{\Cal B}}{\upsilon\leq\rho_h}} a_{\rho_h j_\nu\upsilon}\upsilon\right)
\\ &=&
\sum_h d_h \sum_{\genfrac{}{}{0pt}{}{\upsilon\in{\Cal B}}{\upsilon\leq\rho_h}}\left( g_{i_h}\star a_{\rho_h j_\nu\upsilon}\right)\upsilon
\end{eqnarray*}
and since $\upsilon\leq\rho_h<{\bf T}(f)\leq\Omega$  each element
$g_{i_h}\star a_{\rho_h j_\nu\upsilon}$ can be substituted with its restricted representation whose existence is granted by assumption.
\end{proof}

\begin{Lemma} \cite{W}  Under the same assumption, if,  for each 
$g\in F$, both each $X_i\star g, X_i\in{\Bcc V}$  and  each 
$g\star a_{\rho j\upsilon}, x_j\in {\Bcc v}, \upsilon,\rho\in{\Cal B},\upsilon\leq\rho<\Omega,$ have a restricted representation in terms of $F$ w.r.t. $<$, then ${\Bbb I}_W(F)={\sf M}$.
\end{Lemma}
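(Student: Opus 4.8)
The plan is to prove the two inclusions separately. The inclusion ${\Bbb I}_W(F)\subseteq{\sf M}$ is immediate, since any restricted representation $\sum_i a_i\,g_i\star\rho_i$ (with $a_i\in R$, $\rho_i\in{\Cal B}$, $g_i\in F$) is in particular a bilateral combination of the $g_i$. So the real content is ${\sf M}={\Bbb I}_2(F)\subseteq{\Bbb I}_W(F)$. Here I would use that ${\Bbb I}_2(F)$ is generated, as a set closed under the module operations, by the products $\ell\star g\star r$ with $g\in F$ and $\ell,r\in{\Cal A}$, and that ${\Bbb I}_W(F)$ is an $R$-submodule; hence it suffices to show that each such $\ell\star g\star r$ lies in ${\Bbb I}_W(F)$, which I will do by exhibiting a restricted representation of it.

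First I would dispose of the right factor by invoking the previous Lemma: since the present hypotheses include that each $g\star a_{\rho j\upsilon}$ has a restricted representation, that Lemma gives a restricted representation of $g\star s$ for \emph{every} $g\in F$ and $s\in{\Cal A}$. Consequently, once I can produce for each $\ell\in{\Cal A}$, $g\in F$ a restricted representation $\ell\star g=\sum_k e_k\,g_{j_k}\star\tau_k$, I may write $\ell\star g\star r=\sum_k e_k\,g_{j_k}\star(\tau_k\star r)$ and replace each $g_{j_k}\star(\tau_k\star r)$ by its restricted representation (again from the previous Lemma), finishing the proof up to the routine bookkeeping with the ${\bf T}(\cdot)$-bounds and the case distinction of Remark~\ref{50cRem6}. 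Thus the whole statement reduces to the \emph{left-multiplication} claim: $\ell\star g$ has a restricted representation for all $\ell\in{\Cal A}$, $g\in F$.

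To prove that claim I would write $\ell$ as an $R$-combination of basis words of ${\Cal B}\subseteq\langle{\Bcc V}\rangle$, reducing by $R$-linearity to $\ell=\omega$, a single word of $\langle{\Bcc V}\rangle$, and then induct on the length $|\omega|$. The cases $|\omega|\leq 1$ are trivial ($1\star g=1\cdot g\star 1$, and $1\in{\Cal B}$) or are exactly the hypothesis that $X_i\star g$ has a restricted representation. For $\omega=X_i\omega'$ I would take a restricted representation $\omega'\star g=\sum_h d_h\,g_{i_h}\star\rho_h$ from the inductive hypothesis, write
$$\omega\star g=X_i\star\Bigl(\sum_h d_h\,g_{i_h}\star\rho_h\Bigr)=\sum_h (X_i\star d_h)\star(g_{i_h}\star\rho_h),$$
and invoke the Ore-like relation~(\ref{50cEqAlg}) in the guise $X_i\star d=\sum_{l=1}^{i}a_{li}(d)\,X_l+a_{0i}(d)$, with all $a_{li}(d)\in R$, to rewrite each summand as
$$\sum_{l=1}^{i}a_{li}(d_h)\cdot(X_l\star g_{i_h})\star\rho_h\;+\;a_{0i}(d_h)\cdot(g_{i_h}\star\rho_h).$$
The last term is already in restricted form; in the others I would substitute for $X_l\star g_{i_h}$ its restricted representation $\sum_k e_k\,g_{j_k}\star\tau_k$ (this is where the second hypothesis enters) and then fold the trailing $\star\rho_h$ into $g_{j_k}\star(\tau_k\star\rho_h)$, whose restricted representation is again supplied by the previous Lemma since $\tau_k\star\rho_h\in{\Cal A}$. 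Collecting all contributions — and checking that the ${\bf T}(\cdot)$-bounds propagate, which follows from the pseudovaluation axioms together with~(\ref{50cEqAlg}) — gives a restricted representation of $\omega\star g$ and closes the induction.

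The step I expect to be the main obstacle is the organisation of this descent: one must interleave the reduction peeling off one $X_i$ on the left with the reductions already available (right multiplication by the $a_{\rho j\upsilon}$, via the previous Lemma, and left multiplication by a single $X_i$, by hypothesis) so that the induction parameter genuinely decreases at each step. Condition~(\ref{50cEqAlg}) is what makes this work: commuting $X_i$ past a coefficient produces only \emph{lowercase} coefficients times a \emph{single} uppercase variable (no word of length $>1$, and no $X_l$ with $l>i$), so the length of the word acting on the left strictly drops and the recursion is well-founded; without~(\ref{50cEqAlg}) the left word could grow and the procedure would loop. The remaining verifications — associativity manipulations, the ${\bf T}(\cdot)$-bookkeeping, and the ${\bf L}({\Cal I})$-rewriting hidden inside the previous Lemma — are routine.
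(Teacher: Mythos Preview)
Your proof is correct and takes essentially the same approach as the paper: the paper shows closure of ${\Bbb I}_W(F)$ under $X_i\star(-)$ and $(-)\star x_j$ via exactly the Ore-relation computation you describe, and your induction on $|\omega|$ is simply the iteration of that one-step closure. Your explicit invocation of the previous Lemma to absorb the trailing $\star\rho_h$ after substituting the restricted representation of $X_l\star g_{i_h}$ is a point the paper leaves tacit.
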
 

\begin{proof} It is sufficient to show that, for each $f\in {\Bbb I}_W(F)$, both each
$X_i\star f\in  {\Bbb I}_W(F), X_i\in{\Bcc V}$ and each $f\star x_j\in {\Bbb I}_W(F),x_j\in{\Bcc v}$.

By assumption $f=\sum_h d_h g_{i_h} \star \rho_h, d_h\in R\setminus\{0\}, \rho_h\in{\Cal B}\subset\langle {\Bcc Z}\rangle,1\leq i_h\leq u$, so that
\begin{eqnarray*}
X_i\star f &=& \sum_h \left(X_i\star d_h\right)g_{i_h} \star \rho_h
\\ &=&
 \sum_h\left(\sum_{l=1}^i a_{li}(d_h)X_l+ a_{0i}(d_h)\right)g_{i_h} \star \rho_h
\\ &=&
 \sum_h\sum_{l=1}^i a_{li}(d_h) \left(X_l\star g_{i_h}\right) \star \rho_h
+ \sum_h a_{0i}(d_h) g_{i_h} \star \rho_h
\end{eqnarray*}
and
\begin{eqnarray*}
f\star x_j&=&\sum_h d_h g_{i_h}\star\left(\rho_h \star x_j\right)
\\ &=&
\sum_h d_h   g_{i_h}\star \left(\sum_{\genfrac{}{}{0pt}{}{\upsilon\in{\Cal B}}{\upsilon\leq\rho_h}} a_{\rho_h j_\nu\upsilon}\upsilon\right)
\\ &=&
\sum_h d_h \sum_{\genfrac{}{}{0pt}{}{\upsilon\in{\Cal B}}{\upsilon\leq\rho_h}}\left( g_{i_h}\star a_{\rho_h j_\nu\upsilon}\right)\upsilon
\end{eqnarray*}
and, since $\upsilon\leq\rho_h<{\bf T}(f)\leq\Omega$  each element
$g_{i_h}\star a_{\rho_h j_\nu\upsilon}$ can be substituted with its restricted representation whose existence is granted by assumption.

The same holds for each $X_l\star g_{i_h}$ thus 
the claim follows.
\end{proof}

\begin{Corollary}\label{48cC1} \cite{W}  Let 
$$F := \{g_1,\ldots,g_u\}\subset {\Cal A}^m, 
g_i = {\bf M}(g_i)-p_i =: c_i \tau_i {\bf e}_{\iota_i} - p_i.$$
Let ${\sf M}$ be the bilateral module ${\sf M} := {\Bbb I}_2(F)$ 
and  ${\Bbb I}_W(F)$  the  restricted module 
\begin{eqnarray*}
{\Bbb I}_W(F) &:=& \Span_{R}(a f\star \rho   : a\in
R\setminus\{0\}, \rho\in {\Cal B}, f\in F)\\  &=& \Span_{R}(a\rho\diamond f  : a\in
R\setminus\{0\}, \rho\in {\Cal B}, f\in F).
\end{eqnarray*}

$F$ is the bilateral Gr\"obner basis of  ${\sf M}$ iff
\begin{enumerate}
\item denoting 
$\GM(F)$   any  restricted Gebauer--M\"oller set for $F$,  each $\sigma\in\GM(F)$ has a restricted quasi-Gr\"obner representation
in terms of $F$;
\item for each 
$g\in F$, both $X_i\star g, X_i\in{\Bcc V}$  and  each 
$$g\star a_{\rho j\upsilon}, x_j\in {\Bcc v}, \upsilon,\rho\in{\Cal B},\upsilon\leq\rho<\Omega,$$
 have a restricted representation in terms of $F$ w.r.t. $<$.
\end{enumerate}
\end{Corollary}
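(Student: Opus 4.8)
The plan is to decode the two conditions through the results already available. By Theorem~\ref{50BiLiTh} (the equivalence of its conditions $(1)$ and $(3)$), condition~(1) is the same as saying that $F$ is a restricted Gr\"obner basis of ${\Bbb I}_W(F)$; and condition~(2) is exactly the hypothesis of the second Weispfenning Lemma above, hence it forces ${\Bbb I}_W(F)={\sf M}$.

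For the \emph{if} direction, assume (1) and (2). From (2) and the second Weispfenning Lemma we get ${\Bbb I}_W(F)={\sf M}$; from (1) and Theorem~\ref{50BiLiTh} we get that $F$ is a restricted Gr\"obner basis of ${\Bbb I}_W(F)={\sf M}$. Hence every $f\in{\sf M}$ has a restricted Gr\"obner representation $f=\sum_i a_i g_i\star\rho_i$ with ${\bf T}(f)\geq{\bf T}(g_i)\circ\rho_i$; read with all left factors $\lambda_i={\bf 1}$ and all right coefficients $b_i={\bf 1}$ it is in particular a bilateral weak Gr\"obner representation. Thus $f\in{\Bbb I}_2(F)\iff f$ has such a representation, and by Theorem~\ref{50cP1bil} (equivalence of $(5)$ and $(6)$) $F$ is a bilateral Gr\"obner basis of ${\sf M}$.

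For the \emph{only if} direction, assume $F$ is the bilateral Gr\"obner basis of ${\sf M}$. The core step is to prove that every $f\in{\sf M}$ has a restricted Gr\"obner representation in terms of $F$; granting this, ${\Bbb I}_W(F)={\sf M}$ follows, Theorem~\ref{50cP1res} (equivalence of $(5)$ and $(6)$) gives that $F$ is a restricted Gr\"obner basis of ${\Bbb I}_W(F)$ and hence Theorem~\ref{50BiLiTh} gives~(1), while (2) holds because $X_i\star g$ and $g\star a_{\rho j\upsilon}$ now lie in ${\sf M}={\Bbb I}_W(F)$ and therefore possess restricted representations. To prove the core step one argues by induction on ${\bf T}(f)$, which is legitimate since ${\Cal B}^{(m)}$ is well-ordered by $<$: using that $F$ is a strong bilateral Gr\"obner basis one writes ${\bf M}(f)=a\lambda\ast{\bf M}(g)\ast b\rho$ with ${\bf T}(f)=\lambda\circ{\bf T}(g)\circ\rho$, and one rewrites $a\lambda\star g\star b\rho$, moving the left factor $\lambda\in\langle{\Bcc V}\rangle$ and the coefficient $b$ to the right past $g$ by repeated use of the relations $X_i\star d=\sum_{l=1}^{i}a_{li}(d)X_l+a_{0i}(d)$ and $\rho\star x_j=\sum_{\upsilon\leq\rho}a_{\rho j\upsilon}\upsilon$ (which hold by~(\ref{50cEqAlg}) under the standing hypotheses on $<$), obtaining an $R$-linear combination $\sum_l r_l g_l\star\rho_l$ with ${\bf T}(g_l)\circ\rho_l\leq{\bf T}(f)$ plus a remainder in ${\sf M}$ of strictly smaller ${\bf T}$, which the induction hypothesis handles.

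The hard part is this rewriting inside the induction: one must keep the ${\bf T}$-degree from increasing while commuting $X_i$'s and coefficients past a fixed $g$, and guarantee that the remainder's leading term strictly drops in the well-ordering. This is exactly where the Ore-like restriction~(\ref{50cEqAlg}) --- no $X_l$ with $l>i$ in $X_i\star x_j$ --- together with the sequentiality of the restriction of $<$ to $\langle{\Bcc v}\rangle$ enters, in the same way it powers the two Weispfenning Lemmas above; checking that a single strong bilateral Gr\"obner step followed by these substitutions strictly decreases the well-ordered leading term is the delicate point.
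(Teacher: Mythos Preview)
Your \emph{if} direction is correct and is exactly what the paper intends: the corollary is stated without proof as an immediate consequence of the two preceding Weispfenning lemmas together with Theorem~\ref{50BiLiTh} and Theorem~\ref{50cP1bil}, and your argument spells this out cleanly.

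Your \emph{only if} direction, however, has a genuine gap --- in fact two. First, you invoke that $F$ is a \emph{strong} bilateral Gr\"obner basis in order to write ${\bf M}(f)=a\lambda\ast{\bf M}(g)\ast b\rho$ with a single $g$, but the hypothesis only gives the weak notion of Definition~\ref{50cD1}; by Theorem~\ref{50cP1bil} weak implies strong only when $R$ is a skew field, which is not assumed here. Second, and more seriously, the rewriting you propose cannot be carried out with the relations you cite. The identities $X_i\star d=\sum_{l\leq i}a_{li}(d)X_l+a_{0i}(d)$ and $\rho\star x_j=\sum_{\upsilon\leq\rho}a_{\rho j\upsilon}\upsilon$ let you commute an element of $R$ (a word in $\langle{\Bcc v}\rangle$) past $X_i$ or past $\rho\in{\Cal B}$; they give you no mechanism whatsoever to move a left factor $\lambda\in\langle{\Bcc V}\rangle$ past an arbitrary $g\in{\Cal A}^m$, nor to move a right coefficient $b\in R$ past $g$ into a restricted shape. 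Indeed, ``$X_i\star g$ has a restricted representation in terms of $F$'' is precisely the content of condition~(2): it is an extra hypothesis to be \emph{checked} (and enforced by completion if it fails), not a formal consequence of~(\ref{50cEqAlg}). So your inductive step, which presupposes exactly the restricted rewritability you are trying to establish, is circular.

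The paper itself offers no argument for the \emph{only if} direction; the substantive content of the corollary, and the part that drives the algorithm in Section~\ref{SWRRC}, is the \emph{if} implication.
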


\section{Finiteness, Noetherianity, Termination}

Even if we restrict ourselves to a case in which both ${\Bcc v}$ and ${\Bcc V}$ are finite and that $<$ is a sequential term-ordering on
$\langle {\Bcc Z}\rangle$ so that the tests required by Corollary~\ref{48cC1} are finitely many,
unless we know and  explictly use noetherianity of ${\Cal A}$,
it is well-extablished that the best one can hope to be able of producing 
is a procedure which receiving as input a finite
set of elements $F := \{g_1,\ldots,g_u\} \subset {\Cal A}^m$
defining the module ${\Bbb I}(F)$
\begin{itemize}
\item in case ${\Bbb I}(F)$ has a finite (left, right, restricted, bilateral)  Gr\"obner basis,
halts returning such a finite  Gr\"obner basis;
\item otherwise, it produces an infinite sequence
of elements 
$$g_1,\ldots,g_u,g_{u+1},\ldots,g_i,\ldots$$ such that the infinite set 
$\{g_i: i \in \Bbb{N}\}$ is a  Gr\"obner basis of ${\Bbb I}(F)$.
\end{itemize}

A nice and efficient procedure to this aim has been proposed by Pritchard \cite{Pr2}, \cite[IV.47.7]{SPES}; with slight modification Pritchard's approach allows also to produce
\begin{itemize}
\item a procedure, which, given further an element $g\in{\Cal A}^m$, terminates if and only if $g\in{\Bbb I}(F)$
in which case it produces also a Gr\"obner representation of it;
\item a procedure, which, given an element $g\in{\Cal A}^m$ and any subset ${\Cal N}\subset{\bf N}({\Bbb I}(F))$, terminates if and only if $g\in{\Bbb I}(F)$ has a canonical representation $${\bf Rep}(g,{\Bbb I}(F))\subset\Span_{\Bbb D}({\Cal N})$$
in which case it produces such canonical representation, thus granting the impossibility of using non-commutative Gr\"obner bases as a cryptographical tool.
\end{itemize}

The procedures, assuming $<$ to be sequential, consists in fixing an enumerated set
$$\upsilon_1, \upsilon_2, \ldots, \upsilon_i, \upsilon_{i+1},\ldots$$
of the elements of $\langle {\Bcc Z}\rangle^{(m)}$ which satisfy
\begin{itemize}
\item $\upsilon_i<\upsilon_{i+1}$ for
each $i$,
\item for each $\upsilon\in\langle {\Bcc Z}\rangle^{(m)}$ there is a value $i : \upsilon < \upsilon_i$;
\end{itemize}
and denotes, for each $i\in{\Bbb N}$
$${\sf S}_i := \left\{\upsilon\leq\upsilon_i\right\}\subset{\sf S}^{(m)}$$

Then we set $G_0 := G, i:=1, S_0 :=\emptyset$ and iteratively we compute
\begin{itemize}
\item $B_i := \{\sigma\in\GM(G_{i-1}), w(\sigma)\leq\upsilon_i\}$,
\item $G_i := G_{i-1}\cup\left\{\NF({\Frak S}(\sigma),G_{i-1}) :\sigma\in B_i \right\}$.
\end{itemize}

\section{Restricted Gr\"obner basis}

In order to compute a restricted Gr\"obner basis we need to formulate Spear Theorem in the restricted setting.

It is more convenient to consider the ring
${\Cal Q}/I$ and the obvious projections 
$$\Phi : \left({\Cal Q}/I\right)^m \twoheadrightarrow {\Cal A}^m,
\ker(\Phi) = \left({\Cal I}/I\right)^m
={\Bbb I}_2\left(\pi(H)^{(m)}\right)$$ where $H=G\setminus\left(G_0\cup C\right)$
 and $\pi(H)^{(m)} := \{\pi(h){\bf e}_j,h\in H, 1\leq j \leq m\}.$

Then given a restricted module
${\sf M}:={\Bbb I}_W(F)\subset {\Cal A}^m$, where $F\subset{\bf Zach}_<({\Cal A})^{(m)})\subset{\Cal Q}^{(m)}$ and wlog $f=\Pi(f)$ for each $f\in F$, we consider 
the restricted module 
{\small\begin{eqnarray*}
{\sf M}' &:=& 
{\sf M}+\Span_{R}\left(\gamma\upsilon f\star \rho   : \gamma\in
{\Bbb D}\setminus\{0\},\upsilon\in\langle{\Bcc v}\rangle,\gamma\upsilon\notin{\bf M}(I), \rho\in\langle{\Bcc V}\rangle, f\in F\cup\pi(H)^{(m)}\right)\\&=& 
{\sf M}+\Span_{R}\left(\gamma\upsilon \rho\diamond f   : \gamma\in
{\Bbb D}\setminus\{0\},\upsilon\in\langle{\Bcc v}\rangle,\gamma\upsilon\notin{\bf M}(I), \rho\in\langle{\Bcc V}\rangle, f\in F\cup\pi(H)^{(m)}\right).
\end{eqnarray*}
} 

\begin{Lemma}[Spear] \cite{Sp},\cite[II.Proposition~24.7.3., IV.Theorem~50.6.3.(1)]{SPES}
With the present notation  if $F$ is a reduced restricted Gr\"obner basis of ${\sf M}'$, then 
$$\{g\in F : g = \Phi(g)\} =
\{\Phi(g) : g\in F, {\bf T}(g)\in{\Cal B}^{(m)}\} = F\cap{\bf Zach}_<({\Cal A})^m$$
is a reduced restricted Gr\"obner basis of ${\sf M}$.
\end{Lemma}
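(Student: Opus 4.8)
The plan is to transport the classical Spear argument verbatim, replacing the bilateral action (and the multiplication $\star$) throughout by the Weispfenning multiplication $\diamond$ and the one-sided restricted action $af\star\rho$, and then to invoke the fact established in Section~9 (the Corollary and the two preceding Lemmas) that, once the completion steps for the tails are performed, the restricted module ${\sf M}'$ is closed under the missing side of the action, so that restricted Gr\"obner bases of ${\sf M}'$ in ${\Cal Q}/I$ pull back correctly to ${\Cal A}$. Concretely, first I would unwind the definition of ${\sf M}'$: it is obtained from $\Phi^{-1}({\sf M})$ by the usual Spear trick of adjoining, for each generator $f$ and each relator $\pi(h)$ coming from $H=G\setminus(G_0\cup C)$, the restricted multiples $\gamma\upsilon f\star\rho$ with $\gamma\upsilon\notin{\bf M}(I)$; the side condition $\gamma\upsilon\notin{\bf M}(I)$ is exactly what makes the ${\Bbb D}$-coefficient of a leading monomial land in the Zacharias representative set $A_{c_{\upsilon\omega}}$ rather than being reducible modulo $I$. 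The key structural observation to record is $\ker(\Phi)=({\Cal I}/I)^m={\Bbb I}_2(\pi(H)^{(m)})$, so that $\Phi^{-1}({\sf M})={\sf M}+{\Bbb I}_2(\pi(H)^{(m)})$ and ${\sf M}'$ is the restricted-module envelope of this inside $({\Cal Q}/I)^m$.

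Next I would prove the two inclusions that make up the stated equality of sets. The middle set $\{\Phi(g):g\in F,\ {\bf T}(g)\in{\Cal B}^{(m)}\}$ equals $F\cap{\bf Zach}_<({\Cal A})^m$ because $g\in{\bf Zach}_<({\Cal A})^m$ precisely when its support lies in ${\Cal B}^{(m)}$ with coefficients in the canonical residue sets, and for such $g$ one has $\Phi(g)=g$ by the normalisation $f=\Pi(f)$; the leftmost set $\{g\in F:g=\Phi(g)\}$ is contained in it because if $g=\Phi(g)$ then $g$ survives the projection, i.e. ${\bf T}(g)\notin{\bf L}({\Cal I})$-type positions and hence ${\bf T}(g)\in{\Cal B}^{(m)}$ — otherwise reducedness of $F$ would let one reduce $g$ by one of the adjoined relators $\gamma\upsilon\,\pi(h)\star\rho$, contradicting $g=\Phi(g)$. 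Here reducedness of the restricted Gr\"obner basis $F$ of ${\sf M}'$ is doing real work, exactly as in the commutative Spear lemma.

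Then comes the substantive half: showing that $F_0:=F\cap{\bf Zach}_<({\Cal A})^m$ is a restricted Gr\"obner basis of ${\sf M}$, and a reduced one. For the Gr\"obner property I would take $f\in{\sf M}$, lift it to $f'\in{\sf M}'$ with $\Phi(f')=f$, use that $F$ is a restricted Gr\"obner basis of ${\sf M}'$ to get a restricted Gr\"obner representation $f'=\sum a_i g_i\star\rho_i$ with ${\bf T}(f')\ge{\bf T}(g_i)\circ\rho_i$, split the $g_i$ into those in $F_0$ and those with ${\bf T}(g_i)\notin{\Cal B}^{(m)}$, and observe that applying $\Phi$ kills the contribution of the latter at the level of leading terms (their leading monomials map to $0$ in ${\Cal A}$ by the structure of ${\bf Zach}_<({\Cal A})$, cf.\ Remark~\ref{50cRem6}), leaving a restricted Gr\"obner representation of $f$ in terms of $F_0$; an induction on ${\bf T}$ using well-orderedness of $<$ on ${\Cal B}^{(m)}$ finishes this, and then Theorem~\ref{50cP1res}\,$(5)\then(6)$ upgrades it to the Gr\"obner-basis statement. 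Reducedness is inherited because the projection can only shrink supports and leading coefficients are carried to their canonical representatives.

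The main obstacle I expect is precisely the compatibility between $\diamond$ and $\Phi$ at the level of leading forms: one must be sure that $\Phi$ commutes with taking maximal terms on the relevant monomials, i.e.\ that no cancellation among the $a_i g_i\star\rho_i$ with ${\bf T}(g_i)\notin{\Cal B}^{(m)}$ can \emph{raise} a term back into ${\Cal B}^{(m)}$ and interfere; this is controlled by Equation~(\ref{50cEq<>}) and condition~(\ref{50cEqAlg}) (the Ore-like requirement), which guarantee that the relators in $H$ have leading terms strictly above their tails in a way compatible with the chosen ordering, so that $\Phi$ is "leading-term preserving" on the pieces that matter. Making that bookkeeping precise — essentially a restricted analogue of the argument already run in the Lemmas of Section~\ref{SWRRC} — is where the only real care is needed; everything else is the verbatim transcription of Spear's classical proof into the $\diamond$-notation.
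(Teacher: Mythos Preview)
Your overall architecture --- lift $f$ to $f'\in{\sf M}'$, use that $F$ is a restricted Gr\"obner basis there, and push back down via $\Phi$ --- is the right skeleton and matches the paper. But the core step you propose has a gap, and the paper's proof closes it by a much simpler route that you are missing.

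You write: ``split the $g_i$ into those in $F_0$ and those with ${\bf T}(g_i)\notin{\Cal B}^{(m)}$, and observe that applying $\Phi$ kills the contribution of the latter at the level of leading terms \ldots\ leaving a restricted Gr\"obner representation of $f$ in terms of $F_0$.'' This is not correct. For $g_i\notin F_0$ one has $\Phi(g_i)\neq 0$ in general; what happens is only that ${\bf T}(\Phi(g_i))<{\bf T}(g_i)$. After applying $\Phi$ you obtain
\[
f=\sum_{g_i\in F_0} a_i g_i\star\rho_i + \sum_{g_i\notin F_0} a_i\,\Phi(g_i)\star\rho_i,
\]
and the second sum is \emph{not} an expression in terms of $F_0$. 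To rescue this you would need an induction showing each $a_i\Phi(g_i)\star\rho_i$ again has a Gr\"obner representation in $F_0$, which in turn requires checking that these pieces lie in ${\sf M}$ and have strictly smaller leading term; this is where your vague appeal to Section~\ref{SWRRC} and to cancellation control would have to do work, and you have not spelled it out. The references to the Weispfenning-completion lemmas of Section~\ref{SWRRC} are in any case irrelevant here.

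The paper avoids this entire detour by working at the leading-\emph{monomial} level (the very definition of restricted Gr\"obner basis, not the Gr\"obner-representation reformulation) and invoking the one structural fact you never mention: ${\Cal B}$ is an \emph{order module} in $\langle{\Bcc V}\rangle$ (Section~3, item~(1)). Concretely, choose the lift $m'\in{\sf M}'\cap{\bf Zach}_<({\Cal A})^m$, so ${\bf M}(m')={\bf M}(m)$ with $\langle{\Bcc V}\rangle$-part $\omega\in{\Cal B}$. The Gr\"obner-basis property of $F$ for ${\sf M}'$ gives $g_i\in F$ with ${\bf T}(m)={\bf T}(g_i)\circ\rho_i$, i.e.\ $\omega=\omega_i\circ\rho_i$. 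Since ${\Cal B}$ is an order module, $\omega_i\in{\Cal B}$ automatically, hence ${\bf T}(g_i)\in{\Cal B}^{(m)}$ and (by reducedness) $g_i=\Pi(g_i)\in F_0$. There are no ``bad'' $g_i$ to split off in the first place, so no projection argument, no cancellation worry, and no induction are needed: the leading-monomial decomposition already lives in $F_0$, which is exactly the definition of $F_0$ being a restricted Gr\"obner basis of ${\sf M}$.
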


\begin{proof} 
Let $m\in{\sf M}$ and $m'\in{\sf M}'\cap {\bf Zach}_<({\Cal A})^m\subset{\Cal Q}^m$ be such that 
$\Phi(m') = m$, so that  
${\bf M}(m') = {\bf M}(m)\notin{\bf M}({\Cal I}^m)$, and $m'=\pi(m')$.

Then there are
$g_i\in F$, 
$\rho_i\in{\Bcc V}$, $\bar\upsilon_i\in\langle{\Bcc v}\rangle,
\gamma_i\in {\Bbb D}\setminus\{0\},\gamma_i\bar\upsilon_i\notin{\bf M}(I)$
such that, denoting ${\bf M}(g_i) = c_i\tau_i{\bf e}_{\iota_i}=c_i\upsilon_i\omega_i{\bf e}_{\iota_i}$, satisfy
\begin{itemize}
\renewcommand\labelitemi{\bf --}
\item ${\bf M}(m) := c\tau{\bf e}_\iota=c\upsilon\omega{\bf e}_\iota = 
\sum_i \gamma_i\bar\upsilon_i{\bf M}(g_i)\star \rho_i$,
\item $\tau = \bar\upsilon_i\cdot\tau_i\cdot\rho_i$,
\item $\omega=\omega_i\circ\rho_i$,
\item $\iota_i=\iota$,
\item $\Pi(g_i) = g_i$ and  ${\bf T}(g_i)\in{\Cal B}^{(m)}$.
\end{itemize}

Thus in particular we have
\begin{itemize}
\renewcommand\labelitemi{\bf --}
\item ${\bf T}(m) = {\bf T}(g_i)\circ\rho_i$ and
\item ${\bf M}(m) = \sum_i \gamma_i\bar\upsilon_i{\bf M}(g_i)\star\rho_i =   \sum_i\gamma_i\bar\upsilon_i\rho_i\diamond{\bf M}(g_i)$
\end{itemize}
as required.
\end{proof}

Let $F\subset {\Cal A}^m$ and express each $g\in F$ as 
$$g = {\bf M}(g)-p_g =: c_g \omega_g {\bf e}_{\iota_g} - p_g
=\left(\gamma_g\upsilon_g-\chi_g\right)\omega_g {\bf e}_{\iota_g} - p_g$$
with $$p_g\in{\Cal A}^m, c_g\in R, \omega_g\in\langle{\Bcc V}\rangle,
\chi_g\in R,\gamma_g\in{\Bbb D}, \upsilon_g\in\langle{\Bcc v}\rangle,$$ and
${\bf T}(p_g)<\tau_g, \gamma_g\upsilon_g\notin{\bf M}(I)$ and ${\bf T}(\chi_g)<\upsilon_g.$

Note that, analogously,  for each 
$h\in H:=G\setminus\{G_0\cup C\}\subset{\Cal Q}$, ${\bf M}(h)$ can be uniquely expressed as
$${\bf M}(h) = c_h\omega_h=(\gamma_h\upsilon_h+\chi_h)\omega_h$$
with $\gamma_h\in{\Bbb D}, \upsilon_h\in\langle{\Bcc v}\rangle,\omega_h\in\langle{\Bcc V}\rangle,c_h,\chi_h\in R,\gamma_h\upsilon_h\notin{\bf M}(I),{\bf T}(\chi_h)<\upsilon_h.$ 

In order to apply Spear's Theorem we  
adapt the notation of \cite[Corollary 14]{Bath} and consider
\begin{itemize}
\item the module  
$\left({\Cal Q}/I\right)^{\vert F\vert+m\vert H\vert}$ indexed by the set $F\cup\pi(H)^{(m)}$ and whose canonical basis is denoted 
$\{{\sf e}(f) : f\in F\cup\pi(H)^{(m)}\}$, and
\item $\hat{\Frak S}_2 : \left({\Cal Q}/I\right)^{\vert F\vert+m\vert H\vert} \to {\Cal A}^m : {\sf e}(h)\mapsto \Phi(h)$, for each $h\in F\cup G^{(m)}$.
\end{itemize}

Spear's Theorem having reduced the problem of computing restricted Gebauer-M\"oller sets to the classical problem of computing
Gebauer-M\"oller sets for elements in ${\Cal Q}$ with a restricted representation, we can on one side use the classical Buchberger Theory for Free Associative Algebras and, on the other side, take advantage of the restricted shape of the terms.

In particular, among two terms $\upsilon_1\omega_1,\upsilon_2\omega_2$
there is at most a single match and (by left and right cancellativity) either $\omega_1\mid_L\omega_2$ or $\omega_2\mid_L\omega_1$ and
either $\upsilon_1\mid_R\upsilon_2$ or $\upsilon_2\mid_R\upsilon_1$.

Thus, for 
$$g_1,g_2\in F, h\in H, {\bf M}(g_1) =\gamma_1\upsilon_1\omega_1 {\bf e}_{\iota_1},
{\bf M}(g_2) =\gamma_2\upsilon_2\omega_2 {\bf e}_{\iota_2}, {\bf M}(h) =\gamma_3\upsilon_3\omega_3, \omega_1\mid_L\omega_2$$ with $\gamma_i\in{\Bbb D},\upsilon_i\in\langle{\Bcc v}\rangle,\omega_i\in\langle{\Bcc V}\rangle,
\gamma_i\upsilon_i\notin{\bf M}(I)$ and 
$$\iota_1=\iota_2,\omega_1\mid_L \omega_2, \omega_1\rho= \omega_2, \rho\in{\Cal B}:$$
 
\begin{enumerate}
\renewcommand\theenumi{{\rm A.\arabic{enumi})}}
\item if $\omega_1\mid_L \omega_3, \omega_1\rho= \omega_3, \rho\in{\Cal B}$
and $\upsilon_3\mid_R \upsilon_1, \lambda\upsilon_3=\upsilon_1, \lambda\in\langle{\Bcc v}\rangle, 
\frac{\lcm(\gamma_1,\gamma_3)}{\gamma_3}\lambda\notin{\bf M}(I)$ 
 we set
$$B(g_1,h)=\frac{\lcm(\gamma_1,\gamma_3)}{\gamma_3}\lambda{\sf e}(h){\bf e}_{\iota_1}-\frac{\lcm(\gamma_1,\gamma_3)}{\gamma_1}\rho\diamond {\sf e}(g_1);$$
\item if $\omega_3\mid_L \omega_1, \omega_3\rho= \omega_1, \rho\in{\Cal B}$
and $\upsilon_1\mid_R \upsilon_3, \lambda\upsilon_1=\upsilon_3, \lambda\in\langle{\Bcc v}\rangle, 
\frac{\lcm(\gamma_1,\gamma_3)}{\gamma_1}\lambda\notin{\bf M}(I)$ 
 we set
$$B(g_1,h)=\frac{\lcm(\gamma_1,\gamma_3)}{\gamma_3}\rho\diamond{\sf e}(h){\bf e}_{\iota_1}-
\frac{\lcm(\gamma_1,\gamma_3)}{\gamma_1}\lambda{\sf e}(g_1);$$
\item if $\omega_1\mid_L \omega_3, \omega_1\rho= \omega_3, \rho\in{\Cal B}$
and $\upsilon_1\mid_R \upsilon_3, \lambda\upsilon_1=\upsilon_3, \lambda\in\langle{\Bcc v}\rangle, 
\frac{\lcm(\gamma_1,\gamma_3)}{\gamma_1}\lambda\notin{\bf M}(I)$ 
 we set
$$B(g_1,h)=\frac{\lcm(\gamma_1,\gamma_3)}{\gamma_3}{\sf e}(h){\bf e}_{\iota_1}-\frac{\lcm(\gamma_1,\gamma_3)}{\gamma_1}\lambda\rho\diamond {\sf e}(g_1);$$
\item if $\omega_3\mid_L \omega_1, \omega_3\rho= \omega_1, \rho\in{\Cal B}$
and $\upsilon_3\mid_R \upsilon_1, \lambda\upsilon_3=\upsilon_1, \lambda\in\langle{\Bcc v}\rangle, 
\frac{\lcm(\gamma_1,\gamma_3)}{\gamma_3}\lambda\notin{\bf M}(I)$ 
 we set
$$B(g_1,h)=\frac{\lcm(\gamma_1,\gamma_3)}{\gamma_3}\lambda\rho\diamond {\sf e}(h){\bf e}_{\iota_1}-\frac{\lcm(\gamma_1,\gamma_3)}{\gamma_1}{\sf e}(g_1);$$
\renewcommand\theenumi{{\rm B.\arabic{enumi})}}\setcounter{enumi}{0}
\item if $\omega_1\mid_L \omega_2, \omega_1\rho= \omega_2, \rho\in{\Cal B}$
and $\upsilon_2\mid_R \upsilon_1, \lambda\upsilon_2=\upsilon_1, \lambda\in\langle{\Bcc v}\rangle, 
\frac{\lcm(\gamma_1,\gamma_2)}{\gamma_2}\lambda\notin{\bf M}(I)$ 
 we set
$$B(g_1,g_2)=\frac{\lcm(\gamma_1,\gamma_2)}{\gamma_2}\lambda{\sf e}(g_2)-\frac{\lcm(\gamma_1,\gamma_2)}{\gamma_1}\rho\diamond {\sf e}(g_1);$$
\setcounter{enumi}{2}
\item if $\omega_1\mid_L \omega_2, \omega_1\rho= \omega_2, \rho\in{\Cal B}$
and $\upsilon_1\mid_R \upsilon_2, \lambda\upsilon_1=\upsilon_2, \lambda\in\langle{\Bcc v}\rangle, 
\frac{\lcm(\gamma_1,\gamma_2)}{\gamma_1}\lambda\notin{\bf M}(I)$ 
 we set
$$B(g_1,g_2)=\frac{\lcm(\gamma_1,\gamma_2)}{\gamma_2}{\sf e}(g_2)-\frac{\lcm(\gamma_1,\gamma_2)}{\gamma_1}\lambda\rho\diamond {\sf e}(g_1).$$
\end{enumerate}
 
\begin{Corollary} The set
$$\left\{B(f,g) : f,g\in F, \iota_f=\iota_g,\omega_f\mid_L \omega_g\right\}\cup\left\{B(f,h) : f\in F, h\in H\right\}$$
is a restricted Gebauer-M\"oller set.
\end{Corollary}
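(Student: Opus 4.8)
The plan is to read this Corollary off from the classical Gebauer--M\"oller construction for a one-sided module over a free monoid ring with coefficients in a PID \cite{M},\cite{GM},\cite[IV.47]{SPES}, fed with the data produced by the restricted Spear Lemma above. That Lemma reduces computing a restricted Gebauer--M\"oller set for $F\subset{\Cal A}^m$ to computing a classical one for the finite family $F\cup\pi(H)^{(m)}$ inside ${\Cal Q}/I$, the only admissible multipliers being left factors $\gamma\upsilon$, $\gamma\in{\Bbb D}\setminus\{0\},\upsilon\in\langle{\Bcc v}\rangle,\gamma\upsilon\notin{\bf M}(I)$, and right factors $\rho\in{\Cal B}\subset\langle{\Bcc V}\rangle$; via Weispfenning multiplication $\diamond$ this is exactly the right-module syzygy problem carried by $\hat{\Frak S}_2$, for which the M\"oller--Pritchard Theorem~\ref{50BiLiTh} applies. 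So it suffices to exhibit the listed $B(f,g),B(f,h)$ as a $\Gamma^{(m)}$-homogeneous generating set of $\ker({\Frak s}_W)$ transported along Spear.

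First I would record the two structural simplifications peculiar to the restricted situation. (i) The leading terms involved have the shape $\upsilon\omega{\bf e}_\iota$, $\upsilon\in\langle{\Bcc v}\rangle,\omega\in\langle{\Bcc V}\rangle$; as $\langle{\Bcc v}\rangle$ and $\langle{\Bcc V}\rangle$ are free --- hence left- and right-cancellative --- an equality $\lambda_1\upsilon_1\omega_1\rho_1=\lambda_2\upsilon_2\omega_2\rho_2$ of completed terms splits as $\lambda_1\upsilon_1=\lambda_2\upsilon_2$ and $\omega_1\rho_1=\omega_2\rho_2$, whence one of $\upsilon_1,\upsilon_2$ right-divides the other, one of $\omega_1,\omega_2$ left-divides the other, and in each case the multiplier is \emph{uniquely} determined. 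Hence an ordered pair of leading terms admits at most one match, parametrised by the $2\times2$ divisibility alternatives, forcing $\iota_1=\iota_2$ for two generators of $F$ and a match in component $\iota_1$ for a generator of $F$ against one of $H$. (ii) As ${\Bbb D}$ is a PID, the ${\Bbb D}$-module of coefficient relations attached to each such match is principal, generated by the relation with the $\lcm(\gamma_1,\gamma_2)$-multipliers; the side conditions $\frac{\lcm(\gamma_1,\gamma_2)}{\gamma_i}\lambda\notin{\bf M}(I)$ keep the completed term a legitimate Zacharias representative, so that $B(\cdot,\cdot)$ lies in the module under study and its $w$-value is the one read off from the match.

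Next I would invoke M\"oller's lifting theorem over ${\Bbb D}\langle{\Bcc Z}\rangle$ \cite{M},\cite[IV.47]{SPES}: the relevant leading-monomial syzygy module is generated by the pairwise match-syzygies of (i)--(ii), no overlap or chain syzygies surviving precisely because matches are unique. Running through the two possibilities for a pair --- both generators in $F$, where the symmetry $g_1\leftrightarrow g_2$ lets one assume $\omega_{g_1}\mid_L\omega_{g_2}$ and so collapses the four alternatives to the families B.1 and B.3 (the remaining two being obtained by transposition), and one generator in $F$ against one in $H$, where $H$-elements bear no component index and all four alternatives survive as A.1--A.4 --- produces exactly the list in the statement. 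Finally, each $B(f,g)$ is by construction a difference of two terms of equal $w$-value, hence $\Gamma^{(m)}$-homogeneous, and applying $\hat{\Frak S}_2$ cancels the leading monomials by the choice of $\lcm$-multipliers; thus the exhibited set is a restricted Gebauer--M\"oller set, cf.\ \cite[Corollary~14]{Bath}.

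The step I expect to be the real obstacle is the completeness assertion behind the previous paragraph: one must check that, after the restricted reformulation, the (in general quite intricate) syzygy structure of a free monoid ring truly degenerates to \emph{pairwise, unique} matches, so classical M\"oller/Zacharias theory yields nothing beyond the listed families --- and that the coefficient side conditions $\frac{\lcm(\gamma_1,\gamma_2)}{\gamma_i}\lambda\notin{\bf M}(I)$ are pitched so that the list is neither redundant nor deficient. This is especially delicate since the coefficient ring $R={\Cal R}/I$ is not a domain, so one relies on the enlarged family $F\cup\pi(H)^{(m)}$ and the Szekeres-type conditions to absorb the relations $I$; and one must verify that in the $F$--$F$ case the families B.1, B.3 plus transposition exhaust the four divisibility patterns, while in the $F$--$H$ case the families A.1--A.4 are mutually exclusive and jointly exhaustive --- the bookkeeping that \cite[Corollary~14]{Bath} and \cite[IV.47--IV.50]{SPES} are designed to carry.
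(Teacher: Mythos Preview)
Your proposal is correct and follows essentially the same approach as the paper: the paper gives no explicit proof of this Corollary, treating it as an immediate consequence of the restricted Spear Lemma together with the observation, recorded just before the Corollary, that among two restricted terms $\upsilon_1\omega_1,\upsilon_2\omega_2$ there is at most a single match governed by the four divisibility alternatives and the $\lcm$ over the PID ${\Bbb D}$. Your write-up simply makes explicit the reduction to classical M\"oller theory over ${\Cal Q}/I$ and the case-by-case bookkeeping (A.1--A.4, B.1, B.3) that the paper leaves to the reader, deferring --- as the paper itself does --- the finer completeness verification to \cite[Corollary~14]{Bath} and \cite[IV.47--IV.50]{SPES}.
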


\section{Strong restricted Gr\"obner basis}

According Zacharias approach \cite{Z}, modules in ${\Cal A}$ have (left/right/bilate\-ral/restricted) strong Gr\"obner bases if and only if $R$ is a (left/right/bilateral/restricted) strong ring \cite{Zac}, {\em id est}  each  (left/right/bilateral/restricted) ideal $I\subset R$ has a strong basis.

Thus, under this assumption, from a restricted Gr\"obner basis $F\subset {\Cal A}^m$
of the restricted module 
${\Bbb I}_W(F)$, we can obtain a strong restricted Gr\"obner basis of ${\Bbb I}_W(F)$, as follows.

For each $g\in F$, let us denote  
\begin{itemize}
\renewcommand\labelitemi{\bf --}
\item $H_g:=\{h\in F\cup H^{(m)} : \omega_h\mid_L\omega_g\}$,
\item for each $h\in H_g, t_{hg}\in\langle{\Bcc V}\rangle : \omega_ht_{hg}=\omega_g$,
\item ${\sf J}_g :={\Bbb I}_L(\lc(h) : h\in H_g)\subset R$,
\item $\{d_j, j\in J\}, d_j=\sum_{h\in H_g} \gamma_{jh}\lc(h),$  a strong left basis of ${\sf J}_g$,
\item $S_g :=\{\sum_{h\in H_g} \Pi(\gamma_{jh}t_{hg})\diamond h,  j\in J\}$.
\end{itemize}

\begin{Corollary} $\cup_{g\in F}S_g$ is a strong restricted Gr\"obner representation in terms of $F$.
\end{Corollary}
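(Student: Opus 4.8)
The plan is to verify directly the two defining properties, from the Definition of this section, of a \emph{strong restricted Gr\"obner basis}: that $\bigcup_{g\in F}S_g\subset{\Bbb I}_W(F)$ with every element admitting a restricted Gr\"obner representation in terms of $F$, and that for each $f\in{\Bbb I}_W(F)\setminus\{0\}$ there is a \emph{single} $s\in\bigcup_{g\in F}S_g$ with ${\bf M}(s)\mid_W{\bf M}(f)$; generation of the whole restricted module then follows automatically from the second property by the usual reduction argument. Conceptually the construction is the restricted analogue of Zacharias' conversion of a weak Gr\"obner basis into a strong one (compare \cite[II]{SPES}), with $\omega_g$ playing the r\^ole of the leading $\langle{\Bcc V}\rangle$-word, Weispfenning multiplication $\diamond$ replacing plain right multiplication, and the extra elements $h\in H^{(m)}$ -- which, being relations, vanish under $\Phi$ but contribute their leading coefficients to ${\sf J}_g$ -- accounting, via Spear's Lemma, for the Szekeres ideals ${\Cal I}_\omega$ produced by the passage ${\Cal Q}/I\to{\Cal A}$.

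For the containment, fix $g\in F$ and $j\in J$ and consider $s_{jg}:=\sum_{h\in H_g}\Pi(\gamma_{jh}t_{hg})\diamond h$. Since $\omega_ht_{hg}=\omega_g\in{\Cal B}$ and ${\Cal B}$ is an order module, each $t_{hg}$ lies in ${\Cal B}$; thus the summands with $h\in F$ are generators of ${\Bbb I}_W(F)$, while those with $h\in H^{(m)}$ map to $0$ under $\Phi$ (they are $\diamond$-multiples of elements of $\ker\Phi$). Hence $\Phi(s_{jg})=\sum_{h\in F\cap H_g}\gamma_{jh}t_{hg}\diamond h\in{\Bbb I}_W(F)$, and by Theorem~\ref{50cP1res} it has a restricted Gr\"obner representation in terms of $F$. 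Moreover its maximal monomial is $d_j\,\omega_g{\bf e}_{\iota_g}$: the $\langle{\Bcc V}\rangle$-words $\omega_h\circ t_{hg}$ all collapse to $\omega_g$, the hypothesis $\gamma_{jh}\upsilon_h\notin{\bf M}(I)$ forbids any collapse of terms inside the coefficients, and in $R_{\omega_g}={\Cal R}/{\Cal I}_{\omega_g}$ the leading coefficients add up to $\sum_{h\in H_g}\gamma_{jh}\lc(h)=d_j$, the contributions of the $h\in H$ being $\equiv 0\bmod{\Cal I}_{\omega_g}$.

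For the strong divisibility, let $f\in{\Bbb I}_W(F)\setminus\{0\}$ with ${\bf T}(f)=\omega{\bf e}_\iota$. Since $F$ is a restricted Gr\"obner basis, Theorem~\ref{50cP1res} gives ${\bf M}(f)=\sum_i a_i{\bf M}(g_i)\ast\rho_i$ with $g_i\in F$, $\rho_i\in{\Cal B}$, $a_i\in R\setminus\{0\}$ and $\omega_{g_i}\rho_i{\bf e}_{\iota_{g_i}}={\bf T}(f)$; in particular every $\omega_{g_i}\mid_L\omega$ and $\iota_{g_i}=\iota$, so by left cancellativity of $\langle{\Bcc V}\rangle$ the prefixes $\{\omega_{g_i}\}$ are linearly ordered and one of them, $\omega_{g^\ast}$ with $g^\ast\in F$, divides all the others. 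Reading off coefficients in $R_\omega$ yields $\lc(f)=\sum_i a_i\lc(g_i)$ there, so $\lc(f)$ lies in the image in $R_\omega$ of ${\Bbb I}_L(\lc(g_i):i)\subseteq{\sf J}_{g^\ast}$; since $R$ is a left strong ring and $\{d_j:j\in J\}$ is a strong basis of ${\sf J}_{g^\ast}$, there is a single $d_{j_0}$ whose leading monomial divides that of $\lc(f)$. By the previous paragraph ${\bf M}(s_{j_0 g^\ast})=d_{j_0}\omega_{g^\ast}{\bf e}_\iota$ with $\omega_{g^\ast}\mid_L\omega$, whence ${\bf M}(s_{j_0 g^\ast})\mid_W{\bf M}(f)$, as required.

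I expect the main obstacle to be the coefficient-level bookkeeping in the last step: making precise that ${\Cal I}_\omega$ is generated as a left ideal by $I$ together with $\{\lc(h):h\in H,\ \omega_h\mid_L\omega\}$, that the canonical representative $\lc(f)$ of $\sum_i a_i\lc(g_i)$ modulo ${\Cal I}_\omega$ remains divisible in $R_\omega$ by one of the $d_j$ attached to a correctly chosen $g^\ast\in F$ -- which is precisely why $H_g$ must range over $F\cup H^{(m)}$ and why Spear's Lemma is invoked -- and that the left-strong-ring hypothesis on $R$ descends to the quotients $R_\omega$. Once this is in place the rest is the routine induction on ${\bf T}$, using the well-orderedness of ${\Cal B}^{(m)}$ by $<$ exactly as in Theorems~\ref{50cP1res} and~\ref{50BiLiTh}.
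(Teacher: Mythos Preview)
The paper states this corollary without proof, so there is nothing to compare against directly; your Zacharias-style approach is the intended one and is on the right track, but the divisibility argument contains a genuine slip.

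In the strong-divisibility step you write that the prefixes $\{\omega_{g_i}\}$ are linearly ordered and pick $g^\ast$ so that ``$\omega_{g^\ast}$ divides all the others''. That makes $\omega_{g^\ast}$ the \emph{shortest} prefix. But the inclusion you then use, ${\Bbb I}_L(\lc(g_i):i)\subseteq{\sf J}_{g^\ast}$, requires each $g_i\in H_{g^\ast}=\{h:\omega_h\mid_L\omega_{g^\ast}\}$, i.e.\ $\omega_{g_i}\mid_L\omega_{g^\ast}$ for every $i$: you need the \emph{longest} prefix, not the shortest. With the stated choice the inclusion fails unless all $\omega_{g_i}$ coincide.

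Fixing the direction is necessary but not yet sufficient. The equality $\lc(f)=\sum_i a_i\lc(g_i)$ you read off holds only in $R_\omega={\Cal R}/{\Cal I}_\omega$, so in $R$ the difference lies in ${\Cal I}_\omega$. To conclude $\lc(f)\in{\sf J}_{g^\ast}$ you therefore need ${\Cal I}_\omega\subseteq{\sf J}_{g^\ast}$; but ${\Cal I}_\omega$ receives contributions from every $h\in H^{(m)}$ with $\omega_h\mid_L\omega$, including those with $\omega_{g^\ast}<_L\omega_h\leq_L\omega$, and such $h$ are \emph{not} in $H_{g^\ast}$. The clean way around this is to choose $g^\ast$ with $\omega_{g^\ast}$ maximal among \emph{all} $g\in F$ with $\iota_g=\iota$ and $\omega_g\mid_L\omega$, and to work from the start in $({\Cal Q}/I)^m$ via Spear's Lemma so that the weak representation of ${\bf M}(f)$ already runs over $F\cup\pi(H)^{(m)}$ and the coefficient identity holds in $R$ rather than $R_\omega$; then every $h_k$ appearing (whether in $F$ or in $H^{(m)}$) with $\omega_{h_k}\leq_L\omega_{g^\ast}$ lies in $H_{g^\ast}$, and the residual $H$-contributions with longer $\omega_{h_k}$ land in ${\Cal I}_\omega$ and hence can be absorbed since $\lc(f)\not\equiv0$ there. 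This is exactly the bookkeeping you flagged as the main obstacle, but it needs to be carried out, not just signposted, for the proof to close.
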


\section*{Acknowledgements}
The senior author was partially supported by GNSAGA (INdAM, Italy).



\end{document}